\begin{document}


\newtheorem{thm}{Theorem}[section]
\newtheorem{lem}[thm]{Lemma}
\newtheorem{cor}[thm]{Corollary}
\newtheorem{pro}[thm]{Proposition}
\theoremstyle{definition}
\newtheorem{defi}[thm]{Definition}
\newtheorem{ex}[thm]{Example}
\newtheorem{rmk}[thm]{Remark}
\newtheorem{pdef}[thm]{Proposition-Definition}
\newtheorem{condition}[thm]{Condition}

\renewcommand{\labelenumi}{{\rm(\alph{enumi})}}
\renewcommand{\theenumi}{\alph{enumi}}

\baselineskip=14pt

\newcommand {\emptycomment}[1]{} 

\newcommand{\nc}{\newcommand}
\newcommand{\delete}[1]{}

\nc{\todo}[1]{\tred{To do:} #1}

\nc{\tred}[1]{\textcolor{red}{#1}}
\nc{\tblue}[1]{\textcolor{blue}{#1}}
\nc{\tgreen}[1]{\textcolor{green}{#1}}
\nc{\tpurple}[1]{\textcolor{purple}{#1}}
\nc{\tgray}[1]{\textcolor{gray}{#1}}
\nc{\torg}[1]{\textcolor{orange}{#1}}
\nc{\tmag}[1]{\textcolor{magenta}}
\nc{\btred}[1]{\textcolor{red}{\bf #1}}
\nc{\btblue}[1]{\textcolor{blue}{\bf #1}}
\nc{\btgreen}[1]{\textcolor{green}{\bf #1}}
\nc{\btpurple}[1]{\textcolor{purple}{\bf #1}}

	\nc{\mlabel}[1]{\label{#1}}  
	\nc{\mcite}[1]{\cite{#1}}  
	\nc{\mref}[1]{\ref{#1}}  
	\nc{\meqref}[1]{\eqref{#1}}  
	\nc{\mbibitem}[1]{\bibitem{#1}} 

\delete{
	\nc{\mlabel}[1]{\label{#1}  
		{ {\small\tgreen{\tt{{\ }(#1)}}}}}
	\nc{\mcite}[1]{\cite{#1}{\small{\tt{{\ }(#1)}}}}  
	\nc{\mref}[1]{\ref{#1}{\small{\tred{\tt{{\ }(#1)}}}}}  
	\nc{\meqref}[1]{\eqref{#1}{{\tt{{\ }(#1)}}}}  
	\nc{\mbibitem}[1]{\bibitem[\bf #1]{#1}} 
}

\nc{\zy}[1]{\textcolor{red}{Zhongyin:#1}}
\nc{\yy}[1]{\textcolor{blue}{Yanyong: #1}}
\nc{\li}[1]{\textcolor{purple}{#1}}
\nc{\lir}[1]{\textcolor{purple}{Li:#1}}


\nc{\tforall}{\ \ \text{for all }}
\nc{\hatot}{\,\widehat{\otimes} \,}
\nc{\complete}{completed\xspace}
\nc{\wdhat}[1]{\widehat{#1}}

\nc{\ts}{\mathfrak{p}}
\nc{\mts}{c_{(i)}\ot d_{(j)}}

\nc{\NA}{{\bf NA}}
\nc{\LA}{{\bf Lie}}
\nc{\CLA}{{\bf CLA}}

\nc{\cybe}{CYBE\xspace}
\nc{\nybe}{NYBE\xspace}
\nc{\ccybe}{CCYBE\xspace}

\nc{\ndend}{pre-Novikov\xspace}
\nc{\calb}{\mathcal{B}}
\nc{\rk}{\mathrm{r}}
\newcommand{\g}{\mathfrak g}
\newcommand{\h}{\mathfrak h}
\newcommand{\pf}{\noindent{$Proof$.}\ }
\newcommand{\frkg}{\mathfrak g}
\newcommand{\frkh}{\mathfrak h}
\newcommand{\Id}{\rm{Id}}
\newcommand{\gl}{\mathfrak {gl}}
\newcommand{\ad}{\mathrm{ad}}
\newcommand{\add}{\frka\frkd}
\newcommand{\frka}{\mathfrak a}
\newcommand{\frkb}{\mathfrak b}
\newcommand{\frkc}{\mathfrak c}
\newcommand{\frkd}{\mathfrak d}
\newcommand {\comment}[1]{{\marginpar{*}\scriptsize\textbf{Comments:} #1}}


\nc{\disp}[1]{\displaystyle{#1}}
\nc{\bin}[2]{ (_{\stackrel{\scs{#1}}{\scs{#2}}})}  
\nc{\binc}[2]{ \left (\!\! \begin{array}{c} \scs{#1}\\
    \scs{#2} \end{array}\!\! \right )}  
\nc{\bincc}[2]{  \left ( {\scs{#1} \atop
    \vspace{-.5cm}\scs{#2}} \right )}  
\nc{\ot}{\otimes}
\nc{\sot}{{\scriptstyle{\ot}}}
\nc{\otm}{\overline{\ot}}
\nc{\ola}[1]{\stackrel{#1}{\la}}

\nc{\scs}[1]{\scriptstyle{#1}} \nc{\mrm}[1]{{\rm #1}}

\nc{\dirlim}{\displaystyle{\lim_{\longrightarrow}}\,}
\nc{\invlim}{\displaystyle{\lim_{\longleftarrow}}\,}

\nc{\bfk}{{\bf k}} \nc{\bfone}{{\bf 1}}
\nc{\rpr}{\circ}
\nc{\dpr}{{\tiny\diamond}}
\nc{\rprpm}{{\rpr}}

\nc{\mmbox}[1]{\mbox{\ #1\ }} \nc{\ann}{\mrm{ann}}
\nc{\Aut}{\mrm{Aut}} \nc{\can}{\mrm{can}}
\nc{\twoalg}{{two-sided algebra}\xspace}
\nc{\colim}{\mrm{colim}}
\nc{\Cont}{\mrm{Cont}} \nc{\rchar}{\mrm{char}}
\nc{\cok}{\mrm{coker}} \nc{\dtf}{{R-{\rm tf}}} \nc{\dtor}{{R-{\rm
tor}}}
\renewcommand{\det}{\mrm{det}}
\nc{\depth}{{\mrm d}}
\nc{\End}{\mrm{End}} \nc{\Ext}{\mrm{Ext}}
\nc{\Fil}{\mrm{Fil}} \nc{\Frob}{\mrm{Frob}} \nc{\Gal}{\mrm{Gal}}
\nc{\GL}{\mrm{GL}} \nc{\Hom}{\mrm{Hom}} \nc{\hsr}{\mrm{H}}
\nc{\hpol}{\mrm{HP}}  \nc{\id}{\mrm{id}} \nc{\im}{\mrm{im}}

\nc{\incl}{\mrm{incl}} \nc{\length}{\mrm{length}}
\nc{\LR}{\mrm{LR}} \nc{\mchar}{\rm char} \nc{\NC}{\mrm{NC}}
\nc{\mpart}{\mrm{part}} \nc{\pl}{\mrm{PL}}
\nc{\ql}{{\QQ_\ell}} \nc{\qp}{{\QQ_p}}
\nc{\rank}{\mrm{rank}} \nc{\rba}{\rm{RBA }} \nc{\rbas}{\rm{RBAs }}
\nc{\rbpl}{\mrm{RBPL}}
\nc{\rbw}{\rm{RBW }} \nc{\rbws}{\rm{RBWs }} \nc{\rcot}{\mrm{cot}}
\nc{\rest}{\rm{controlled}\xspace}
\nc{\rdef}{\mrm{def}} \nc{\rdiv}{{\rm div}} \nc{\rtf}{{\rm tf}}
\nc{\rtor}{{\rm tor}} \nc{\res}{\mrm{res}} \nc{\SL}{\mrm{SL}}
\nc{\Spec}{\mrm{Spec}} \nc{\tor}{\mrm{tor}} \nc{\Tr}{\mrm{Tr}}
\nc{\mtr}{\mrm{sk}}

\nc{\ab}{\mathbf{Ab}} \nc{\Alg}{\mathbf{Alg}}

\nc{\BA}{{\mathbb A}} \nc{\CC}{{\mathbb C}} \nc{\DD}{{\mathbb D}}
\nc{\EE}{{\mathbb E}} \nc{\FF}{{\mathbb F}} \nc{\GG}{{\mathbb G}}
\nc{\HH}{{\mathbb H}} \nc{\LL}{{\mathbb L}} \nc{\NN}{{\mathbb N}}
\nc{\QQ}{{\mathbb Q}} \nc{\RR}{{\mathbb R}} \nc{\BS}{{\mathbb{S}}} \nc{\TT}{{\mathbb T}}
\nc{\VV}{{\mathbb V}} \nc{\ZZ}{{\mathbb Z}}


\nc{\calao}{{\mathcal A}} \nc{\cala}{{\mathcal A}}
\nc{\calc}{{\mathcal C}} \nc{\cald}{{\mathcal D}}
\nc{\cale}{{\mathcal E}} \nc{\calf}{{\mathcal F}}
\nc{\calfr}{{{\mathcal F}^{\,r}}} \nc{\calfo}{{\mathcal F}^0}
\nc{\calfro}{{\mathcal F}^{\,r,0}} \nc{\oF}{\overline{F}}
\nc{\calg}{{\mathcal G}} \nc{\calh}{{\mathcal H}}
\nc{\cali}{{\mathcal I}} \nc{\calj}{{\mathcal J}}
\nc{\call}{{\mathcal L}} \nc{\calm}{{\mathcal M}}
\nc{\caln}{{\mathcal N}} \nc{\calo}{{\mathcal O}}
\nc{\calp}{{\mathcal P}} \nc{\calq}{{\mathcal Q}} \nc{\calr}{{\mathcal R}}
\nc{\calt}{{\mathcal T}} \nc{\caltr}{{\mathcal T}^{\,r}}
\nc{\calu}{{\mathcal U}} \nc{\calv}{{\mathcal V}}
\nc{\calw}{{\mathcal W}} \nc{\calx}{{\mathcal X}}
\nc{\CA}{\mathcal{A}}

\nc{\fraka}{{\mathfrak a}} \nc{\frakB}{{\mathfrak B}}
\nc{\frakb}{{\mathfrak b}} \nc{\frakd}{{\mathfrak d}}
\nc{\oD}{\overline{D}}
\nc{\frakF}{{\mathfrak F}} \nc{\frakg}{{\mathfrak g}}
\nc{\frakm}{{\mathfrak m}} \nc{\frakM}{{\mathfrak M}}
\nc{\frakMo}{{\mathfrak M}^0} \nc{\frakp}{{\mathfrak p}}
\nc{\frakS}{{\mathfrak S}} \nc{\frakSo}{{\mathfrak S}^0}
\nc{\fraks}{{\mathfrak s}} \nc{\os}{\overline{\fraks}}
\nc{\frakT}{{\mathfrak T}}
\nc{\oT}{\overline{T}}
\nc{\frakX}{{\mathfrak X}} \nc{\frakXo}{{\mathfrak X}^0}
\nc{\frakx}{{\mathbf x}}
\nc{\frakTx}{\frakT}      
\nc{\frakTa}{\frakT^a}        
\nc{\frakTxo}{\frakTx^0}   
\nc{\caltao}{\calt^{a,0}}   
\nc{\ox}{\overline{\frakx}} \nc{\fraky}{{\mathfrak y}}
\nc{\frakz}{{\mathfrak z}} \nc{\oX}{\overline{X}}

\font\cyr=wncyr10


\title{One-dimensional central extensions and simplicities of a class of left-symmetric conformal algebras}

\author{Zhongyin Xu}
\address{School of Mathematics, Hangzhou Normal University,
Hangzhou, 311121, China}
\email{Xzy@stu.hznu.edu.cn}

\author{Yanyong Hong}
\address{School of Mathematics, Hangzhou Normal University,
Hangzhou, 311121, China}
\email{yyhong@hznu.edu.cn}

\subjclass[2020]{17A30, 17D25, 17B60}
\keywords{Left-symmetric conformal algebra, pre-Novikov algebra, pre-Gel'fand-Dorfman algebra, Novikov-Poisson algebra, central extension}

\begin{abstract}
In this paper, we introduce the definition of pre-Gel'fand-Dorfman algebra and present several constructions. Moreover, we show that a class of left-symmetric conformal algebras named quadratic left-symmetric conformal algebras are one to one correspondence with pre-Gel'fand-Dorfman algebras.
Then we investigate the simplicities and central extensions of quadratic left-symmetric conformal algebras by a one-dimensional centre from the point of view of pre-Gel'fand-Dorfman algebras. We show that under some conditions, central extensions of quadratic left-symmetric conformal algebras by a one-dimensional centre can be characterized by four bilinear forms on pre-Gel'fand-Dorfman algebras. Several methods to construct simple quadratic left-symmetric conformal algebras from pre-Gel'fand-Dorfman algebras are also given.
\end{abstract}
\footnote{Yanyong Hong is the corresponding author.}
\maketitle

\section{Introduction}
The notion of Lie conformal algebra was introduced V. Kac in \cite{K1} to give an axiomatic description of singular part of the operator product expansion of chiral fields in conformal field theory. Lie conformal algebras have close connections with vertex algebras \cite{K1},  infinite-dimensional Lie algebras satisfying the locality property \cite{K} and Hamiltonian formalism in the theory of nonlinear evolution equations \cite{BDK}. Structure theory and representation theory of finite Lie conformal algebras have been well developed (see \cite{DK, BKV, CK}).

Based on an equivalent characterization of vertex algebra by the notions of Lie conformal algebra and left-symmetric algebra given in \cite{BK}, the definition of left-symmetric conformal algebra was introduced in \cite{HL} to investigate whether there exist compatible left-symmetric algebra structures on formal distribution Lie algebras.  Notice that left-symmetric conformal algebras are a class of special left-symmetric pseudo-algebras  introduced in \cite{Wu}.
Similar to the classical case, the conformal commutator of a left-symmetric conformal algebra is a Lie conformal algebra.
Moreover, finite left-symmetric conformal algebras which are free $\mathbb{C}[\partial]$-modules can naturally  provide the solutions of conformal Yang-Baxter equation and conformal $S$-equation  \cite{HB}. There have been some works on left-symmetric conformal algebras. For example, the theory of left-symmetric conformal bialgebras was given in \cite{HL1}, the general cohomology theory was presented in \cite{ZH} and compatible left-symmetric conformal algebra structures on the Lie conformal algebra $W(a,b)$ were investigated in \cite{LHZZ, WH}.

Obviously, the theory of finite left-symmetric conformal algebras is far from being developed, for example, there is no complete classification of finite simple left-symmetric conformal algebras up to now. As \cite{DK} showed, a finite simple Lie conformal algebra is either of rank $1$  or isomorphic to a current Lie conformal algebra associated with a finite-dimensional simple Lie algebra. However, different from the classification of Lie conformal algebras, there are simple left-symmetric conformal algebras of rank $2$ which are not current (see Example \ref{rank 2}). Therefore, it seems hard to give a complete classification of finite simple left-symmetric conformal algebras. Note that similar to quadratic Lie conformal algebras \cite{X1}, the definition of quadratic left-symmetric conformal algebra was given in \cite{HL}. It was shown in \cite{HL} that a quadratic left-symmetric conformal algebra $R=\mathbb{C}[\partial]V$ is equivalent to a quadruple $(V, \ast_1, \circ, \ast_2)$, where $(V, \circ)$ is a left-symmetric algebra, and $\ast_1$, $\ast_2$, $\circ$ satisfy $9$ identities. Motivated by the study of simplicities of quadratic Lie conformal algebras in \cite{HW}, a natural idea is to investigate the simplicities of quadratic left-symmetric conformal algebras, from which we can construct and provide many finite simple left-symmetric conformal algebras.

On the other hand, the study of central extensions is also very important in the classification of finite left-symmetric conformal algebras. Since $\mathbb{C}[\partial]$ is a principal ideal domain, as the case of Lie conformal algebras in \cite{K1}, the characterization of finite left-symmetric conformal algebras can be attributed to the following problems:
\begin{itemize}
\item Classify finite left-symmetric conformal algebras which are free as $\mathbb{C}[\partial]$-modules.
\item Characterize central extensions of the obtained finite left-symmetric conformal algebras with the centre in the torsion.
\end{itemize}
Motivated by the study of central extensions of quadratic Lie conformal algebras in \cite{H}, it is natural and meaningful to investigate the central extensions of quadratic left-symmetric conformal algebras.

In the study of simplicity and central extensions of quadratic left-symmetric conformal algebras, there is a problem that we should deal with first, i.e. the algebra structure $(V, \ast_1, \circ, \ast_2)$ is too complicated to investigate. Motivated by the definition of pre-Novikov algebra given in \cite{HBG}, which is equivalent to a special class of quadratic left-symmetric conformal algebras (see \cite{HBG1}), we introduce the definition of pre-Gel'fand-Dorfman algebra and show that a quadratic left-symmetric conformal algebra $R=\mathbb{C}[\partial]V$ is equivalent to a pre-Gel'fand-Dorfman algebra $(V, \lhd, \rhd, \circ)$. Note that left-symmetric Poisson algebras and Novikov-Poisson algebras are pre-Gel'fand-Dorfman algebras. Based on this correspondence, we investigate the simplicities and central extensions of quadratic left-symmetric conformal algebras by a one-dimensional centre $\mathbb{C}\mathfrak{c}_\beta$, where $\partial \mathfrak{c}_\beta=\beta \mathfrak{c}_\beta$, $\beta\in \mathbb{C}$. Some necessary conditions and sufficient conditions for a quadratic left-symmetric conformal algebra to be simple are presented. In particular, we show that if a Novikov-Poisson algebra is simple, then the corresponding quadratic left-symmetric conformal algebra is simple. This can be used to construct many simple left-symmetric conformal algebras. In addition, we show that the central extensions of quadratic left-symmetric conformal algebras  by a one-dimensional centre $\mathbb{C}\mathfrak{c}_\beta$ in many cases are determined by four bilinear forms on the corresponding pre-Gel'fand-Dorfman algebras. This will facilitate us to calculate the central extensions of quadratic left symmetric conformal algebras  by a one-dimensional centre $\mathbb{C}\mathfrak{c}_\beta$. Several examples are also presented.

This paper is organized as follows. In Section 2, some basic definitions about left-symmetric algebras, left-symmetric conformal algebras, quadratic left-symmetric conformal algebras and pre-Novikov algebras are recalled. We introduce the definition of pre-Gel'fand-Dorfman algebra and show that a quadratic left-symmetric conformal algebra is equivalent to a pre-Gel'fand-Dorfman algebra. Some constructions of pre-Gel'fand-Dorfman algebras are given. In Section 3,   we study the central extensions of quadratic left-symmetric conformal algebras by a one-dimensional centre $\mathbb{C}\mathfrak{c}_\beta$.
Section 4 is devoted to investigating the simplicities of quadratic left-symmetric conformal algebras. Some necessary conditions and sufficient conditions for a quadratic left-symmetric conformal algebra to be simple are presented. Some examples of simple left-symmetric conformal algebras are also given.

Throughout this paper, we denote by $\mathbb{C}$, $\mathbb{Z}$ and
$\mathbb{Z}_+$ the sets of complex numbers,
integers and nonnegative integers, respectively. All vector spaces and tensor products are taken over the complex field $\mathbb{C}$. For any vector space $V$,
we use $V[\lambda]$ to denote the set of polynomials of $\lambda$ with coefficients in $V$.

\section{A new equivalent characterization of quadratic left-symmetric conformal algebras}

In this section, we will recall some basic definitions and facts about quadratic left-symmetric conformal algebras \cite{HL} and give a new equivalent characterization of quadratic left-symmetric conformal algebras by introducing the definition of pre-Gel'fand-Dorfman algebra. Some constructions of  pre-Gel'fand-Dorfman algebras are also given.

We first recall the definitions of left-symmetric algebra and Novikov algebra.
\begin{defi}
 A {\bf left-symmetric algebra} $A$ is a vector space over $\mathbb{C}$ with a bilinear product ``$\circ $": $A\times A\rightarrow A$, which
 satisfies the following condition:
\begin{align}
(a\circ b)\circ c-a\circ(b\circ c)=(b\circ a)\circ c-b\circ(a\circ c),\label{lsalgebra}\;\;\;\text{for all $a$, $b$, $c\in A$.}
\end{align}
\delete{We denote it by $(A, \circ)$.}

If the product `` $\circ $ " also satisfies (for all $a,b,c\in A$):
\begin{align}
(a\circ b)\circ c=(a\circ c)\circ b,\label{nalgebra}
\end{align}
then $(A, \circ)$ is called a {\bf Novikov algebra}.
\end{defi}

\delete{\begin{rmk}
 The Novikov algebra was essentially stated in \cite{gel1979hamiltonian}. It corresponds to a certain Hamiltonian operator and also appeared in \cite{balinskii1985poisson} from
the point of view of Poisson structures of hydrodynamic type.
\end{rmk}}

Next, let us recall the definition of left-symmetric conformal algebra.

\begin{defi} \cite{HL}
A {\bf left-symmetric conformal algebra} $R$ is a $\mathbb{C}\left[\partial\right]$-module with a $\lambda$-product $\cdot_\lambda\cdot$
which defines a $\mathbb{C}$-bilinear map from $R\times R \rightarrow R\left[\lambda\right]$, satisfying
\begin{eqnarray*}
&&\partial a_\lambda b=-\lambda a_\lambda b,~~~~~~~~a_\lambda\partial b=(\partial+\lambda)a_\lambda b,\;\;\;\text{(conformal sesquilinearity)}\\
&&(a_\lambda b)_{\lambda+\mu}c-a_\lambda(b_\mu c)=(b_\mu a)_{\lambda+\mu}c-b_\mu(a_\lambda c),\;\;\;\text{(left-symmetry)}
\end{eqnarray*}
for all $a,b,c\in R$. We denote it by $(R, \cdot_\lambda \cdot)$.

\end{defi}

A left-symmetric conformal algebra  is said to be {\bf finite}, if it is finitely generated as a $\mathbb{C}\left[\partial\right]$-module. Otherwise, we call
it {\bf infinite}. A $\mathbb{C}[\partial]$-submodule $I$ of a left-symmetric conformal algebra $R$ is called an {\bf ideal} if $I_\lambda R\subseteq I[\lambda]$ and $R_\lambda I\subseteq I[\lambda]$. A left-symmetric conformal algebra $R$ is called {\bf simple} if $R$ is non-trivial and $R$ has no proper ideals.

Let $(R, \cdot_\lambda \cdot)$ be a left-symmetric conformal algebra. Set $a_\lambda b=\sum_{n=0}^\infty \frac{\lambda^n}{n!} a_{(n)}b$ for any $a$, $b\in R$, where $a_{(n)}b\in R$. Let $\text{Coeff}(R)$ be the quotient of the vector space with basis $a_n$ $(a\in R,
n\in \mathbb{Z})$ by the subspace spanned over $\mathbb{C}$ by elements:
\begin{eqnarray*}
(\alpha a)_n-\alpha a_n,\;\;(a+b)_n-a_n-b_n,\;\;(\partial a)_n+na_{n-1},\;\;\text{where $a$, $b\in R$, $\alpha\in \mathbb{C}$, $n\in \ZZ$.}
\end{eqnarray*}
Define an operation on $\text{Coeff}(R)$ as follows:
\begin{eqnarray}
a_m \circ b_n=\sum_{j\in \ZZ_+}\left(
                                 \begin{array}{c}
                                   m\\
                                   j \\
                                 \end{array}
                               \right)(a_{(j)}b)_{m+n-j}.
\end{eqnarray}
Then $(\text{Coeff} (R), \circ)$ is a left-symmetric algebra (see \cite{HL}).

\begin{ex}\label{curl}
Let $(A,\circ) $ be a left-symmetric algebra. Then we can naturally define a left-symmetric conformal algebra $\text{Cur} L=\mathbb{C}[\partial]\otimes L$ with
the $\lambda$-product:
$$
a_\lambda b=a\circ b,\;\;\;\text{for all $a,b\in L$.}
$$
$\text{Cur} L$ is called the {\bf current left-symmetric conformal algebra} associated with $L$.
\end{ex}

\begin{ex}\cite{HL}\label{rank one}
Let $R=\mathbb{C}[\partial]L$ be a left-symmetric conformal algebra of rank one as a $\mathbb{C}[\partial]$-module with the $\lambda$-product:
$$L_\lambda L=(\partial+\lambda+c)L,\;\;\;\text{for some $c\in \mathbb{C}$.}
$$
We denote it by $ R_c$.
\end{ex}

Next, we will introduce a class of special left-symmetric conformal algebras.

\begin{defi}\cite{HL}
$(R, \cdot_\lambda \cdot)$ is called a {\bf quadratic left-symmetric conformal algebra} if there exists some vector space $V$ such that $R=\mathbb{C}[\partial]V $ and for any $u,v\in V$,
\begin{equation}
u_\lambda v=\partial w_1+w_2+\lambda w_3,\label{uv}
\end{equation}
where $w_1,w_2,w_3\in V$.
\end{defi}

\begin{pro}\label{ql}\cite[Theorem 3.7]{HL}
A quadratic left-symmetric conformal algebra $R=\mathbb{C}\left[\partial\right] V$ with the $\lambda$-product:
$$
a_\lambda b=\partial(a*_1 b)+a\circ b+\lambda(a*_2 b), ~~\text{for all}~~a, b\in V,
$$
is equivalent to the quadruple $(V, \ast_{1}, \circ, \ast_{2})$
where `` $\ast_{1} "$ and `` $\ast_{2} "$ are two operations on $V$, $(V, \circ)$ is a left-symmetric algebra, and they satisfy the following compatibility conditions:
for all $a,b,c\in V$,
\small{
\begin{eqnarray}
&&\label{oe1}a\ast_{1}(b \ast_{1} c)=b \ast_{1}(a \ast_{1} c),\\
&&\label{oe2}(a *_{1} b) *_{1} c-(a *_{2} b) *_{1} c+a *_{1}(b *_{1} c)+a *_{2}(b *_{1} c)=(b *_{1} a) *_{1} c+b *_{1}(a *_{2} c),\\
&& \label{oe3}(a\ast_1 b)\ast_1 c+a\ast_1 (b\ast_2 c)=(b\ast_1 a)\ast_1 c-(b\ast_2a)\ast_1 c+b\ast_1 (a\ast_1 c)+b\ast_2 (a\ast_1 c),\\
&&\label{oe4}(a *_{1} b) *_{2} c-(a *_{2} b) *_{2} c+a *_{2}(b *_{1} c)=(b *_1 a)*_2c,\\
&&\label{oe5}2(a *_{1} b) *_{2} c-(a *_{2} b) *_{2} c+a *_{2}(b *_{2} c)
=2(b *_{1} a) *_{2} c-(b *_{2} a) *_{2} c+b *_{2}(a *_{2} c),\\
&&\label{oe6}(a\ast_1 b)\ast_2 c=(b\ast_1 a)\ast_2 c-(b\ast_2 a)\ast_2 c+b\ast_2 (a\ast_1 c),\\
&&\label{oe7}(a \circ b) *_{1} c-a \circ(b *_{1} c)-a *_{1}(b \circ c)=(b \circ a) *_{1} c-b \circ(a *_{1} c)-b *_{1}(a \circ c),\\
&&\label{oe8}(a *_{1} b) \circ c-(a \circ b) *_{2} c-(a *_{2} b) \circ c+a \circ(b *_{1} c)+a *_{2}(b \circ c)\\
&&~~=(b *_{1} a) \circ c-(b \circ a) *_{2} c+b \circ(a *_{2} c),\nonumber\\
&& \label{oe9}(a\ast_1 b)\circ c-(a\circ b)\ast_2 c+a\circ (b\ast_2 c)\\
&&~~=(b\ast_1 a)\circ c-(b\circ a)\ast_2 c-(b\ast_2 a)\circ c+b\circ (a\ast_1 c)+b\ast_2 (a\circ c).\nonumber
\end{eqnarray}
}
\end{pro}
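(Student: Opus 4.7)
The strategy is a direct coefficient-matching argument. Since the $\lambda$-product is extended from $V$ to all of $R=\mathbb{C}[\partial]V$ by conformal sesquilinearity, the only conformal axiom that remains to be imposed is left-symmetry
\begin{equation*}
(a_\lambda b)_{\lambda+\mu}c - a_\lambda(b_\mu c) = (b_\mu a)_{\lambda+\mu}c - b_\mu(a_\lambda c), \qquad a,b,c \in V.
\end{equation*}
Both sides live in $R[\lambda,\mu] = \mathbb{C}[\partial]V[\lambda,\mu]$, so the plan is to expand each side using the prescribed form $u_\nu v = \partial(u*_1 v) + u\circ v + \nu(u*_2 v)$, reduce to a canonical polynomial in $\partial,\lambda,\mu$ by the sesquilinearity rules $(\partial x)_\nu y = -\nu \, x_\nu y$ and $x_\nu(\partial y) = (\partial+\nu)x_\nu y$, and then equate coefficients of each monomial on the two sides.

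Concretely, I would first compute $(a_\lambda b)_{\lambda+\mu}c$ by substituting the expression for $a_\lambda b$, then applying the outer $(\cdot)_{\lambda+\mu}c$ and once more using the formula; the sesquilinearity rule turns the $\partial(a*_1 b)$-part into a $-(\lambda+\mu)$ factor. Second, I would compute $a_\lambda(b_\mu c)$ by expanding $b_\mu c$, then pushing the $\partial(b*_1 c)$ across $a_\lambda$ via $a_\lambda\partial x=(\partial+\lambda)a_\lambda x$. After these substitutions, each side becomes a polynomial in $\partial,\lambda,\mu$ of total degree at most $2$, with coefficients that are compositions of $*_1,\circ,*_2$ applied to $a,b,c$. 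The right-hand side is obtained by the involution $(a,\lambda)\leftrightarrow(b,\mu)$.

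I would then equate coefficients monomial-by-monomial in the basis $\{\partial^2,\partial\lambda,\partial\mu,\lambda^2,\lambda\mu,\mu^2,\partial,\lambda,\mu,1\}$. The constant term reproduces the left-symmetric algebra identity \eqref{lsalgebra} for $(V,\circ)$. The coefficients of $\partial^2,\partial\lambda,\partial\mu$ (three pure-$*_1/*_2$ relations) yield \eqref{oe1}--\eqref{oe3}; the coefficients of $\lambda^2,\lambda\mu,\mu^2$ give the remaining pure-$*_1/*_2$ relations \eqref{oe4}--\eqref{oe6}; and the coefficients of $\partial,\lambda,\mu$ (where $\circ$ enters nontrivially against $*_1$ or $*_2$) yield the mixed relations \eqref{oe7}--\eqref{oe9}. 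Conversely, given the quadruple satisfying these nine identities plus left-symmetry of $\circ$, assembling them with the stated monomials reconstructs exactly the left-symmetry of $\cdot_\lambda\cdot$, proving the reverse implication.

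The main obstacle is purely combinatorial bookkeeping: organizing the roughly twenty expanded terms on each side so that the ten monomial coefficients come out matching \eqref{oe1}--\eqref{oe9} as written (up to obvious rearrangement). The subtlety is that a single monomial such as $\partial\lambda$ mixes contributions coming from $(a*_1 b)*_2 c$, $(a*_2 b)*_1 c$, $a*_1(b*_2 c)$, $a*_2(b*_1 c)$, $(a*_1 b)*_1 c$ (via $-(\lambda+\mu)\partial$), and similar $\partial$-shifted terms from $a_\lambda\partial x$; keeping the signs and which variable is swapped straight is where any error would creep in. Once that tabulation is done carefully, the nine compatibility conditions are forced exactly as stated.
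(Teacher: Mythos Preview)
Your approach is correct and is exactly the standard route for this kind of result. Note, however, that the paper does not supply its own proof of this proposition: it is quoted verbatim as \cite[Theorem~3.7]{HL} and used as input for the later reformulation in Theorem~\ref{lspn}. So there is no ``paper's proof'' to compare against; the original argument in \cite{HL} is precisely the coefficient-matching computation you describe.

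One small correction to your bookkeeping: the ten monomials $\{\partial^2,\partial\lambda,\partial\mu,\lambda^2,\lambda\mu,\mu^2,\partial,\lambda,\mu,1\}$ do not yield ten \emph{independent} conditions, because the associator difference $D(a,b,c;\lambda,\mu)$ you are setting to zero is antisymmetric under $(a,\lambda)\leftrightarrow(b,\mu)$. Thus the $\partial\lambda$- and $\partial\mu$-coefficients are interchanged (with $a,b$ swapped), as are the $\lambda^2/\mu^2$- and the $\lambda/\mu$-coefficients; the $\partial^2$-, $\lambda\mu$-, $\partial$-, and constant-coefficients are already antisymmetric in $(a,b)$. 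So the naive count overproduces; the nine identities \eqref{oe1}--\eqref{oe9} plus left-symmetry of $\circ$ are what survive after this redundancy is taken into account (and some of those nine are in fact equivalent under $a\leftrightarrow b$ to others, which is why Theorem~\ref{lspn} can compress them to the shorter pre-Gel'fand--Dorfman list). This does not affect the validity of your method, only the final tally.
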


 In order to study quadratic left-symmetric conformal algebra better, we recall the definition of pre-Novikov algebra.
\begin{defi}\label{ND} \cite{HBG}
Let $A$ be a vector space with binary operations `` $ \lhd$ " and `` $\rhd$ ". If for all $a$, $b$ and $c\in A$, they satisfy the following equalities
\begin{align}
&a\rhd(b\rhd c)=(a\rhd b+a\lhd b)\rhd c+b\rhd(a\rhd c)-(b\rhd a+b\lhd a)\rhd c,\label{ND1}\\
&a\rhd(b\lhd c)=(a\rhd b)\lhd c+b\lhd(a\lhd c+a\rhd c)-(b\lhd a)\lhd c,\label{ND2}\\
&(a\lhd b+a\rhd b)\rhd c=(a\rhd c)\lhd b,\label{ND3}\\
&(a\lhd b)\lhd c=(a\lhd c)\lhd b,\label{ND4}
\end{align}
then $(A,\lhd,\rhd)$ is called a \bf{pre-Novikov algebra}.
\end{defi}

Recall that
\begin{defi}\cite{O}
A {\bf representation} of a Novikov algebra $(A,\ast)$ is a triple $(M, l_A,r_A)$, where $M$ is a vector space and   $l_A$, $r_A: A\rightarrow {\rm
End}_{\mathbb{C}}(M)$ are linear maps satisfying
\begin{eqnarray}
\mlabel{lef-mod1}&l_A(a\ast b-b\ast a)v=l_A(a)l_A(b)v-l_A(b)l_A(a)v,&\\
\mlabel{lef-mod2}&l_A(a)r_A(b)v-r_A(b)l_A(a)v=r_A(a\ast b)v-r_A(b)r_A(a)v,&\\
\mlabel{Nov-mod1}&l_A(a\ast b)v=r_A(b)l_A(a)v,&\\
\mlabel{Nov-mod2}&r_A(a)r_A(b)v=r_A(b)r_A(a)v,&
\end{eqnarray}
for all $a$, $b\in A$ and $v\in M$.
\end{defi}

For a pre-Novikov algebra $(A, \lhd, \rhd)$, define linear maps $L_\rhd$, $R_\lhd: A\rightarrow \text{End}_{\mathbb{C}}(A)$ by
\begin{eqnarray*}
L_\rhd (a)(b):=a\rhd b,\;\;\;R_{\lhd}(a)(b):=b\lhd a, \;\; \; \text{for all $a$, $b\in A$.}
\end{eqnarray*}

\begin{pro}\cite[Proposition 3.31]{HBG}\label{pre-Nov-Nov}
Let $(A, \lhd, \rhd)$ be a pre-Novikov algebra. The binary operation
\begin{eqnarray}
\label{ND5}
\ast: A\otimes A\rightarrow A, \quad
a \ast  b\coloneqq a\lhd b+a\rhd b~~~~\;\;\tforall  a, b\in A,
\end{eqnarray}
defines a Novikov algebra, which is called the {\bf associated Novikov algebra} of $(A, \lhd, \rhd)$.
Moreover,  $(A, L_{\rhd}, R_{\lhd})$ is
a representation of $(A, \ast)$. Conversely, let $A$ be a vector space
with binary operations $\rhd$ and $\lhd$. If $(A, \ast)$ defined by Eq.~\meqref{ND5} is a Novikov
algebra and $(A, L_{\rhd}, R_{\lhd})$ is a representation of $(A,
\ast)$, then $(A, \lhd, \rhd)$ is a pre-Novikov algebra.
\end{pro}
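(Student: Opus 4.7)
The plan is a direct two-step verification. The key observation is that substituting $l_A = L_\rhd$ and $r_A = R_\lhd$ into the four representation axioms \meqref{lef-mod1}--\meqref{Nov-mod2} and evaluating on an element $c \in A$ turns each axiom (after relabeling) into one of \meqref{ND1}--\meqref{ND4}. Thus the representation part of the equivalence reduces to bookkeeping: \meqref{Nov-mod2} yields $(v\lhd b)\lhd a = (v\lhd a)\lhd b$, i.e.\ \meqref{ND4}; \meqref{Nov-mod1} yields $(a\lhd b + a\rhd b)\rhd v = (a\rhd v)\lhd b$, i.e.\ \meqref{ND3}; and \meqref{lef-mod1}, \meqref{lef-mod2} translate to \meqref{ND1}, \meqref{ND2} after moving the mixed-operator terms to one side and using $\ast = \lhd + \rhd$.

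The substantive content of the forward direction is then to derive the Novikov identities for $\ast$ from \meqref{ND1}--\meqref{ND4}. Right commutativity $(a\ast b)\ast c = (a\ast c)\ast b$ reduces, after applying \meqref{ND3} on both sides to rewrite each $(a\lhd x + a\rhd x)\rhd y$ as $(a\rhd y)\lhd x$, to the single identity $(a\lhd b)\lhd c = (a\lhd c)\lhd b$, which is \meqref{ND4}. Left-symmetry is the main calculation: expanding $(a\ast b)\ast c - a\ast(b\ast c) - (b\ast a)\ast c + b\ast(a\ast c)$ produces sixteen $\lhd,\rhd$-terms; the pure-$\rhd$ contribution $-a\rhd(b\rhd c) + b\rhd(a\rhd c)$ cancels against $(a\rhd c)\lhd b - (b\rhd c)\lhd a$ via \meqref{ND1} combined with a further use of \meqref{ND3}, and two applications of \meqref{ND2} (once as stated, once with $a$ and $b$ swapped) eliminate the remaining mixed-type terms. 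The main obstacle is just the careful bookkeeping in this final cancellation; no conceptual difficulty arises.

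The converse is then immediate: if $(A, L_\rhd, R_\lhd)$ is a representation of $(A, \ast)$, the term-by-term dictionary established above yields \meqref{ND1}--\meqref{ND4}, so $(A, \lhd, \rhd)$ is pre-Novikov. In fact the Novikov hypothesis on $\ast$ is redundant under the representation hypothesis, but it is retained in the statement so that the equivalence reads symmetrically.
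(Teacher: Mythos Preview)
Your proof is correct. The paper does not give its own proof of this proposition---it is quoted from \cite[Proposition 3.31]{HBG}---so your direct verification via the dictionary between \meqref{ND1}--\meqref{ND4} and the representation axioms \meqref{lef-mod1}--\meqref{Nov-mod2}, followed by the explicit check of the Novikov identities for $\ast$, is exactly the expected argument and is carried out accurately.
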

\begin{rmk}
By Proposition \ref{pre-Nov-Nov}, the operad of pre-Novikov algebras is the successor of the operad of Novikov algebras in the sense of \cite{BBGN}.
\end{rmk}

\begin{defi} \cite{Lo}
Let $A$ be a vector space. If there is a binary operation `` $\cdot $ " on $A$ satisfying
\begin{equation}
a\cdot(b\cdot c)=(a\cdot b+b\cdot a)\cdot c ,\nonumber
\end{equation}
for all $a, b, c\in A$ , then $(A,\cdot)$ is called a {\bf Zinbiel algebra}.
\end{defi}
\begin{rmk}
Note that for a Zinbiel algebra $(A, \cdot)$, for all $a, b, c\in A$,
\begin{eqnarray*}
a \cdot (b\cdot c)=b\cdot (a\cdot c).
\end{eqnarray*}
\end{rmk}
\begin{pro}\label{constr1}
Let $(A, \cdot)$ be a Zinbiel algebra, $D$ be a derivation of $(A, \cdot)$ and $\xi\in \mathbb{C}$. Define binary operations $\lhd$ and $\rhd$ on $A$ as follows:
\begin{eqnarray}
a\lhd b:=D(b)\cdot a+\xi b\cdot a,\;\; a\rhd b:= a\cdot D(b)+\xi a \cdot b, \;\;\text{for all $a$, $b\in A.$}
\end{eqnarray}
Then $(A, \lhd, \rhd)$ is a pre-Novikov algebra.
\end{pro}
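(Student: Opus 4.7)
The plan is to apply Proposition \ref{pre-Nov-Nov}, which reduces the claim to verifying that the combined product $a * b := a\lhd b + a\rhd b$ endows $A$ with a Novikov algebra structure and that $(A, L_\rhd, R_\lhd)$ is a representation of $(A, *)$.

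The first key observation is that the Zinbiel identity $a \cdot (b \cdot c) = (a \cdot b + b \cdot a) \cdot c$ directly implies that the symmetrization $a \diamond b := a \cdot b + b \cdot a$ is a commutative associative product on $A$, and that $D$ remains a derivation of $(A, \diamond)$ because it is a derivation of $(A, \cdot)$. A short computation then gives
$$a * b = a \diamond D(b) + \xi\, a \diamond b,$$
which is a one-parameter version of the classical construction of a Novikov algebra from a commutative associative algebra equipped with a derivation. The Novikov axioms (left-symmetry and right-commutativity) for $*$ then follow by a routine expansion using commutativity of $\diamond$ and the Leibniz rule for $D$.

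For the four representation axioms, the uniform strategy is to bring every left-nested product into the form $(\cdots)\cdot v$ via the Zinbiel identity. Since
$$L_\rhd(a)(v) = a\cdot D(v) + \xi\, a\cdot v, \qquad R_\lhd(a)(v) = D(a)\cdot v + \xi\, a\cdot v,$$
the axiom $R_\lhd(a) R_\lhd(b) = R_\lhd(b) R_\lhd(a)$, for instance, reduces to showing that
$$(D(a)\diamond D(b) + \xi\, D(a)\diamond b + \xi\, a\diamond D(b) + \xi^{2}\, a\diamond b)\cdot v$$
is symmetric in $a$ and $b$, which is immediate from commutativity of $\diamond$. The remaining three representation axioms are handled by the same Zinbiel-rewriting strategy, each time expanding $L_\rhd$ and $R_\lhd$ via their definitions, applying the Zinbiel identity to every subterm $x\cdot(y\cdot v)$, and using the expression for $a*b$ derived above.

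The main obstacle is purely bookkeeping: each axiom expands into several terms of varying degree in $D$ and $\xi$, and the goal is to apply the Zinbiel identity to every product of the form $x\cdot(y\cdot v)$ so that both sides reduce to a common normal form of type $F(a,b,D)\cdot v$; once this is achieved, commutativity of $\diamond$ and the Leibniz rule for $D$ close the computation in each case.
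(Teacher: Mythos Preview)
Your proof is correct but takes a genuinely different route from the paper. The paper verifies the four pre-Novikov identities \eqref{ND1}--\eqref{ND4} directly: it expands each side of \eqref{ND1} using the definitions of $\lhd$ and $\rhd$, applies the Zinbiel identity and the Leibniz rule term by term, and groups the resulting expressions until they cancel; the remaining three axioms are declared ``similar''. Your approach instead invokes Proposition~\ref{pre-Nov-Nov} and exploits the structural observation that the combined product $a\ast b = a\lhd b + a\rhd b$ equals $a\diamond D(b)+\xi\,a\diamond b$, where $\diamond$ is the commutative associative symmetrization of the Zinbiel product. This reduces the Novikov axioms for $\ast$ to the classical ``commutative associative algebra with derivation'' construction, and the four representation identities \eqref{lef-mod1}--\eqref{Nov-mod2} all collapse via the single Zinbiel rewriting $x\cdot(y\cdot v)=(x\diamond y)\cdot v$, after which only commutativity of $\diamond$ and the derivation property of $D$ are needed. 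Your argument is more conceptual and keeps the bookkeeping lighter, since every verification reduces to matching expressions of the form $F(a,b,D)\cdot v$ with $F$ living in the commutative associative algebra $(A,\diamond)$; the paper's direct expansion trades this structural insight for a self-contained computation that does not rely on Proposition~\ref{pre-Nov-Nov}.
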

\begin{proof}
 For all $a,b,c\in A$,
\begin{align*}
&a\rhd(b\rhd c)-(a\rhd b+a\lhd b)\rhd c-b\rhd(a\rhd c)+(b\rhd a+b\lhd a)\rhd c\\
=&a\rhd(b\cdot D(c)+\xi b \cdot c)-(a\cdot D(b)+\xi a \cdot b+D(b)\cdot a+\xi b\cdot a)\rhd c-b\rhd(a\cdot D(c)+\xi a \cdot c)\\
&+(b\cdot D(a)+\xi b \cdot a+D(a)\cdot b+\xi a\cdot b)\rhd c\\
=&a\cdot D( b\cdot D(c)+\xi b \cdot c)+\xi a\cdot(b\cdot D(c)+\xi b \cdot c) -(a\cdot D(b)+\xi a \cdot b+D(b)\cdot a+\xi b\cdot a)\cdot D(c)\\
&-\xi(a\cdot D(b)+\xi a \cdot b+D(b)\cdot a+\xi b\cdot a)\cdot c-b\cdot D(a\cdot D(c)+\xi a \cdot c)-\xi b\cdot(a\cdot D(c)+\xi a \cdot c)\\
&+(b\cdot D(a)+\xi b \cdot a+D(a)\cdot b+\xi a\cdot b)\cdot D(c)+\xi(b\cdot D(a)+\xi b \cdot a+D(a)\cdot b+\xi a\cdot b)\cdot c\\
=&a\cdot(D(b)\cdot D(c)+b\cdot D^2(c)+\xi D(b)\cdot c+\xi b\cdot D(c))+\xi a\cdot(b\cdot D(c))+\xi^2 a\cdot(b\cdot c)\\
&-(a\cdot D(b)+\xi a\cdot b+D(b)\cdot a+\xi b\cdot a)\cdot D(c)-\xi (a\cdot D(b)+\xi a \cdot b+D(b)\cdot a+\xi b\cdot a)\cdot c\\
&-b\cdot (D(a)\cdot D(c)+a\cdot D^2(c)+\xi D(a) \cdot c+\xi a\cdot D(c))-\xi b\cdot(a\cdot D(c)+\xi a \cdot c)\\
&+(b\cdot D(a)+\xi b \cdot a+D(a)\cdot b+\xi a\cdot b)\cdot D(c)+\xi(b\cdot D(a)+\xi b \cdot a+D(a)\cdot b+\xi a\cdot b)\cdot c\\
=& (a\cdot (D(b)\cdot D(c))-(a\cdot D(b)+D(b)\cdot a)\cdot D(c))+(a\cdot (b\cdot D^2(c))-b\cdot (a\cdot D^2(c))\\
&+((b\cdot D(a)+D(a)\cdot b)\cdot c-b\cdot (D(a)\cdot D(c)))+\xi(a\cdot (D(b)\cdot c)-(a\cdot D(b)+D(b)\cdot a)\cdot c)\\
&+\xi(a\cdot (b\cdot D(c))-(a\cdot b+b\cdot a)\cdot D(c))-\xi (b\cdot (D(a)\cdot c)-(b\cdot D(a)+D(a)\cdot b)\cdot c)\\
&+\xi(a\cdot (b\cdot D(c))-b\cdot (a\cdot D(c)))-\xi (b\cdot (a\cdot D(c))-(b\cdot a+a\cdot b)\cdot D(c))\\
&+\xi^2(a\cdot (b\cdot c)-(a\cdot b+b\cdot a)\cdot c)-\xi^2(b\cdot (a\cdot c)-(b\cdot a+a\cdot b)\cdot c)\\
=&0.
\end{align*}
Therefore (\ref{ND1}) holds. Other equalities can be check similarly.
\end{proof}

\begin{rmk}
Note that the construction given in Proposition \ref{constr1} when $\xi=0$ was given in \cite{HBG}.
\end{rmk}

Next, we introduce a class of new algebras named pre-Gel'fand-Dorfman algebras.
\begin{defi}
Let $(A, \lhd, \rhd)$ be a pre-Novikov algebra and $(A, \circ)$ be a left-symmetric algebra. If they satisfy the following compatibility conditions
\begin{eqnarray}
&&c\lhd(a\circ b-b\circ a)-a\circ(c\lhd b)-(b\circ c)\lhd a=-b\circ(c\lhd a)-(a\circ c)\lhd b,\label{lnd1}\\
&&(a\circ b-b\circ a)\rhd c+(a\lhd b+a\rhd b)\circ c
=a\rhd(b\circ c)-b\circ(a\rhd c)+(a\circ c)\lhd b,\label{lnd2}
\end{eqnarray}
for all $a,b,c\in V$, then this quadruple
$(V,\lhd,\rhd,\circ)$ is called a {\bf pre-Gel'fand-Dorfman algebra}.
\end{defi}

In order to understand this definition, we recall the definitions of Gel'fand-Dorfman algebra and its representation.

\begin{defi}\cite{X1}
Let $(A, \ast)$ be a Novikov algebra and $(A, [\cdot, \cdot])$ be a Lie algebra. If they also satisfy the following compatibility condition
\begin{eqnarray}
[a\ast b, c]-[a\ast c,b]+[a,b]\ast c-[a, c]\ast b-a\ast [b,c]=0,\;\;\text{ for all $a$, $b$, $c\in A,$}
\end{eqnarray}
then $(A, \ast, [\cdot, \cdot])$ is called a {\bf Gel'fand-Dorfman algebra}.
\end{defi}
\begin{rmk}
Note that such algebra is called Gel'fand-Dorfman bialgebra in \cite{X1}. To avoid a confusion with the definition of usual bialgebra which is composed by an algebra and a co-algebra, we call it Gel'fand-Dorfman algebra as \cite{KSO}.  \end{rmk}

\begin{defi}\cite{WenH}
Let $(A, \ast, [\cdot, \cdot])$ be a Gel'fand-Dorfman algebra and $V$ be a vector space, together with a bilinear map  $\rho_A $: $A \to \text{End}_{\mathbb{C}}(V)$ and two linear maps $l_A$,$\ r_A$: $A \to \text{End}_{\mathbb{C}}(V)$. Then $(V, l_A, r_A, \rho_A)$ is called a {\bf representation} of $(A, \ast, [\cdot, \cdot])$ if $(V,\rho_A)$ is a representation of the Lie algebra $(A, [\cdot,\cdot])$, $(V, l_A, r_A)$ is a representation of the Novikov algebra $(A, \ast)$, and
\begin{eqnarray*}
&&\rho_A(a)l_A(b)v+\rho_A(b\ast a) v+l_A([b,a])v-r_A(a)\rho_A(b)v-l_A(b)\rho_A(a)v=0,\\
&&\rho_A(a)r_A(b)v-\rho_A(b)r_A(a)v-r_A(b)\rho_A(a)v+r_A(a)\rho_A(b)v-r_A([a,b])v =0,
\end{eqnarray*}
for all $a$, $b\in A$, $v\in V$.
\end{defi}

We give the relationship between Gel'fand-Dorfman algebras and pre-Gel'fand-Dorfman algebras as follows.
\begin{pro}\label{GD-PGD}
Let $(A,\lhd,\rhd, \circ)$ be a pre-Gel'fand-Dorfman algebra. Define
\begin{eqnarray}\label{GD1}
a*b=a\lhd b+a\rhd b,~~\left[a ,b\right]=a\circ b-b\circ a,\;\;\;\text{for all $a$, $b\in A$.}
\end{eqnarray}
Then $(A,*,\left[\cdot,\cdot\right])$ is a Gel'fand-Dorfman algebra, which is called the {\bf associated Gel'fand-Dorfman algebra} of $(A, \lhd, \rhd, \circ)$.
Moreover,  $(A,  L_{\rhd}, R_{\lhd}, L_\circ)$ is
a representation of the Gel'fand-Dorfman algebra $(A, \ast, [\cdot, \cdot])$.

Conversely, let $A$ be a vector space
with binary operations $\rhd$, $\lhd$ and $\circ$. If $(A, \ast, [\cdot, \cdot])$ defined by Eq.~\meqref{GD1} is a Gel'fand-Dorfman algebra and $(A,  L_{\rhd}, R_{\lhd}, L_\circ)$ is a representation of $(A, \ast, [\cdot, \cdot])$, then $(A, \lhd, \rhd, \circ)$ is a pre-Gel'fand-Dorfman algebra.
\end{pro}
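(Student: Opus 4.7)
The strategy is to assemble the proposition from three modular pieces rather than expand everything by brute force. First, Proposition \ref{pre-Nov-Nov} already handles the entire Novikov part: on the level of $(\lhd, \rhd)$, a pre-Novikov structure is interchangeable with the data of a Novikov algebra $(A,*)$ together with its canonical representation $(A, L_\rhd, R_\lhd)$. Second, the left-symmetric algebra $(A,\circ)$ classically yields a Lie algebra via $[a,b]=a\circ b-b\circ a$, and $L_\circ$ is automatically a representation, because the identity $[L_\circ(a),L_\circ(b)] = L_\circ([a,b])$ applied to a test vector $v$ is literally the left-symmetric axiom \eqref{lsalgebra}. So in both directions, the only nontrivial content is to match the two ``cross'' axioms \eqref{lnd1}, \eqref{lnd2} with the two representation-compatibility conditions of a Gel'fand-Dorfman representation, and then separately to derive the Gel'fand-Dorfman compatibility on $(A,*,[\cdot,\cdot])$ itself from \eqref{lnd1}, \eqref{lnd2}.

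\textbf{Matching the representation conditions.} Substituting $\rho_A=L_\circ$, $l_A=L_\rhd$, $r_A=R_\lhd$ and evaluating the second representation-compatibility condition at an arbitrary $v=c\in A$ produces $a\circ(c\lhd b)-b\circ(c\lhd a)-(a\circ c)\lhd b+(b\circ c)\lhd a-c\lhd[a,b]=0$, which is exactly $-1$ times \eqref{lnd1}. The first condition, after swapping $a\leftrightarrow b$ and rearranging, becomes $[a,b]\rhd c+(a*b)\circ c=a\rhd(b\circ c)-b\circ(a\rhd c)+(a\circ c)\lhd b$, which is precisely \eqref{lnd2}. Thus these two pairs of identities are equivalent, and this half of the proof requires only direct substitution.

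\textbf{Deriving the Gel'fand-Dorfman compatibility.} The remaining identity to verify in the forward direction is $E := [a*b,c]-[a*c,b]+[a,b]*c-[a,c]*b-a*[b,c]=0$. I would expand $E$ in terms of $\lhd$, $\rhd$, $\circ$ and sort the summands by outer operation. Using \eqref{lnd2}, and the variant of \eqref{lnd2} obtained by swapping $b$ and $c$, one can rewrite the pairs $[a,b]\rhd c+(a*b)\circ c$ and $[a,c]\rhd b+(a*c)\circ b$; every $\rhd$-outer term then cancels, and all but two of the $\circ$-outer terms disappear as well. What remains is a pure $\lhd$-expression in $a,b,c$ that, after elementary rearrangement, is exactly $-1$ times \eqref{lnd1} with the cyclic relabelling $(a,b,c)\mapsto(b,c,a)$. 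Hence $E=0$. This calculation is the main obstacle of the proof, but it becomes transparent once the role of \eqref{lnd2} as an identity combining $\rhd$ and $\circ$ is exploited to clear those outer operations.

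\textbf{Converse.} In the reverse direction, everything can be read backwards: given $(A,*,[\cdot,\cdot])$ a Gel'fand-Dorfman algebra with $(A, L_\rhd, R_\lhd, L_\circ)$ as a representation, Proposition \ref{pre-Nov-Nov} immediately promotes $(A,\lhd,\rhd)$ to a pre-Novikov algebra. The Lie-representation condition $[L_\circ(a),L_\circ(b)]=L_\circ([a,b])$, spelt out on elements, is the left-symmetric axiom \eqref{lsalgebra} for $\circ$. Finally, the two representation-compatibility conditions translate, by the same substitutions as above, into \eqref{lnd1} and \eqref{lnd2}; the Gel'fand-Dorfman compatibility of $(A,*,[\cdot,\cdot])$ is not needed here, since only the representation data and the algebra structures are used to produce the defining pre-Gel'fand-Dorfman identities. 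This completes the equivalence.
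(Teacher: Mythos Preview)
Your proposal is correct, and the verifications you sketch go through exactly as you describe: the two representation-compatibility identities for $(A,L_\rhd,R_\lhd,L_\circ)$ are literally \eqref{lnd1} and \eqref{lnd2} after the substitutions you indicate, and the Gel'fand-Dorfman compatibility $E=0$ reduces, after using \eqref{lnd2} and its $b\leftrightarrow c$ variant to eliminate the $\rhd$- and most $\circ$-outer terms, to the cyclic relabelling of \eqref{lnd1} you identify. The paper's own proof is the single sentence ``It is straightforward to check'', so there is no further approach to compare against; your modular organisation (Novikov part via Proposition~\ref{pre-Nov-Nov}, Lie part via left-symmetry, cross terms via \eqref{lnd1}--\eqref{lnd2}) is a clean way to carry out that check.
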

\begin{proof}
It is straightforward to check.
\end{proof}

\begin{rmk}
By Proposition \ref{GD-PGD}, the operad of pre-Gel'fand-Dorfman algebras is the successor of the operad of Gel'fand-Dorfman algebras in the sense of \cite{BBGN}.
\end{rmk}

Then Proposition \ref{ql} can be revised as follows using pre-Gel'fand-Dorfman algebras.

\begin{thm}\label{lspn}
A quadratic left-symmetric conformal algebra  $R=\mathbb{C}\left[\partial\right]V$ with the $\lambda$-products
\begin{equation}
a_\lambda b=\partial(b\lhd a)+a\circ b+\lambda(a\rhd b+b\lhd a),\label{lnd}\;\; \text{for all $a$, $b\in V$},
\end{equation}
is equivalent to a pre-Gel'fand-Dorfman algebra
$(V,\lhd,\rhd,\circ)$. We say that $R=\mathbb{C}[\partial]V$ is the {\bf quadratic left-symmetric conformal algebra corresponding to $(V,\lhd,\rhd,\circ)$}.
\end{thm}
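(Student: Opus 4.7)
The plan is to reduce the statement to Proposition \ref{ql} by finding the correct dictionary between the operations $\ast_1,\ast_2$ and the operations $\lhd,\rhd$. Comparing Eq.~\eqref{uv} in the form $a_\lambda b=\partial(a\ast_1 b)+a\circ b+\lambda(a\ast_2 b)$ with the $\lambda$-product \eqref{lnd}, I read off the substitution
\begin{equation*}
a\ast_1 b = b\lhd a, \qquad a\ast_2 b = a\rhd b + b\lhd a,
\end{equation*}
which is invertible, since $a\lhd b = b\ast_1 a$ and $a\rhd b = a\ast_2 b - a\ast_1 b$. Thus specifying a $\lambda$-product of the form \eqref{lnd} on $R=\mathbb{C}[\partial]V$ is the same as specifying a triple of bilinear operations $(\lhd,\rhd,\circ)$ on $V$, and by Proposition \ref{ql} the left-symmetry of $\cdot_\lambda\cdot$ is equivalent to $(V,\circ)$ being left-symmetric together with the nine identities \eqref{oe1}--\eqref{oe9}. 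So the whole theorem reduces to showing that, under this substitution, \eqref{oe1}--\eqref{oe9} are equivalent to the four pre-Novikov axioms \eqref{ND1}--\eqref{ND4} together with the two compatibility conditions \eqref{lnd1}--\eqref{lnd2}.

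I would handle the nine identities in three blocks. The first block \eqref{oe1}--\eqref{oe6} involves only $\ast_1$ and $\ast_2$, hence only $\lhd$ and $\rhd$ after substitution; this block should be equivalent to the pre-Novikov axioms \eqref{ND1}--\eqref{ND4}. Concretely, I expect \eqref{oe1} to reproduce \eqref{ND4} (the right-commutativity of $\lhd$); the pair \eqref{oe2}--\eqref{oe3} and the pair \eqref{oe5}--\eqref{oe6} should, after a swap of arguments, each collapse to the same pre-Novikov relation, which accounts for why there are six identities but only four pre-Novikov axioms; and \eqref{oe4} should yield \eqref{ND3}. The second block \eqref{oe7}--\eqref{oe9} mixes in $\circ$; here \eqref{oe7} (which is free of $\ast_2$) should give \eqref{lnd1}, and the remaining two identities \eqref{oe8}--\eqref{oe9} should together be equivalent to \eqref{lnd2} modulo the relations already obtained and the left-symmetry of $\circ$.

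In practice I would organize the argument as a table: rewrite each of \eqref{oe1}--\eqref{oe9} by substituting $a\ast_1 b = b\lhd a$ and $a\ast_2 b = a\rhd b+b\lhd a$, cancel the $b\lhd a$ terms that appear on both sides of the difference $\ast_2-\ast_1$, relabel the free variables $a,b,c$ so that the outer operation matches \eqref{ND1}--\eqref{ND4} or \eqref{lnd1}--\eqref{lnd2}, and verify the resulting identity is literally one of the six target axioms. This is a mechanical but lengthy bookkeeping, because several of the nine identities expand into sums of four to six terms and only simplify after combining terms from different axioms.

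The main obstacle I expect is precisely this bookkeeping, especially for \eqref{oe5}, \eqref{oe8} and \eqref{oe9}: the expression $a\ast_2 b = a\rhd b+b\lhd a$ is not symmetric in $\lhd$ and $\rhd$, so substituting it creates cross-terms that look like neither \eqref{ND1}--\eqref{ND4} nor \eqref{lnd1}--\eqref{lnd2} on first glance, and one has to use a previously-established identity (typically \eqref{ND3} or \eqref{ND4}) to absorb the extra terms. The left-symmetry of $\circ$ will likewise be needed to rewrite $a\circ(b\circ c)-b\circ(a\circ c)$ as $(a\circ b-b\circ a)\circ c$ when reconciling \eqref{oe8} and \eqref{oe9} with \eqref{lnd2}. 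Once all nine substituted identities have been matched against the six pre-Gel'fand-Dorfman axioms, both implications follow, completing the proof.
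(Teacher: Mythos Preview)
Your proposal is correct and follows essentially the same route as the paper: the paper's proof also invokes Proposition~\ref{ql}, makes the identical substitution $a\ast_1 b=b\lhd a$, $a\ast_2 b=a\rhd b+b\lhd a$, and then asserts (without displaying the computations) that \eqref{oe1}--\eqref{oe6} are equivalent to the pre-Novikov axioms while \eqref{oe7}--\eqref{oe9} are equivalent to \eqref{lnd1}--\eqref{lnd2}. Your block decomposition and anticipated use of earlier identities to absorb cross-terms is exactly the bookkeeping the paper hides behind ``one can directly check''; some of your one-to-one guesses (e.g.\ \eqref{oe4} alone giving \eqref{ND3}) may need adjustment, but you have already flagged this as the expected obstacle.
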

\begin{proof}
By Proposition \ref{ql}, we set $a\ast_1 b=b\lhd a$, $a\ast_2 b=a\rhd b+b\lhd a$ for all $a$, $b\in A$. One can directly check that (\ref{oe1})-(\ref{oe6}) are equivalent to that $(V, \lhd, \rhd)$ is a pre-Novikov algebra,
and  (\ref{oe7})-(\ref{oe9}) are equivalent to (\ref{lnd1}) and (\ref{lnd2}). Then the proof is finished.
\end{proof}

\delete{\begin{rmk}
Obviously, the $\lambda$-product of a quadratic left-symmetric conformal algebra can be always written as the form given by Eq. (\ref{lnd}). Therefore, a quadratic left-symmetric conformal algebra is equivalent to a left-symmetric pre-Novikov algebra.
\end{rmk}}

\begin{rmk}
Let $R=\mathbb{C}[\partial]V$ be the quadratic left-symmetric conformal algebra  corresponding to a pre-Gel'fand-Dorfman algebra $(V, \lhd, \rhd, \circ)$. By the definition of coefficient algebra, $\text{Coeff}(R)$ can be seen as $V\otimes \mathbb{C}[t,t^{-1}]$ with products given by
\begin{eqnarray*}
(a\otimes t^m) \circ (b\otimes t^n)=m(a\rhd b)\otimes t^{m+n-1}-n(b\lhd a)\otimes t^{m+n-1}+(a\circ b)\otimes t^{m+n},\;\;a, b\in V, m, n\in \mathbb{Z}.
\end{eqnarray*}
\end{rmk}

There is a natural construction of pre-Gel'fand-Dorfman algebras via pre-Novikov algebras.
\begin{pro}\label{Cons-pre-GD}
Let $(A, \lhd, \rhd)$ be a pre-Novikov algebra. Define the operation `` $\circ$ " on $A$ by
\begin{equation}
a\circ b:=k(a\rhd b-b\lhd a),\;\;\;\;\text{for all $a$, $b\in A$ and some fixed $k\in\mathbb{C}$}.
\end{equation}
Then $(A, \lhd, \rhd, \circ)$ is a pre-Gel'fand-Dorfman algebra.
\end{pro}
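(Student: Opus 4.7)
The plan is to reduce the proposition to the converse direction of Proposition \ref{GD-PGD}. I set $a\ast b:=a\lhd b+a\rhd b$; by Proposition \ref{pre-Nov-Nov}, $(A,\ast)$ is a Novikov algebra and $(A,L_\rhd,R_\lhd)$ is a representation of it. Expanding the definition of $\circ$ directly yields
\begin{equation*}
[a,b]:=a\circ b-b\circ a=k(a\ast b-b\ast a),\qquad L_\circ=k(L_\rhd-R_\lhd).
\end{equation*}
Thus the auxiliary data $(\ast,[\cdot,\cdot])$ and the action $(L_\rhd,R_\lhd,L_\circ)$ required by Proposition \ref{GD-PGD} are entirely expressible in terms of Novikov data.

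The proof then reduces to two general facts. The first is that for any Novikov algebra $(B,\ast)$ and any $k\in\mathbb{C}$, the triple $(B,\ast,[\cdot,\cdot])$ with $[a,b]:=k(a\ast b-b\ast a)$ is a Gel'fand-Dorfman algebra. Since a Novikov product is left-symmetric, its commutator is a Lie bracket and scaling by $k$ preserves this. The Gel'fand-Dorfman compatibility, upon expanding all brackets, collapses via the Novikov identity $(x\ast y)\ast z=(x\ast z)\ast y$ together with the left-symmetry of $\ast$ into a cancelling sum. The second fact is that for any representation $(V,l,r)$ of a Novikov algebra $(B,\ast)$, the quadruple $(V,l,r,k(l-r))$ is a representation of the associated Gel'fand-Dorfman algebra. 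Substituting $\rho=k(l-r)$ into the Lie-representation property $\rho([a,b])=\rho(a)\rho(b)-\rho(b)\rho(a)$ and into the two mixed Novikov-Lie compatibility identities in the definition of such a representation, one finds that each resulting expression decomposes into instances of the Novikov-representation axioms (\ref{lef-mod1})--(\ref{Nov-mod2}).

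Applying these two facts to $(A,\ast)$ and to its representation $(A,L_\rhd,R_\lhd)$, and invoking the converse of Proposition \ref{GD-PGD}, yields that $(A,\lhd,\rhd,\circ)$ is a pre-Gel'fand-Dorfman algebra. The main obstacle is the bookkeeping in the mixed Novikov-Lie compatibilities of the representation condition: many $l$- and $r$-terms appear after substitution, and one must repeatedly apply (\ref{lef-mod1})--(\ref{Nov-mod2}) to match them; but no identity beyond these axioms is needed. A more pedestrian alternative is to verify (\ref{lnd1}) and (\ref{lnd2}) by directly expanding $\circ=k(\rhd-\lhd^{\mathrm{op}})$ and invoking (\ref{ND1})--(\ref{ND4}) term by term, which is conceptually less clean but wholly mechanical.
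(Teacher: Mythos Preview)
Your proposal is correct but takes a genuinely different route from the paper. The paper proceeds by direct verification: it expands $\circ=k(\rhd-\lhd^{\mathrm{op}})$ in the left-symmetric identity and in \eqref{lnd1}, reducing each to combinations of \eqref{ND1}--\eqref{ND4}; \eqref{lnd2} is declared similar. You instead invoke the converse of Proposition~\ref{GD-PGD}, reducing the claim to two lemmas at the Novikov level: that $[a,b]=k(a\ast b-b\ast a)$ makes any Novikov algebra into a Gel'fand--Dorfman algebra (classical), and that $(V,l,r,k(l-r))$ is a representation of this Gel'fand--Dorfman algebra whenever $(V,l,r)$ is a Novikov representation. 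Applying these to $(A,\ast)$ and $(A,L_\rhd,R_\lhd)$ gives the result. Your approach is more conceptual---it exhibits the construction as the ``pre-'' lift of the standard Novikov $\Rightarrow$ Gel'fand--Dorfman passage, and your second lemma is of independent interest and would apply to arbitrary Novikov representations, not just the adjoint one. The paper's computation is shorter in practice and self-contained, not depending on Proposition~\ref{GD-PGD} (whose converse the paper leaves unproved). Your argument would be strengthened by actually writing out the verification of the second lemma rather than describing it, since that is where all the work lies; as it stands the proposal is a correct outline whose bookkeeping burden is comparable to the paper's direct check.
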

\begin{proof}
We only need to check that $(A, \circ)$ is a left-symmetric algebra, and \eqref{lnd1}, \eqref{lnd2} hold.
Let $a$, $b$, $c\in A$. Firstly, we check that $(A, \circ)$ is a left-symmetric algebra.
\begin{align*}
&(a\circ b)\circ c-a\circ(b\circ c)-(b\circ a)\circ c+b\circ(a\circ c)\\
=&k^2\big((a\rhd b-b\lhd a)\rhd c-c\lhd(a\rhd b-b\lhd a)-a\rhd(b\rhd c-c\lhd b)+(b\rhd c-c\lhd b)\lhd a\\
&-(b\rhd a-a\lhd b)\rhd c+c\lhd(b\rhd a-a\lhd b)+b\rhd(a\rhd c-c\lhd a)-(a\rhd c-c\lhd a)\lhd b\big)\\
=&k^2\Big (\big ((a\rhd b+a\lhd b)\rhd c-a\rhd(b\rhd c)+b\rhd(a\rhd c)-(b\rhd a+b\lhd a)\rhd c  \big)\\
&+(c\lhd(b\lhd a+b\rhd a)-b\rhd(c\lhd a)+(b\rhd c-c\lhd b)\lhd a)\\
&-(c\lhd(a\lhd b+a\rhd b)-a\rhd(c\lhd b)+(a\rhd c-c\lhd a)\lhd b) \Big) \\
=&0.
\end{align*}
Secondly, we check \eqref{lnd1}.
\begin{align*}
&c\lhd (a\circ b-b\circ a)-a\circ (c\lhd b)-(b\circ c)\lhd a +b\circ (c\lhd a)+(a\circ c)\lhd b\\
=&k(c\lhd(a\rhd b-b\lhd a-b\rhd a+a\lhd b)-a\rhd(c\lhd b)+(c\lhd b)\lhd a-(b\rhd c-c\lhd b)\lhd a\\
&+b\rhd(c\lhd a)-(c\lhd a)\lhd b+(a\rhd c-c\lhd a)\lhd b)\\
=&k\big((c\lhd(a\lhd b+a\rhd b)-a\rhd(c\lhd b)+(a\rhd c-c\lhd a)\lhd b)\\
&-(c\lhd(b\lhd a+b\rhd a)-b\rhd(c\lhd a)+(b\rhd c-c\lhd b)\lhd a)\big )\\
=&0.
\end{align*}
Similarly, \eqref{lnd2} can be checked.
\end{proof}

Next, we will use Proposition \ref{Cons-pre-GD} to present a construction of pre-Gel'fand-Dorfman algebras via Zinbiel algebras.

\begin{cor}\label{corollarylsnd}
Let $(A,\cdot)$ be a Zinbiel algebra, $D$ be a derivation on $(A,\cdot)$ and $\xi$, $k\in \mathbb{C}$. Define
\begin{eqnarray*}
&&a\lhd b:=D(b)\cdot a+\xi  b\cdot a ,~~a\rhd b:=a\cdot D(b)+\xi a \cdot b,\\
&&a\circ b:=k(a\cdot D(b)- D(a)\cdot b+\xi (a\cdot b-b\cdot a)), 
\end{eqnarray*}
 for all $a, b\in A$. Then $(A,\lhd,\rhd, \circ)$ is a pre-Gel'fand-Dorfman algebra.
\end{cor}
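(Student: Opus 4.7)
The plan is as follows. By Proposition \ref{constr1}, the operations $\lhd$ and $\rhd$ defined here already make $(A,\lhd,\rhd)$ a pre-Novikov algebra, so axioms \eqref{ND1}--\eqref{ND4} are automatic. What remains is to check that $(A,\circ)$ is left-symmetric and that the two compatibility identities \eqref{lnd1} and \eqref{lnd2} hold.

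The key observation I would exploit is that the $\xi$-contributions to $\lhd$ and $\rhd$ cancel in the combination $a\rhd b - b\lhd a$: a direct computation gives $a\rhd b - b\lhd a = a\cdot D(b) - D(a)\cdot b$. Hence the prescribed product decomposes as
\[
a\circ b \;=\; k(a\rhd b - b\lhd a) \;+\; k\xi(a\cdot b - b\cdot a) \;=:\; \circ_1(a,b) + \psi(a,b),
\]
and Proposition \ref{Cons-pre-GD} applied to $(A,\lhd,\rhd)$ with the same constant $k$ immediately yields that $(A,\lhd,\rhd,\circ_1)$ is a pre-Gel'fand-Dorfman algebra. So the three identities are already known for $\circ_1$, and only the cross-contributions of the antisymmetric perturbation $\psi(a,b) = k\xi(a\cdot b - b\cdot a)$ remain to be controlled.

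To handle these, I would substitute $\circ = \circ_1 + \psi$ into each of left-symmetry, \eqref{lnd1}, and \eqref{lnd2}, and sort the resulting expressions by the number of copies of $\psi$ appearing. The pure $\circ_1$ pieces vanish by the previous paragraph. The remaining pieces become explicit polynomial expressions in the Zinbiel product and $D$, which should collapse using the two forms $x\cdot(y\cdot z) = (x\cdot y + y\cdot x)\cdot z = y\cdot(x\cdot z)$ of the Zinbiel axiom together with the Leibniz rule $D(x\cdot y) = D(x)\cdot y + x\cdot D(y)$. Terms linear in $\psi$ are where the Leibniz rule enters; terms quadratic in $\psi$ (which occur only in the left-symmetry check) are purely Zinbiel-algebraic and rely on the antisymmetry of $\psi$ together with the Zinbiel identity.

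The hard part will be bookkeeping rather than conceptual: the expansion of \eqref{lnd2} in particular produces many monomials in $D$, $\xi$, and the Zinbiel product, and one must organise the cancellations carefully, for instance by total degree in $\xi$ and in $D$, in the same spirit as the proof of Proposition \ref{constr1}. A more conceptual alternative would be to route the verification through Proposition \ref{GD-PGD}, checking the Gel'fand-Dorfman axioms for the associated pair $(A,\ast,[\cdot,\cdot])$ and the representation identities for $(A, L_\rhd, R_\lhd, L_\circ)$, but the total computational volume is comparable.
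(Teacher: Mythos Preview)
Your plan is sound and, in fact, more careful than the paper's own treatment. The paper offers no proof: it presents the statement as an immediate corollary of Proposition~\ref{Cons-pre-GD} applied to the pre-Novikov structure from Proposition~\ref{constr1}. That route is literally complete only when the displayed $\circ$ coincides with $k(a\rhd b-b\lhd a)$; your computation $a\rhd b-b\lhd a=a\cdot D(b)-D(a)\cdot b$ is correct, so the two agree only up to the antisymmetric tail $k\xi(a\cdot b-b\cdot a)$. Either that last summand in the corollary is a slip, or the paper is silently leaving this extra verification to the reader. In either case your decomposition $\circ=\circ_{1}+\psi$ and the plan to check the residual $\psi$-contributions directly via the Zinbiel identity and the Leibniz rule is exactly the way to make the argument honest; it buys a complete proof of the statement as written, at the price of the bookkeeping you anticipate.
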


We recall the definitions of left-symmetric Poisson algebra and Novikov-Poisson algebra.

\begin{defi} \label{alg} \cite{HL, X2}
Given a vector space $A$ with two binary operations  `` $\circ$ " and `` $\cdot $ ".
The triple $(A,\cdot,\circ)$ is called a \textbf{left-symmetric Poisson algebra} if  $(A, \circ)$  is a left-symmetric algebra,
 $(A, \cdot)$ is a commutative associative algebra, and they satisfy the compatibility conditions
\begin{align}
&(a\cdot b)\circ c=a\cdot (b\circ c),\label{lp1}\\
&(a\circ b)\cdot c-a\circ(b\cdot c)=(b\circ a)\cdot c-b\circ(a\cdot c), ~~~\text{for all $a$, $b$, $c\in A$}.\label{lp2}
\end{align}
In particular, if $(A, \circ)$ is a Novikov algebra, then
the triple $(A,\cdot,\circ)$ is called a \textbf{Novikov-Poisson algebra}.
\end{defi}

\begin{pro}
Let $(A, \cdot, \circ)$ be a left-symmetric Poisson algebra. Define
\begin{eqnarray*}
a\lhd b:=a\cdot b,\;\; a\rhd b:=0,\;\; \text{for all $a$, $b\in A$.}
\end{eqnarray*}
Then $(A, \lhd, \rhd, \circ)$ is a pre-Gel'fand-Dorfman algebra.
\end{pro}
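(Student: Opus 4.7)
The plan is to verify the defining axioms of a pre-Gel'fand-Dorfman algebra directly. Because $a\rhd b=0$ identically, most terms in \eqref{ND1}--\eqref{ND4} and \eqref{lnd1}--\eqref{lnd2} vanish, so the real work is to confirm that the surviving identities are consequences of the left-symmetric Poisson axioms \eqref{lp1}, \eqref{lp2} together with commutativity and associativity of $\cdot$.

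First I would check that $(A,\lhd,\rhd)$ is a pre-Novikov algebra. With $\lhd=\cdot$ and $\rhd=0$, both sides of \eqref{ND1} and \eqref{ND3} are immediately zero, \eqref{ND4} reduces to $(a\cdot b)\cdot c=(a\cdot c)\cdot b$ (commutativity plus associativity), and \eqref{ND2} reduces to $b\cdot(a\cdot c)=(b\cdot a)\cdot c$ (associativity). So this part is routine.

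Since $(A,\circ)$ is assumed to be left-symmetric, it then remains to verify the two compatibility conditions. For \eqref{lnd2}, substitution collapses the identity to $(a\cdot b)\circ c=(a\circ c)\cdot b$, which follows by applying \eqref{lp1} after swapping the two left factors using commutativity of $\cdot$: $(a\cdot b)\circ c=(b\cdot a)\circ c=b\cdot(a\circ c)=(a\circ c)\cdot b$.

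For \eqref{lnd1}, the key step will be to regroup the six surviving terms, after using commutativity of $\cdot$, as
\[
\bigl[(a\circ b)\cdot c - a\circ(b\cdot c)\bigr] - \bigl[(b\circ a)\cdot c - b\circ(a\cdot c)\bigr] + \bigl[b\cdot(a\circ c) - a\cdot(b\circ c)\bigr].
\]
The first two brackets cancel by \eqref{lp2}, and the third bracket vanishes by the same argument used for \eqref{lnd2}. There is no genuine obstacle here beyond this bookkeeping; the whole proposition amounts to translating the axioms of a left-symmetric Poisson algebra into the pre-Gel'fand-Dorfman language under the collapse $\rhd\equiv 0$.
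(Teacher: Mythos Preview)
Your proposal is correct and follows exactly the direct verification the paper has in mind; the paper itself simply declares the result ``straightforward'' (referring also to \cite[Corollary 3.9]{HL}) without writing out the details, so your explicit check of \eqref{ND1}--\eqref{ND4} and \eqref{lnd1}--\eqref{lnd2} is precisely the intended argument.
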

\begin{proof}
It is straightforward or one can refer to \cite[Corollary 3.9]{HL}.
\end{proof}

Finally, we present an example of Novikov-Poisson algebra.
\begin{ex}\label{exp1}
Let $(V, \cdot)$ be a commutative associative algebra, and $D$ be a derivation of $(V, \cdot)$. Define the operation `` $\circ$ " on $V$ by
\begin{eqnarray*}
x\circ y=x\cdot D(y),\;\; \text{for all $x$, $y\in V$.}
\end{eqnarray*}
 Then $(V,\cdot,\circ)$ forms a Novikov-Poisson algebra. This construction was presented in \cite[Lemma 2.1] {X2}.

 For example, let $(V=\mathbb{C}[t,t^{-1}], \cdot)$ be the Laurent polynomial algebra and $D$ be the derivation defined by $D(t^i)=it^i$ for all $i\in \ZZ$. Define the operation `` $\circ$ " on $V$ by
 \begin{eqnarray*}
t^i\circ t^j=t^i\cdot D(t^j)=jt^{i+j},\;\;  \text{for all $i$, $j\in \ZZ$.}
\end{eqnarray*}
Then $(V, \cdot, \circ)$ is a Novikov-Poisson algebra.
\end{ex}


\delete{Let $a$, $b$, $c\in A$. We check Eq. (\ref{lnd1}).
\begin{eqnarray*}
&&c\lhd (a\circ b-b\circ a)-a\circ (c\lhd b)-(b\circ c)\lhd a +b\circ (c\lhd a)+(a\circ c)\lhd b\\
&=&kD(a\cdot D(b)-D(a)\cdot b-b\cdot D(a)+kD(b)\cdot a)\cdot c-ka\cdot D(D(b)\cdot c)+kD(a)\cdot (D(b)\cdot c)\\
&&-kD(a)\cdot (b\cdot D(c)-D(b)\cdot c)+kb\cdot D(D(a)\cdot c)\\
&&-kD(b)\cdot (D(a)\cdot c)+kD(b)\cdot (a\cdot D(c)-D(a)\cdot c)\\
&=&k(a\cdot D^2(b)-D^2(a)\cdot b-b\cdot D^2(a)+D^2(b)\cdot a)\cdot c-ka\cdot (D^2(b)\cdot c+D(b)\cdot D(c))\\
&&+kD(a)\cdot (D(b)\cdot c)-kD(a)\cdot (b\cdot D(c))+kD(a)\cdot (D(b)\cdot c)+kb\cdot (D^2(a)\cdot c+D(a)\cdot D(c))\\
&&-kD(b)\cdot (D(a)\cdot c)+kD(b)\cdot (a\cdot D(c))-kD(b)\cdot (D(a)\cdot c)\\
&=&k\Big((a\cdot D^2(b)+D^2(b)\cdot a)\cdot c-a\cdot (D^2(b)\cdot c)\Big)+k\Big(-(D^2(a)\cdot b+b\cdot D^2(a))\cdot c+b\cdot(D^2(a)\cdot c)\Big)\\
&&-k(a\cdot (D(b)\cdot D(c))-D(b)\cdot (a\cdot D(c)))+2k(D(a)\cdot (D(b)\cdot c)-D(b)\cdot (D(a)\cdot c))\\
&&-k(D(a)\cdot (b\cdot D(c))-b\cdot (D(a)\cdot D(c)))\\
&=& 0.
\end{eqnarray*}
Similarly, Eq. \eqref{lnd2} can be checked.
\end{proof}

Finally,  we give an example of pre-Gel'fand-Dorfman algebra.

\begin{ex}\label{defzl}
Let $A=\mathbb{C}\left[x\right]$ as a vector space. We endow $A$ with a Zinbiel algebra structure as follows
\begin{equation}
\begin{aligned}
&x^i\cdot x^j=\tiny{\begin{pmatrix}
i+j-1\\
i\\
\end{pmatrix}}
 x^{i+j},
\end{aligned}
\end{equation}
for all $i,j\in\mathbb{Z}_+$.  We define $D(x^i)=ix^i$ then $D$ is a derivation of this Zinbiel algebra.
Consequently,  $(A,\lhd,\rhd,\circ)$ is a pre-Gel'fand-Dorfman algebra when $(A,\lhd,\rhd,\circ)$
satisfies \eqref{zinbiel2}.
\end{ex}}

\section{One-dimensional central extension of quadratic left-symmetric conformal algebras}

In this section, we will investigate the central extensions of quadratic left-symmetric conformal algebras by a one-dimensional centre $\mathbb{C}\mathfrak{c}_\beta $ with $\partial\mathfrak{c}_\beta=\beta\mathfrak{c}_\beta$. Denote $\mathfrak{c}_0$ by $\mathfrak{c}$.

\begin{defi} Let $R$ be a left-symmetric conformal algebra and $C$ be an abelian left-symmetric conformal algebra, i.e. $c_\lambda d=0$ for all $c$, $d\in C$. If there is a short exact sequence of left-symmetric conformal algebras
\begin{equation}
C\rightarrowtail \widehat{R}\twoheadrightarrow R,
\end{equation}
and $C_\lambda \widehat{R}=\widehat{R}_\lambda C =0$, then $\widehat{R}$ is called a {\bf central extension} of $R$ by $C$.

Let $\widehat{R}_1$ and $\widehat{R}_2$ be two central extensions of $R$ by $C$. We say that  $\widehat{R}_1$ is {\bf equivalent} to $\widehat{R}_2$ if there is a left-symmetric conformal algebra homomorphism $\varphi: \widehat{R}_1\rightarrow \widehat{R}_2$ such that the following diagram is commutative:
\begin{eqnarray}\label{diag}
\xymatrix{
  C ~\ar@{=}[d]  \ar@{>->}[r]   & \widehat{R}_1 \ar[d]^{\varphi} \ar@{->>}[r]    & R\ar@{=}[d]\\
  C ~\ar@{>->}[r]        & \widehat{R}_2 \ar@{->>}[r]               &R
}
\end{eqnarray}

\end{defi}

In the following, we investigate the case when $R$ is a quadratic left-symmetric conformal algebra and $C=\mathbb{C}\mathfrak{c}_\beta$.

Since $R$ is free as a $\mathbb{C}\left[\partial\right]$-module,
 we have $\widehat{R}=R\oplus \mathbb{C}\mathfrak{c}_\beta $ as a $\mathbb{C}[\partial]$-module.
Note that ${\mathfrak{c}_\beta}_\lambda \widehat{R}=\widehat{R}_\lambda \mathfrak{c}_\beta =0$. Set the $\lambda$-product on $\widehat{R}$ as follows:
\begin{eqnarray*}
\widehat{a_\lambda b}=a_\lambda b+\alpha_\lambda(a,b)\mathfrak{c}_\beta, \;\;\text{for all  $a$, $b\in R,$}
\end{eqnarray*}
where $\cdot_\lambda \cdot$ is the $\lambda$-product on $R$ and $\alpha_\lambda(\cdot,\cdot) $ is a $\mathbb{C} $-bilinear map from $R\times R $ to $\mathbb{C}[\lambda]$. By the definition of left-symmetric conformal algebra,
$\widehat{R}$
is a left-symmetric conformal algebra if and only if  $\alpha_\lambda(\cdot,\cdot) $ satisfies  the following conditions:
\begin{align}
&\alpha_\lambda(\partial a,b)=-\lambda\alpha_\lambda(a,b),\qquad \alpha_\lambda(a,\partial b)=(\lambda+\beta)\alpha_\lambda(a,b),\label{2}\\
&\alpha_{\lambda+\mu}(a_\lambda b,c)-\alpha_\lambda(a,b_\mu c)=\alpha_{\lambda+\mu}(b_\mu a,c)-\alpha_\mu(b,a_\lambda c),\label{4}
\end{align}
for all $a$, $b$, $c\in R$.
\delete{ If it is a central extension of the quadratic Novikov conformal algebra, then by (NovC3) $\alpha_\lambda(\cdot,\cdot)$ also hold:
\begin{equation}
\alpha_{\lambda+\mu}(a_\lambda b,c)=\alpha_{-\mu-\beta}(a_\mu c,b),\label{5}
\end{equation}
for all $a,b,c\in R$.}
Therefore, such $\widehat{R}$ are completely determined by those $\alpha_\lambda(\cdot,\cdot)$ satisfying (\ref{2}) and (\ref{4}). Then we say that $\alpha_\lambda(\cdot,\cdot)$ is {\bf equivalent } to $\alpha_\lambda^{'}(\cdot,\cdot)$ if the corresponding central extensions are equivalent.

Let $\widehat{R}_1$ and $\widehat{R}_2$ be two central extensions of $R$ by $\mathbb{C}\mathfrak{c}_\beta$ determined by $\alpha_\lambda(\cdot,\cdot)$ and $\alpha_\lambda^{'}(\cdot,\cdot)$ respectively. Let $\mathbb{C}_\beta=\mathbb{C}$  be a $\mathbb{C}[\partial]$-module where $\partial k=\beta k$ for all $k\in \mathbb{C}$. By the definition of equivalence of central extensions,
$\varphi$ in (\ref{diag}) must be as follows:
\begin{eqnarray*}
\varphi(a+c)=a+c+\phi(a)\mathfrak{c}_\beta,\;\; a\in R, c\in \mathbb{C}\mathfrak{c}_\beta,
\end{eqnarray*}
where $\phi: R\rightarrow \mathbb{C}_\beta$ is a $\mathbb{C}[\partial]$-module homomorphism. Moreover, $\varphi:\widehat{R}_1\rightarrow \widehat{R}_2$ is a homomorphism of left-symmetric conformal algebras if and only if $\phi$ satisfies $\alpha_\lambda(a,b)=\alpha'_\lambda(a,b)+\phi(a_\lambda b)$ for all $a$, $b\in R$. Therefore, $\alpha_\lambda(\cdot,\cdot)$ is equivalent to $\alpha_\lambda^{'}(\cdot,\cdot)$  if and only if $\phi$ satisfies $\alpha_\lambda(a,b)=\alpha'_\lambda(a,b)+\phi(a_\lambda b)$ for all $a$, $b\in R$.

 By the general cohomology theory of left-symmetric conformal algebra developed in \cite{ZH}, it is easy to see that $\alpha_\lambda(\cdot,\cdot) $ is a 2-cocycle in $H^2(R,\mathbb{C}_\beta )$
 and $\phi :$ $R\rightarrow \mathbb{C}_\beta$ is a 1-coboundary. Therefore, by the discussion above, central extensions of a left-symmetric conformal algebra $R$ which is free as a $\mathbb{C}[\partial]$-module by a one-dimensional centre $\mathbb{C}\mathfrak{c}_\beta$ up to equivalence are characterized by the second cohomology group $H^2(R,\mathbb{C}_\beta )$.

 In the sequel, for a pre-Gel'fand-Dorfman algebra $(V, \lhd, \rhd, \circ)$, we set
 \begin{eqnarray*}
 a\ast b=a\rhd b+a\lhd b,\;\; a\star b=a\rhd b+b\lhd a, \text{ for all $a$, $b\in V$.}
 \end{eqnarray*}
 Note that $(V, \ast)$ is a Novikov algebra.

\begin{thm}\label{lsndt1}
Let $R=\mathbb{C}\left[\partial\right]V$ be the quadratic left-symmetric conformal algebra corresponding to a pre-Gel'fand-Dorfman algebra $(V,\lhd,\rhd,\circ )$.
Let $\widehat{R}=R\oplus \mathbb{C}\mathfrak{c}_{\beta}$ be a central extension of $R$ by $\mathbb{C} \mathfrak{c}_{\beta}$ with the following $\lambda$-products
\begin{equation}\label{lsab}
\begin{aligned}
\widehat{a_\lambda b}=\partial(b\lhd a)+a\circ b+\lambda(a\star b)+\alpha_\lambda(a,b)\mathfrak{c}_\beta,\;\;\text{for all $a$, $b\in V$.}
\end{aligned}
\end{equation}
Suppose that $\alpha_\lambda(a,b)=\sum\limits_{i=0}^{n}\lambda^i\alpha_i(a,b)\in\mathbb{C}\left[\lambda\right]$ for all $a,b\in V$, with $\alpha_i(\cdot, \cdot): V\times V\rightarrow \mathbb{C}$ and $\alpha_n(a,b)\neq0$
for some $a,b\in V$. Then we have\\
(1) Suppose that $\alpha_\lambda(a,b)=\sum\limits_{i=0}^{3}\lambda^i\alpha_i(a,b)$ for all $a,b\in V$. Then for all $a$, $b$, $c\in V$,
\begin{eqnarray}
&&\alpha_3(a\ast b,c)=\alpha_3(b\ast a,c)=\alpha_3(a, c\lhd b)=\alpha_3(b, a\rhd c),\label{qqlsa}\\
&&\alpha_2(a\ast b,c)-\alpha_2(a,c\lhd b)-\beta\alpha_3(a,c\lhd b)-\alpha_3(a,b\circ c)=\alpha_3(b\circ a,c)-\alpha_3(a\circ b,c),\label{qlsa4}\\
&&2\alpha_2(a\ast b,c)-\alpha_2(b\ast a,c)-\alpha_2(a,b\star c)=3\alpha_3(b\circ a,c)-3\alpha_3(a\circ b,c),\label{qlsa5}\\
&&\alpha_1(a\ast b,c)-\alpha_1(a,c\lhd b)- \beta\alpha_2(a,c\lhd b)-\alpha_2(a,b\circ c)=\alpha_2(b\circ a,c)-\alpha_2(a\circ b,c),\label{qlsa6}\\
&&\alpha_1(a\ast b,c)-\alpha_1(b\ast a,c)-\alpha_1(a,b\star c)+\alpha_1(b,a\star c)=2\alpha_2(b\circ a,c)-2\alpha_2(a\circ b,c),\label{qlsa7}\\
&&\alpha_0(a\ast b,c)-\alpha_0(a,c\lhd b)+\alpha_0(b,a\star c)-\beta\alpha_1(a,c\lhd b)-\alpha_1(a,b\circ c)\label{qlsa8}\\
&&=\alpha_1(b\circ a,c)-\alpha_1(a\circ b,c),\nonumber\\
&&\alpha_0(a\circ b,c)-\beta\alpha_0(a,c\lhd b)-\alpha_0(a,b\circ c)=\alpha_0(b\circ a,c)-\beta\alpha_0(b,c\lhd a)-\alpha_0(b,a\circ c).\label{qlsa9}
\end{eqnarray}

\noindent(2) If $n>3$,
\begin{equation}
\begin{aligned}
\alpha_n(a\ast b,c)= \alpha_n(a, b\star c)=\alpha_n(a,b\lhd c)=\alpha_n(a,b\rhd c)=0, \;\;\;\text{for all $a,b,c\in V$.}
\end{aligned}
\end{equation}

(3) $\alpha_\lambda(\cdot,\cdot)$ and $\alpha'_\lambda(\cdot,\cdot)$ are equivalent if and only if there is a linear map $\phi$ : $V\rightarrow\mathbb{C}$ such that
\begin{equation}
\begin{aligned}
\alpha_\lambda(a,b)=\alpha'_\lambda(a,b)+\beta\phi(b\lhd a)+\lambda\phi(a\star b)+\phi(a\circ b), \;\;\; \text{for all $a,b\in V$.}
\end{aligned}
\end{equation}

\end{thm}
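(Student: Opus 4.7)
The plan is to substitute the explicit $\lambda$-product \eqref{lnd} into the cocycle conditions \eqref{2} and \eqref{4}, then extract the stated identities by comparing coefficients of $\lambda^i\mu^j$. Using conformal sesquilinearity \eqref{2} and $a_\lambda b=\partial(b\lhd a)+a\circ b+\lambda(a\star b)$, the four bilinear expressions that appear in \eqref{4} become
\begin{align*}
\alpha_{\lambda+\mu}(a_\lambda b,c)&=-(\lambda+\mu)\alpha_{\lambda+\mu}(b\lhd a,c)+\alpha_{\lambda+\mu}(a\circ b,c)+\lambda\alpha_{\lambda+\mu}(a\star b,c),\\
\alpha_\lambda(a,b_\mu c)&=(\lambda+\beta)\alpha_\lambda(a,c\lhd b)+\alpha_\lambda(a,b\circ c)+\mu\alpha_\lambda(a,b\star c),
\end{align*}
together with the two analogues obtained by swapping $a\leftrightarrow b$ and exchanging the roles of $\lambda$ and $\mu$. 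Plugging these into \eqref{4} converts the cocycle identity into a polynomial identity in $\lambda,\mu$ whose every monomial coefficient must vanish.

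For Part (1), I would write $\alpha_\lambda=\sum_{i=0}^{3}\lambda^i\alpha_i$ and expand each $(\lambda+\mu)^i=\sum_j\binom{i}{j}\lambda^{i-j}\mu^j$, then collect all contributions to a fixed monomial $\lambda^p\mu^q$ with $p+q\le 4$. The top-degree monomials $\lambda^4$ and $\lambda^3\mu$ together with their reflections produce the symmetry relations \eqref{qqlsa}; the coefficients of $\lambda^3$ and the mixed $\lambda^2\mu$-type monomials give \eqref{qlsa4} and \eqref{qlsa5}; the successively lower-degree coefficients $\lambda^2,\lambda\mu,\lambda,1$ yield \eqref{qlsa6}--\eqref{qlsa9}. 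Part (2) is the same expansion applied with $n>3$: reading off the top-degree monomials $\lambda^{n+1}$, $\lambda^n\mu$, and their reflections forces $\alpha_n(a\ast b,c)=\alpha_n(a,c\lhd b)=\alpha_n(a,b\star c)=0$, and the remaining vanishings $\alpha_n(a,b\lhd c)=\alpha_n(a,b\rhd c)=0$ then follow by combining these using $a\ast b=a\lhd b+a\rhd b$ and $b\star c=b\rhd c+c\lhd b$.

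Part (3) is a direct application of the equivalence criterion recalled immediately before the theorem: $\alpha_\lambda$ and $\alpha'_\lambda$ are equivalent iff there is a $\mathbb{C}[\partial]$-module homomorphism $\phi\colon R\to\mathbb{C}_\beta$ with $\alpha_\lambda(a,b)-\alpha'_\lambda(a,b)=\phi(a_\lambda b)$. Because $\partial$ acts on $\mathbb{C}_\beta$ by multiplication by $\beta$ and $\phi$ is determined by its restriction to $V$, one has $\phi(\partial x)=\beta\phi(x)$ for all $x\in V$. Applying this to $a_\lambda b=\partial(b\lhd a)+a\circ b+\lambda(a\star b)$ gives $\phi(a_\lambda b)=\beta\phi(b\lhd a)+\phi(a\circ b)+\lambda\phi(a\star b)$, which is precisely the stated formula.

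The main obstacle is the bookkeeping in Part (1): the $(\lambda+\mu)^i$ factors in $\alpha_{\lambda+\mu}$ spread across many monomials $\lambda^p\mu^q$, so each of the seven identities emerges only after carefully matching the contributions from all four terms of \eqref{4} and using the swap-$(a,b)$ cancellations to separate the truly independent relations from the redundant ones.
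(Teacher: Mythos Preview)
Your overall plan coincides with the paper's proof: substitute the quadratic $\lambda$-product into \eqref{4}, use \eqref{2}, and compare monomial coefficients in $\lambda,\mu$; Part~(3) is handled exactly as you describe. However, the specific monomials you single out are not quite the right ones, and as stated they do not yield the claimed identities.

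For Part~(1), the monomials $\lambda^4,\lambda^3\mu$ and their $(\lambda\!\leftrightarrow\!\mu)$ reflections give only $\alpha_3(a\ast b,c)=\alpha_3(a,c\lhd b)$ and a relation of the form $3\alpha_3(a\ast b,c)-\alpha_3(b\ast a,c)=\alpha_3(a,b\star c)$. The key symmetry $\alpha_3(a\ast b,c)=\alpha_3(b\ast a,c)$ in \eqref{qqlsa} comes from the \emph{interior} monomial $\lambda^2\mu^2$, which you omit; the paper explicitly reads off $\lambda^4,\lambda\mu^3,\lambda^2\mu^2$ to obtain three relations equivalent to \eqref{qqlsa}.

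The same issue affects Part~(2). The coefficients of $\lambda^{n+1}$ and $\lambda^n\mu$ (and their reflections) give, for all $a,b,c$, only
\[
\alpha_n(a\ast b,c)=\alpha_n(a,c\lhd b),\qquad n\,\alpha_n(a\ast b,c)-\alpha_n(b\ast a,c)=\alpha_n(a,b\star c),
\]
which do not force any vanishing. The decisive step in the paper is to compare the coefficients of $\lambda^2\mu^{n-1}$ and $\lambda^{n-1}\mu^2$, which yield
\[
n\,\alpha_n(a\ast b,c)=\tfrac{n(n-1)}{2}\,\alpha_n(b\ast a,c),\qquad \tfrac{n(n-1)}{2}\,\alpha_n(a\ast b,c)=n\,\alpha_n(b\ast a,c).
\]
For $n>3$ this $2\times 2$ system is nonsingular, giving $\alpha_n(a\ast b,c)=0$; only then do the $\lambda^{n+1}$ and $\lambda\mu^{n}$ coefficients give $\alpha_n(a,c\lhd b)=\alpha_n(b,a\star c)=0$, and hence $\alpha_n(a,b\rhd c)=0$. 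So your plan is correct in spirit, but you must include the interior degree-$(n{+}1)$ monomials (specifically $\lambda^2\mu^{n-1}$ and $\lambda^2\mu^2$) in your coefficient comparison; the ``extremal'' monomials alone are insufficient.
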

\begin{proof}
Note that by conformal sesquilinearity, the $\lambda$-products on $\widehat{R}$ are determined by those
$\widehat{a_\lambda b}$ for all $a$, $b\in V$. Let $a$, $b$, $c\in V$.
By \eqref{2}, \eqref{4} becomes

\begin{equation}
\begin{aligned}
&-\mu\alpha_{\lambda+\mu}(b\lhd a,c)+\lambda\alpha_{\lambda+\mu}(a\rhd b,c)+\alpha_{\lambda+\mu}(a\circ b,c)\\
&-(\lambda+\beta)\alpha_{\lambda}(a,c\lhd b)-\mu\alpha_{\lambda}(a,b\star c)-\alpha_{\lambda}(a,b\circ c)\\
&=-\lambda\alpha_{\lambda+\mu}(a\lhd b,c)+\mu\alpha_{\lambda+\mu}(b\rhd a,c)+\alpha_{\lambda+\mu}(b\circ a,c)\\
&-(\mu+\beta)\alpha_{\mu}(b,c\lhd a)-\lambda\alpha_{\mu}(b,a\star c)-\alpha_{\mu}(b,a\circ c).\label{qls1}
\end{aligned}
\end{equation}
Note that  $\alpha_\lambda(a,b)=\sum_{i=0}^{n}\lambda^i\alpha_i(a,b)$. If $n>3$, comparing the coefficients of $\lambda^2\mu^{n-1}$ and $\lambda^{n-1}\mu^2$,
we get
$$
\begin{aligned}
&n\alpha_n(a\ast b,c)-\frac{n(n-1)}{2}\alpha_n(b\ast a,c)=0,\\
&\frac{n(n-1)}{2}\alpha_n(a\ast b,c)-n\alpha_n(b\ast a,c)=0.
\end{aligned}
$$
Since $n>3$, we obtain $\alpha_n(a\ast b,c)=0 $ for all $a,b,c\in V$. Then by comparing coefficients of $\lambda\mu^{n}$ and $\lambda^{n+1}$, we obtain that for all $a,b,c\in V$,
$$\alpha_n(b,a\star c)=\alpha_n(a,c\lhd b)=0.$$
Therefore, $\alpha_n(a,b\rhd c)=0$ for all $a,b,c\in V$.

If $n\leq3$, taking $\alpha_\lambda(a,b)=\sum_{i=0}^{3}\lambda^i\alpha_i(a,b)$ into \eqref{qls1} and comparing the coefficients of
$\lambda^4$, $\lambda\mu^3$, $\lambda^2\mu^2$, $\lambda^3$, $\lambda^2\mu$, $\lambda^2$, $\lambda\mu$, $\lambda$ and $\lambda^0\mu^0$, we get \eqref{qlsa4}-\eqref{qlsa9} and
\begin{eqnarray}
&&\alpha_3(a\rhd b,c)-\alpha_3(a,c\lhd b)=-\alpha_3(a\lhd b,c),\label{qlsa1}\\
&&-3\alpha_3(b\lhd a,c)+\alpha_3(a\rhd b,c)=-\alpha_3(a\lhd b,c)+3\alpha_3(b\rhd a,c)-\alpha_3(b,a\star c),\\
&&-3\alpha_3(b\lhd a,c)+3\alpha_3(a\rhd b,c)=-3\alpha_3(a\lhd b,c)+3\alpha_3(b\rhd a,c).\label{qlsa3}
\end{eqnarray}
It is straightforward to check that (\ref{qlsa1})-(\ref{qlsa3}) are equivalent to (\ref{qqlsa}). Therefore, in this case, $\alpha_\lambda(\cdot,\cdot)$ is a 2-cocycle if and only if \eqref{qqlsa}-\eqref{qlsa9} hold.

Note that a $\mathbb{C}\left[\partial\right]$-module homomorphism $\varphi$: $R\rightarrow \mathbb{C}_\beta$ can be determined by the restricted linear map
$\varphi|_V$: $V\rightarrow \mathbb{C}_\beta$. Therefore, $\alpha(\cdot,\cdot)$ and $\alpha'(\cdot,\cdot)$ are equivalent if and only if there is a $\mathbb{C}$-linear
map $\phi$: $V\rightarrow \mathbb{C}_\beta$ such that
\begin{eqnarray*}
\alpha_\lambda(a,b)=\alpha'_\lambda(a,b)+\phi(a_\lambda b)=\alpha'_\lambda(a,b)+\beta\phi(b\lhd a)+\lambda\phi(a\star b)+\phi(a\circ b), \;\;\text{for all $a,b\in V$.}
\end{eqnarray*}

The proof is completed.
\end{proof}

\begin{rmk}
It should be pointed out that for a quadratic left-symmetric conformal algebra $R=\mathbb{C}[\partial]V$ when $V$ is infinite-dimensional, there may not exist an upper bound $n$ such that   $\alpha_\lambda(a,b)=\sum\limits_{i=0}^{n}\lambda^i\alpha_i(a,b)$ for all $a,b\in V$.
\end{rmk}
\begin{cor}\label{lsndc1}
Let $(V,\lhd,\rhd,\circ )$ be a finite-dimensional pre-Gel'fand-Dorfman algebra with
$V = V\ast V$ or $V=V\star V$ or $V=V\lhd V$ or $V=V\rhd V$. Let $R = \mathbb{C}\left[\partial\right]V $ be the corresponding quadratic left-symmetric conformal algebra. Suppose
that $\widehat{R} = R\oplus \mathbb{C}\mathfrak{c}_\beta$ be a central extension of $(R, \cdot_\lambda\cdot)$ with the $\lambda$-products defined by \eqref{lsab}.
Then for all $a, b \in V$, $\alpha_\lambda(a, b) = \sum\limits^{3}_{i=0}\lambda^i\alpha_i(a, b)$, in which all
$\alpha_i(\cdot, \cdot):$  $V \times V \rightarrow \mathbb{C} $ are bilinear forms satisfying \eqref{qqlsa}-\eqref{qlsa9}.
Furthermore, $\alpha_\lambda(\cdot, \cdot)$ and $\alpha'_\lambda(\cdot, \cdot)$ are equivalent if and only if $\alpha_i(\cdot, \cdot) = \alpha'_i(\cdot, \cdot)$ for $i=2, 3$ and there is a linear map $\phi:$  $ V \rightarrow\mathbb{C}$
such that
\begin{align}
&\alpha_1(a,b)=\alpha'_1(a,b)+\phi(a\rhd b+b\lhd a),\\
&\alpha_0(a,b)=\alpha'_0(a,b)+\beta\phi(b\lhd a)+\phi(a\circ b),\;\;\text{ for all $a, b \in V$.}
\end{align}
\end{cor}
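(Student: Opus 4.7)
The plan is to combine the general structural result Theorem \ref{lsndt1} with the surjectivity hypothesis on $V$ to prove both claims in a single pass. First, the degree bound on $\alpha_\lambda$ will be obtained by reducing the question to a statement about the leading coefficient $\alpha_n$, and then invoking Theorem \ref{lsndt1}(2) to force $\alpha_n$ to vanish under any one of the four surjectivity conditions. The equivalence criterion will follow by coefficient comparison in the equivalence formula of Theorem \ref{lsndt1}(3).

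Concretely, since $V$ is finite-dimensional, I would fix a basis $e_1,\ldots,e_d$ of $V$. Each value $\alpha_\lambda(e_i,e_j)\in\mathbb{C}[\lambda]$ is a polynomial of some finite degree, and since there are only finitely many pairs $(i,j)$, the integer
\begin{equation*}
n := \max_{i,j}\deg\alpha_\lambda(e_i,e_j)
\end{equation*}
is well-defined and finite. By $\mathbb{C}$-bilinearity, one then has $\alpha_\lambda(a,b) = \sum_{i=0}^n \lambda^i \alpha_i(a,b)$ for every $a,b\in V$, with $\alpha_i\colon V\times V\to\mathbb{C}$ bilinear. Suppose for contradiction that $n>3$. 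Then Theorem \ref{lsndt1}(2) yields
\begin{equation*}
\alpha_n(a\ast b, c) = \alpha_n(a, b\star c) = \alpha_n(a, b\lhd c) = \alpha_n(a, b\rhd c) = 0
\end{equation*}
for all $a,b,c\in V$. Under any one of the four hypotheses $V = V\ast V$, $V = V\star V$, $V = V\lhd V$, or $V = V\rhd V$, the corresponding identity forces $\alpha_n$ to vanish on all of $V\times V$, contradicting the maximality of $n$. Hence $n\leq 3$, and the cocycle relations \eqref{qqlsa}-\eqref{qlsa9} follow directly from Theorem \ref{lsndt1}(1).

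For the equivalence criterion, Theorem \ref{lsndt1}(3) asserts that $\alpha_\lambda(\cdot,\cdot)$ and $\alpha'_\lambda(\cdot,\cdot)$ are equivalent iff there is a linear map $\phi\colon V\to\mathbb{C}$ with
\begin{equation*}
\alpha_\lambda(a,b)-\alpha'_\lambda(a,b) = \beta\phi(b\lhd a) + \lambda\phi(a\star b) + \phi(a\circ b).
\end{equation*}
Since both sides are polynomials in $\lambda$ and the right-hand side has $\lambda$-degree at most $1$, comparing coefficients of $\lambda^i$ on $\alpha_\lambda - \alpha'_\lambda = \sum_{i=0}^{3}\lambda^i(\alpha_i-\alpha'_i)$ forces $\alpha_i = \alpha'_i$ for $i=2,3$, together with the asserted formulas for $\alpha_0$ and $\alpha_1$. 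I do not anticipate a genuine obstacle here: the only mildly subtle point is using finite-dimensionality of $V$ to promote the pointwise polynomial bound of Theorem \ref{lsndt1}(2) into a uniform degree bound $n$, after which everything reduces to coefficient comparison and the $\mathbb{C}$-bilinear extension of the vanishing $\alpha_n(\cdot,\cdot)$ from generating subsets to all of $V\times V$.
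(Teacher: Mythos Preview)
Your proposal is correct and follows essentially the same approach as the paper's proof, which simply notes that finite-dimensionality of $V$ gives a uniform degree bound $n$ and then invokes Theorem \ref{lsndt1}. Your version is more explicit in spelling out why the surjectivity hypothesis together with Theorem \ref{lsndt1}(2) forces $n\le 3$ and in unpacking the coefficient comparison for the equivalence criterion, but the underlying argument is identical.
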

\begin{proof}
Since $R = \mathbb{C}\left[\partial\right]V$ is  finite, there exists some $n\in \ZZ_+$ such that
$\alpha_\lambda(a,b) =\sum\limits_{i=0}^{n} \lambda^i\alpha_i(a,b)$ for all $a, b \in V$. Then this corollary follows directly from Theorem \ref{lsndt1}.
\end{proof}

\begin{cor}\label{corleft}
Let $(V,\cdot, \circ )$ be a finite-dimensional left-symmetric Poisson algebra with $V=V\cdot V$ and $R=\mathbb{C}\left[\partial\right]V$ be the
corresponding quadratic left-symmetric conformal algebra. Suppose $\widehat{R}=R\oplus \mathbb{C}\mathfrak{c}_\beta$ be a central extension of $(R,\cdot_\lambda\cdot)$
with the $\lambda$-products defined by
\begin{equation}
\widehat{a_\lambda b}=\partial(a\cdot b)+a\circ b+\lambda(a\cdot b)+\alpha_\lambda(a,b)\mathfrak{c}_\beta,
\end{equation}
where $a,b\in V$ and $\alpha_\lambda(a,b)\in\mathbb{C}\left[\lambda\right] $. Then for all $a,b\in V$, $\alpha_\lambda(a,b)=\sum\limits_{i=0}^{2}\lambda^i\alpha_i(a,b)$ ,
where all $\alpha_i(\cdot, \cdot) :$  $ V\times V\rightarrow \mathbb{C}$ are bilinear forms satisfying
\begin{eqnarray}
&&\alpha_2(a\cdot b,c)=\alpha_2(a,c\cdot b),\label{corl2}\\
&&\alpha_2(a\circ b,c)-\alpha_1(a,c\cdot b)- \beta\alpha_2(a,c\cdot b)-\alpha_2(a,b\circ c)=-\alpha_1(a\cdot b,c)+\alpha_2(b\circ a,c),\label{corl3}\\
&&2\alpha_2(a\circ b,c)-\alpha_1(a,c\cdot b)=2\alpha_2(b\circ a,c)
-\alpha_1(b,c\cdot a),\label{corl4}\\
&&\alpha_1(a\circ b,c)-\alpha_0(a,c\cdot b)-\beta\alpha_1(a,c\cdot b)-\alpha_1(a,b\circ c)\label{corl5}\\
&&=-\alpha_0(a\cdot b,c)
+\alpha_1(b\circ a,c)-\alpha_0(b,c\cdot a),\nonumber\\
&&\alpha_0(a\circ b,c)-\beta\alpha_0(a,c\cdot b)-\alpha_0(a,b\circ c)=\alpha_0(b\circ a,c)-\beta\alpha_0(b,c\cdot a)-\alpha_0(b,a\circ c),\label{corl6}
\end{eqnarray}
for all $a,b,c\in V$. \delete{Particularly, when $\beta=0$, \eqref{corl3}-\eqref{corl6} are identical with
\begin{align}
&\alpha_2(a\circ b,c)-\alpha_1(a,c\lhd b)-\alpha_2(a,b\circ c)=-\alpha_1(a\lhd b,c)
+\alpha_2(b\circ a,c),\label{corl03}\\
&-\alpha_1(b\lhd a,c)+2\alpha_2(a\circ b,c)-\alpha_1(a,c\lhd b)=-\alpha_1(a\lhd b,c)+2\alpha_2(b\circ a,c)
-\alpha_1(b,c\lhd a),\label{corl04}\\
&\alpha_1(a\circ b,c)-\alpha_0(a,c\lhd b)-\alpha_1(a,b\circ c)=-\alpha_0(a\lhd b,c)
+\alpha_1(b\circ a,c)-\alpha_0(b,c\lhd a),\label{corl05}\\
&\alpha_0(a\circ b,c)-\alpha_0(a,b\circ c)=\alpha_0(b\circ a,c)-\alpha_0(b,a\circ c),\label{corl06}
\end{align}
for any $a,b,c\in V$. } Moreover, $\alpha_\lambda(\cdot,\cdot)$ is equivalent to $\alpha'_\lambda(\cdot,\cdot)$ if and only if $\alpha_2(\cdot,\cdot)=\alpha'_2(\cdot,\cdot)$ and
there is a linear map $\phi:$ $V\rightarrow\mathbb{C}$ such that
\begin{align}
&\alpha_1(a,b)=\alpha'_1(a,b)+\phi(a\cdot b),\\
&\alpha_0(a,b)=\alpha'_0(a,b)+\beta\phi( a \cdot b)+\phi(a\circ b),\;\;\text{ for all $a, b \in V$.}
\end{align}

\end{cor}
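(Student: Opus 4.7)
The plan is to deduce this corollary as a direct specialization of Corollary \ref{lsndc1}. For a left-symmetric Poisson algebra $(V,\cdot,\circ)$, the associated pre-Gel'fand-Dorfman structure (by the earlier proposition) is given by $a\lhd b=a\cdot b$ and $a\rhd b=0$, so the derived operations become $a\ast b=a\cdot b$ and $a\star b=b\cdot a=a\cdot b$ by the commutativity of ``$\cdot$''. The hypothesis $V=V\cdot V$ therefore implies $V=V\ast V$, so Corollary \ref{lsndc1} applies and gives the expansion $\alpha_\lambda(a,b)=\sum_{i=0}^{3}\lambda^i\alpha_i(a,b)$ with the $\alpha_i$ satisfying \eqref{qqlsa}--\eqref{qlsa9}.

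The first and only substantive step is to reduce the degree from $3$ to $2$, i.e.\ to show $\alpha_3\equiv 0$. This follows immediately from the last equality in \eqref{qqlsa}: since $a\rhd c=0$, we obtain $\alpha_3(a\ast b,c)=\alpha_3(b,a\rhd c)=0$ for all $a,b,c\in V$, that is $\alpha_3(a\cdot b,c)=0$. Using $V=V\cdot V$, every element of $V$ is a linear combination of products $a\cdot b$, hence $\alpha_3=0$ on all of $V\times V$.

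Once $\alpha_3=0$ is established, the remaining identities \eqref{corl2}--\eqref{corl6} are obtained by rewriting \eqref{qlsa4}--\eqref{qlsa9} with the substitutions $\lhd\mapsto\cdot$, $\rhd\mapsto 0$, $\ast\mapsto\cdot$, $\star\mapsto\cdot$, and using commutativity of ``$\cdot$'' to identify terms like $c\cdot b$ with $b\cdot c$ and $a\cdot c$ with $c\cdot a$. Specifically, \eqref{qlsa4} reduces to \eqref{corl2}, \eqref{qlsa5} becomes a consequence of \eqref{corl2} (hence carries no new information), \eqref{qlsa6} gives \eqref{corl3}, \eqref{qlsa7} rearranges to \eqref{corl4}, \eqref{qlsa8} yields \eqref{corl5}, and \eqref{qlsa9} is exactly \eqref{corl6}.

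For the equivalence statement, we specialize the equivalence condition of Corollary \ref{lsndc1}. Since $\alpha_3=\alpha'_3=0$ automatically, only the matching of the degree-$2$ parts remains as a nontrivial constraint, giving $\alpha_2(\cdot,\cdot)=\alpha'_2(\cdot,\cdot)$. The relations for $\alpha_1$ and $\alpha_0$ are obtained by substituting $a\rhd b+b\lhd a=a\cdot b$ and $b\lhd a=a\cdot b$ into the two displayed formulas in Corollary \ref{lsndc1}. There is no real obstacle here; the only point requiring care is the consistent use of commutativity of ``$\cdot$'' to match the various expressions, and verifying that the redundancy coming from \eqref{qlsa5} is indeed absorbed by \eqref{corl2}.
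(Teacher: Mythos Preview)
Your proof is correct and follows exactly the same approach as the paper, which simply states that the result is straightforward from Corollary~\ref{lsndc1}. Your write-up in fact supplies the details the paper omits: the vanishing of $\alpha_3$ via \eqref{qqlsa} and $V=V\cdot V$, and the line-by-line specialization of \eqref{qlsa4}--\eqref{qlsa9} and of the equivalence conditions under $\lhd\mapsto\cdot$, $\rhd\mapsto 0$.
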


\begin{proof}
It is straightforward from Corollary \ref{lsndc1}.
\end{proof}

\begin{cor}\label{corollaryb}
Let $(V,\lhd,\rhd,\circ)$ be a pre-Gel'fand-Dorfman algebra  with $V=V\ast V$ or $V=V\star V$ or $V=V\lhd V$ or $V=V\rhd V$ where $a\circ b=k(a\rhd b-b\lhd a)$ for all $a$, $b\in V$ and some fixed $k\in \mathbb{C}$. Let $R=\mathbb{C}\left[\partial\right]V$ be the
corresponding quadratic left-symmetric conformal algebra and $\widehat{R}=R\oplus \mathbb{C}\mathfrak{c}_\beta$ be a central extension of $(R,\cdot_\lambda\cdot)$
with the $\lambda$-products defined by
\begin{equation}\label{bracket-pre-gd}
\widehat{a_\lambda b}=\partial(b\lhd a)+k(a\rhd b-b\lhd a)+\lambda(a\star b)+\alpha_\lambda(a,b)\mathfrak{c}_\beta, \;\;\text{for all $a$, $b\in V$,}
\end{equation}
where $\alpha_\lambda(a,b)\in\mathbb{C}\left[\lambda\right] $. Then for all $a,b\in V$, $\alpha_\lambda(a,b)=\sum\limits_{i=0}^{3}\lambda^i\alpha_i(a,b) $ ,
where all $\alpha_i(\cdot, \cdot) :$  $ V\times V\rightarrow \mathbb{C}$ are bilinear forms satisfying \eqref{qqlsa}-\eqref{qlsa9} where  $a\circ b=k(a\rhd b-b\lhd a)$ for all $a$, $b\in V$.

\delete{\begin{eqnarray}
&&\alpha_2(a\ast b,c)-\alpha_2(a,c\lhd b)=k[\alpha_3(b*a-a*b,c)+\alpha_3(a,b\rhd c)]+(\beta-k)\alpha_3(a,c\lhd b),\label{b1}\\
&&2\alpha_2(a\ast b,c)-\alpha_2(b\ast a,c)-\alpha_2(a,b\star c)=3k[\alpha_3(b*a-a*b,c)+\alpha_3(a,b\rhd c)],\label{b2}\\
&&\alpha_1(a\ast b,c)-\alpha_1(a,c\lhd b)=k[\alpha_2(b*a-a*b,c)+\alpha_2(a,b\rhd c)]+(\beta-k)\alpha_2(a,c\lhd b),\label{b3}\\
&&\alpha_1(a\ast b,c)-\alpha_1(b\ast a,c)-\alpha_1(a,b\star c)+\alpha_1(b,a\star c)=2k\alpha_2(b*a-a*b,c),\label{b4}\\
&&\alpha_0(a\ast b,c)-\alpha_0(a,c\lhd b)+\alpha_0(b,a\star c)=k[\alpha_1(b*a-a*b,c)+\alpha_1(a,b\rhd c)]\label{b5}\\
&&+(\beta-k)\alpha_1(a,c\lhd b),\nonumber\\
&&k[\alpha_0(a*b-b*a,c)+\alpha_0(b,a\rhd c)-\alpha_0(a,b\rhd c)]=(\beta-k)[\alpha_0(a,c\lhd b)-\alpha_0(b,c\lhd a)].\label{b6}
\end{eqnarray}
for all $a,b,c\in V$ and $k\in\mathbb{C}$. Particularly, when $\beta=0$, \eqref{b1}-\eqref{b6} are identical with
\begin{align}
&\alpha_2(a\ast b,c)-\alpha_2(a,c\lhd b)=k[\alpha_3(b*a-a*b,c)+\alpha_3(a,b\rhd c-c\lhd b)],\label{b0001}\\
&2\alpha_2(a\ast b,c)-\alpha_2(b\ast a,c)-\alpha_2(a,b\star c)=3k[\alpha_3(b*a-a*b,c)+\alpha_3(a,b\rhd c)],\label{b0002}\\
&\alpha_1(a\ast b,c)-\alpha_1(a,c\lhd b)=k[\alpha_2(b*a-a*b,c)+\alpha_2(a,b\rhd c-c\lhd b)],\label{b0003}\\
&\alpha_1(a\ast b,c)-\alpha_1(b\ast a,c)-\alpha_1(a,b\star c)+\alpha_1(b,a\star c)=2k\alpha_2(b*a-a*b,c),\label{b0004}\\
&\alpha_0(a\ast b,c)-\alpha_0(a,c\lhd b)+\alpha_0(b,a\star c)=k[\alpha_1(b*a-a*b,c)+\alpha_1(a,b\rhd c-c\lhd b)],\label{b0005}\\
&k[\alpha_0(a*b-b*a,c)+\alpha_0(b,a\rhd c-c\lhd a)-\alpha_0(a,b\rhd c-c\lhd b)]=0.\label{b0006}
\end{align}
for all $a,b,c\in V$ and $k\in\mathbb{C}$. Moreover, $\alpha_\lambda(\cdot,\cdot)$ is equivalent to $\alpha'_\lambda(\cdot,\cdot)$ if and only if $\alpha_i(a,b)=\alpha'_i(a,b)$ for $a,b\in V$, $i=2,3$, and
there is a linear map $\phi:$ $V\rightarrow\mathbb{C}$ such that
\begin{align}
&\alpha_1(a,b)=\alpha'_1(a,b)+\phi(a\star b),\\
&\alpha_0(a,b)=\alpha'_0(a,b)+\beta\phi(b\lhd a)+\phi(a\rhd b-b\lhd a).
\end{align}}

\end{cor}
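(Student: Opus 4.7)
The plan is to deduce Corollary~\ref{corollaryb} as a direct specialization of Theorem~\ref{lsndt1}, with the four generation hypotheses on $V$ replacing the finite-dimensionality that was used in Corollary~\ref{lsndc1}. First, I would observe that when $a\circ b=k(a\rhd b-b\lhd a)$, Proposition~\ref{Cons-pre-GD} guarantees that $(V,\lhd,\rhd,\circ)$ is indeed a pre-Gel'fand-Dorfman algebra, so the quadratic left-symmetric conformal algebra $R=\mathbb{C}[\partial]V$ is well-defined, and Theorem~\ref{lsndt1} applies to any central extension $\widehat R$ whose $\lambda$-products are of the form \eqref{bracket-pre-gd}.

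The main step is to establish the global degree bound $\alpha_\lambda(a,b)=\sum_{i=0}^{3}\lambda^i\alpha_i(a,b)$ in the absence of any finite-dimensionality assumption. For each $i\geq 0$, let $\alpha_i(\cdot,\cdot)\colon V\times V\to\mathbb{C}$ denote the bilinear form obtained as the coefficient of $\lambda^i$ in $\alpha_\lambda$; since $\alpha_\lambda(a,b)\in\mathbb{C}[\lambda]$, for each fixed pair $(a,b)$ only finitely many $\alpha_i(a,b)$ are nonzero. I would then apply Theorem~\ref{lsndt1}(2) separately for each integer $n\geq 4$ to conclude that the bilinear form $\alpha_n$ vanishes on $(V\ast V)\times V$, on $V\times (V\star V)$, on $V\times (V\lhd V)$, and on $V\times (V\rhd V)$. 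Under any one of the four hypotheses $V=V\ast V$, $V=V\star V$, $V=V\lhd V$, or $V=V\rhd V$, this forces $\alpha_n\equiv 0$ as a bilinear form on $V\times V$ for every $n\geq 4$. Consequently, $\alpha_\lambda(a,b)$ has degree at most $3$ in $\lambda$ for all $a,b\in V$.

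Once the global degree bound is in place, Theorem~\ref{lsndt1}(1) applies verbatim and yields that the four bilinear forms $\alpha_0,\alpha_1,\alpha_2,\alpha_3$ satisfy \eqref{qqlsa}-\eqref{qlsa9}. Substituting the defining relation $a\circ b=k(a\rhd b-b\lhd a)$ into those identities produces exactly the specialized cocycle conditions the corollary asserts, and there is nothing further to compute.

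The only delicate point of the argument is the degree-bound step: one must read Theorem~\ref{lsndt1}(2) as a family of identities holding for every index $n\geq 4$, not merely for the single top nonzero degree, so that no a priori uniform bound on the degrees of $\{\alpha_\lambda(a,b)\}_{a,b\in V}$ needs to be assumed (which is precisely where Corollary~\ref{lsndc1} used $\dim V<\infty$). Once this is granted, everything else is a direct transcription from Theorem~\ref{lsndt1}, with the hypothesis $a\circ b=k(a\rhd b-b\lhd a)$ plugged in at the end.
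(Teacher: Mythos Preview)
Your overall strategy is right, but the degree-bound step --- which you correctly flag as the delicate point --- is not actually justified by invoking Theorem~\ref{lsndt1}(2). That part of the theorem is proved under the hypothesis that $\alpha_\lambda(a,b)=\sum_{i=0}^n\lambda^i\alpha_i(a,b)$ for \emph{all} $a,b\in V$ with a single global $n$; its proof isolates $\alpha_n$ by comparing coefficients of $\lambda^2\mu^{n-1}$ and $\lambda^{n-1}\mu^2$ in \eqref{qls1}, and those coefficients would pick up contributions from $\alpha_{n+1}$ if higher-degree terms were present. So you cannot simply ``apply it for each $n\ge 4$'' when no global top degree exists, and saying ``once this is granted'' leaves the gap open.

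The paper closes this gap by a localization argument rather than by reinterpreting Theorem~\ref{lsndt1}(2). It first specializes \eqref{qls1} using $a\circ b=k(a\rhd b-b\lhd a)$ to obtain \eqref{qqls}. The key observation is that for \emph{fixed} $a,b,c\in V$, only finitely many values of $\alpha_\lambda(\cdot,\cdot)$ occur in \eqref{qqls} (each argument is $a$, $b$, $c$, or a specific product of two of them), so there is a \emph{local} top degree $n$. The coefficient comparison from the proof of Theorem~\ref{lsndt1} then applies at that local top degree, killing $\alpha_n$ on those particular product arguments; one repeats for $n-1,n-2,\ldots$ down to $4$. Ranging over all triples $(a,b,c)$ yields $\alpha_m(a\ast b,c)=\alpha_m(a,b\star c)=\alpha_m(a,b\lhd c)=\alpha_m(a,b\rhd c)=0$ for every $m>3$ and all $a,b,c$, and any one of the four generation hypotheses then forces $\alpha_m\equiv 0$. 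After that, Theorem~\ref{lsndt1}(1) finishes the proof exactly as you outline.
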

\begin{proof}
In this case, (\ref{qls1}) becomes
\begin{equation}
\begin{aligned}
&-\mu\alpha_{\lambda+\mu}(b\ast a,c)+\lambda\alpha_{\lambda+\mu}(a\ast b,c)+k\alpha_{\lambda+\mu}(a\ast b-b\ast a,c)\\
&-(\lambda+\beta)\alpha_{\lambda}(a,c\lhd b)-\mu\alpha_{\lambda}(a,b\star c)-k\alpha_{\lambda}(a,b\rhd c-c\lhd b)\\
&=-(\mu+\beta)\alpha_{\mu}(b,c\lhd a)-\lambda\alpha_{\mu}(b,a\star c)-k\alpha_{\mu}(b,a\rhd c-c\lhd a).\label{qqls}
\end{aligned}
\end{equation}
Set $\alpha_\lambda(a,b)=\sum_{i=0}^{n_{a,b}}\lambda^i\alpha_i(a,b)$ for any $a$, $b\in V$, where $n_{a,b}$ is a nonnegative integer depending on $a$ and $b$.
In this case,
for fixed $a$, $b$, $c\in V$, we can assume that there exists an upper bound for the degrees of all
$\alpha_\lambda(\cdot,\cdot)$ appearing in (\ref{qqls}). Therefore, we assume that $\alpha_\lambda(\cdot,\cdot)=\sum_{i=0}^n\lambda^i\alpha_i(\cdot, \cdot)$ for all $\alpha_\lambda(\cdot,\cdot)$ appearing in (\ref{qqls}).
Then similar to the discussion given in the proof of Theorem \ref{lsndt1}, we have
$\alpha_n(a\ast b,c)=\alpha_n(b\ast a,c)=\alpha_n(a, b\star c)=\alpha_n(b, a\star c)=\alpha_n(a, b\rhd c)=\alpha_n(a,c\lhd b)=\alpha_n(b, c\lhd a)=\alpha_n(b, a\rhd c)=0$ if $n>3$. Then we also get $\alpha_m(a\ast b,c)=\alpha_m(b\ast a,c)=\alpha_m(a, b\star c)=\alpha_m(b, a\star c)=\alpha_m(a, b\rhd c)=\alpha_m(a,c\lhd b)=\alpha_m(b, c\lhd a)=\alpha_m(b, a\rhd c)=0$ for all $n\geq m>3$ by repeating this process.
Therefore, $\alpha_m(a\ast b,c)= \alpha_m(a, b\star c)=\alpha_m(a,b\lhd c)=\alpha_m(a,b\rhd c)=0$ for all $a$, $b$, $c\in V$ and $m>3$. Then by $V=V\ast V$ or $V=V\star V$ or $V=V\lhd V$ or $V=V\rhd V$, we can assume that
$\alpha_\lambda(a,b)=\sum_{i=0}^3\lambda^i\alpha_i(a,b)$. Then it follows directly from Theorem \ref{lsndt1}.
\end{proof}

\begin{rmk}
Note that Corollary \ref{corollaryb} also holds when $V$ is infinite-dimensional.
\end{rmk}
\begin{cor}\label{corollaryb0}
Let $(V,\lhd,\rhd)$ be a pre-Novikov algebra with $V=V\ast V$ or $V=V\star V$ or $V=V\lhd V$ or $V=V\rhd V$. Let $R=\mathbb{C}\left[\partial\right]V$ be the
corresponding quadratic left-symmetric conformal algebra and $\widehat{R}=R\oplus \mathbb{C}\mathfrak{c}_\beta$ be a central extension of $(R,\cdot_\lambda\cdot)$
with the $\lambda$-products defined by
\begin{equation}\label{bracket-pre-Novikov}
\widehat{a_\lambda b}=\partial(b\lhd a)+\lambda(a\star b)+\alpha_\lambda(a,b)\mathfrak{c}_\beta,\;\; \text{for all $a$, $b\in V$,}
\end{equation}
where $\alpha_\lambda(a,b)\in\mathbb{C}\left[\lambda\right] $. Then for all $a,b\in V$, $\alpha_\lambda(a,b)=\sum\limits_{i=0}^{3}\lambda^i\alpha_i(a,b) $ ,
where all $\alpha_i(\cdot, \cdot) :$  $ V\times V\rightarrow \mathbb{C}$ are bilinear forms satisfying \eqref{qqlsa} and
\begin{eqnarray}
&&\alpha_2(a\ast b,c)-\alpha_2(a,c\lhd b)-\beta\alpha_3(a,c\lhd b)=0,\label{b01}\\
&&2\alpha_2(a\ast b,c)-\alpha_2(b\ast a,c)-\alpha_2(a,b\star c)=0,\label{b02}\\
&&\alpha_1(a\ast b,c)-\alpha_1(a,c\lhd b)- \beta\alpha_2(a,c\lhd b)=0,\label{b03}\\
&&\alpha_1(a\ast b,c)-\alpha_1(b\ast a,c)-\alpha_1(a,b\star c)+\alpha_1(b,a\star c)=0,\label{b04}\\
&&\alpha_0(a\ast b,c)-\alpha_0(a,c\lhd b)+\alpha_0(b,a\star c)-\beta\alpha_1(a,c\lhd b)=0,\label{b05}\\
&&\beta\alpha_0(a,c\lhd b)=\beta\alpha_0(b,c\lhd a),\label{b06}
\end{eqnarray}
for all $a,b,c\in V$. In particualr, when $\beta=0$, \eqref{b01}-\eqref{b06} are identical with
\begin{align}
&\alpha_2(a\ast b,c)=\alpha_2(a,c\lhd b),\label{b001}\\
&\alpha_2(a\ast b,c)=\alpha_2(b\ast a,c)+\alpha_2(a,b\rhd c),\label{b002}\\
&\alpha_1(a\ast b,c)=\alpha_1(a,c\lhd b),\label{b003}\\
&\alpha_1(a,b\rhd c)=\alpha_1(b,a\rhd c),\label{b004}\\
&\alpha_0(a\ast b,c)-\alpha_0(a,c\lhd b)+\alpha_0(b,a\star c)=0,\label{b005}
\end{align}
for all $a,b,c\in V$. Moreover, $\alpha_\lambda(\cdot,\cdot)$ is equivalent to $\alpha'_\lambda(\cdot,\cdot)$ if and only if $\alpha_i(\cdot,\cdot)=\alpha'_i(\cdot,\cdot)$ for $i=2,3$, and
there is a linear map $\phi:$ $V\rightarrow\mathbb{C}$ such that
\begin{align}
&\alpha_1(a,b)=\alpha'_1(a,b)+\phi(a\star b),\\
&\alpha_0(a,b)=\alpha'_0(a,b)+\beta\phi(b\lhd a),\;\;\text{ for all $a, b \in V$.}
\end{align}
\end{cor}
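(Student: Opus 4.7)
The plan is to obtain this corollary as the specialization of Corollary \ref{corollaryb} at the parameter value $k=0$. First I would observe that the identically zero product $a\circ b=0$ is trivially left-symmetric, and together with the pre-Novikov operations $\lhd$, $\rhd$ it forms a pre-Gel'fand-Dorfman algebra (one checks \eqref{lnd1}--\eqref{lnd2} directly, or applies Proposition \ref{Cons-pre-GD} with $k=0$). With this interpretation, the $\lambda$-product \eqref{bracket-pre-Novikov} is precisely the $k=0$ case of \eqref{bracket-pre-gd}, so the hypotheses of Corollary \ref{corollaryb} are met. That corollary then delivers at once that $\alpha_\lambda(a,b)=\sum_{i=0}^{3}\lambda^i\alpha_i(a,b)$ and that the $\alpha_i$ are bilinear forms subject to \eqref{qqlsa}--\eqref{qlsa9}. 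Substituting $a\circ b=0$ (and so also $b\circ a=0$, $a\circ c=0$, $b\circ c=0$) in each of \eqref{qlsa4}--\eqref{qlsa9} collapses these six identities to \eqref{b01}--\eqref{b06} in the stated order, while \eqref{qqlsa} is untouched. The equivalence formulas in Corollary \ref{corollaryb0} likewise fall out of the equivalence assertion of Corollary \ref{corollaryb} after deleting the $\phi(a\circ b)$ contribution.

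It then remains to verify the $\beta=0$ simplifications. Setting $\beta=0$ in \eqref{b01} and \eqref{b03} gives \eqref{b001} and \eqref{b003} respectively, and \eqref{b06} becomes vacuous. To pass from \eqref{b02} to \eqref{b002} I would expand $b\star c=b\rhd c+c\lhd b$ in \eqref{b02} and substitute $\alpha_2(a,c\lhd b)=\alpha_2(a\ast b,c)$, available from \eqref{b001}; the two copies of $\alpha_2(a\ast b,c)$ combine to produce \eqref{b002}. The analogous manipulation in \eqref{b04}, using \eqref{b003} to rewrite $\alpha_1(a,c\lhd b)=\alpha_1(a\ast b,c)$ and $\alpha_1(b,c\lhd a)=\alpha_1(b\ast a,c)$, cancels the $\ast$-terms and leaves \eqref{b004}. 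Finally \eqref{b05} becomes \eqref{b005} by dropping the $\beta\alpha_1$ term. The equivalence claim with $\beta=0$ is inherited directly from the general case.

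There is no genuine obstacle here beyond careful bookkeeping of the substitutions $a\star c=a\rhd c+c\lhd a$ and the identifications $\alpha_i(a,c\lhd b)=\alpha_i(a\ast b,c)$ coming from \eqref{b001} and \eqref{b003}; the structural content has already been established in Corollary \ref{corollaryb}, so this proof is essentially a specialization plus algebraic simplification.
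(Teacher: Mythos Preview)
Your proposal is correct and follows exactly the paper's approach: the paper's entire proof reads ``It is straightforward from Corollary~\ref{corollaryb} when $k=0$,'' and you have fleshed out precisely that specialization, including the explicit verification of the $\beta=0$ simplifications. The only minor remark is that the equivalence assertion is not restated in Corollary~\ref{corollaryb} as printed, so you might trace that part back to Theorem~\ref{lsndt1}(3) (or Corollary~\ref{lsndc1}) rather than to Corollary~\ref{corollaryb} directly; substantively this changes nothing.
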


\begin{proof}
It is straightforward from Corollary \ref{corollaryb} when $k=0$.
\end{proof}
\begin{cor}
Let $(V, \lhd, \rhd)$ be a pre-Novikov algebra and $R=\mathbb{C}[\partial]V$ be the corresponding quadratic left-symmetric conformal algebra. If there exists an element $e\in V$ such that
$a\ast e=a$ and $a\lhd e=a$ for all $a\in V$, then $H^2(R,\mathbb{C}_\beta)=0$ with $\beta \neq 0$.
\end{cor}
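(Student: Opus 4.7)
The plan is to invoke Corollary \ref{corollaryb0} and then reduce an arbitrary 2-cocycle to zero modulo coboundaries by specializing the cocycle identities at the right identity $e$.

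First, since $a = a\ast e$ for every $a\in V$, we have $V = V\ast V$, which is one of the hypotheses of Corollary \ref{corollaryb0}. Consequently any 2-cocycle $\alpha_\lambda(\cdot,\cdot)$ for the central extension \eqref{bracket-pre-Novikov} has polynomial degree at most $3$, say $\alpha_\lambda = \sum_{i=0}^{3}\lambda^{i}\alpha_i$, and the components $\alpha_0,\alpha_1,\alpha_2,\alpha_3$ satisfy \eqref{qqlsa} together with \eqref{b01}--\eqref{b06}.

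The key move is to set $b=e$ in these identities and use $a\ast e=a$, $c\lhd e=c$. Doing so in \eqref{b01} makes the first two terms cancel and leaves $\beta\alpha_3(a,c)=0$; since $\beta\neq 0$, this forces $\alpha_3 \equiv 0$. The same substitution in \eqref{b03} then forces $\alpha_2 \equiv 0$. With the top components killed, I turn to \eqref{b05} at $b=e$, which collapses to
\[
\alpha_0(e,a\star c) = \beta\,\alpha_1(a,c),
\]
and to \eqref{b06} at $b=e$ (using $\beta\neq 0$), which collapses to
\[
\alpha_0(a,c) = \alpha_0(e,c\lhd a).
\]

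These two relations together suggest the candidate primitive $\phi:V\to\mathbb{C}$ defined by $\phi(v) := \frac{1}{\beta}\alpha_0(e,v)$; this is a well-defined $\mathbb{C}$-linear functional. By construction, $\alpha_1(a,c)=\phi(a\star c)$ and $\alpha_0(a,c)=\beta\phi(c\lhd a)$, exactly the equivalence conditions of Corollary \ref{corollaryb0} relating $\alpha_\lambda$ to the zero cocycle $\alpha'_\lambda\equiv 0$ (recall $\alpha_2,\alpha_3$ are already zero on both sides). Hence every 2-cocycle is a coboundary, and $H^2(R,\mathbb{C}_\beta)=0$.

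I do not foresee any real obstacle: the argument is entirely forced once one notices that a right identity $e$ for $\ast$ and $\lhd$ kills all redundant terms in the cocycle identities. The role of the hypothesis $\beta\neq 0$ is twofold, namely to legitimize the division defining $\phi$ and to produce cancellations from \eqref{b01}, \eqref{b03}, \eqref{b06}; the only mild care required is to verify that $a\ast e=a$ indeed delivers the $V=V\ast V$ premise of Corollary \ref{corollaryb0}.
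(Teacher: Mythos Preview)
Your proof is correct and follows essentially the same route as the paper: both arguments invoke Corollary \ref{corollaryb0} (using $V=V\ast V$ from the right identity), specialize identities \eqref{b01}, \eqref{b03}, \eqref{b05}, \eqref{b06} at $b=e$ to force $\alpha_3=\alpha_2=0$ and express $\alpha_1,\alpha_0$ through $\alpha_0(e,\cdot)$, and then trivialize via $\phi(v)=\frac{1}{\beta}\alpha_0(e,v)$. Your write-up is in fact slightly cleaner than the paper's, which contains a harmless typo ($b\star c$ for $a\star c$) in the formula for $\alpha_1$.
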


\begin{proof}
Obviously, $V\ast V=V$.
Let $b=e$ in (\ref{b01}), (\ref{b03}), (\ref{b05}) and (\ref{b06}). Then we get $\alpha_3(a, c)=\alpha_2(a, c)=0$, $\alpha_1(a, c)=\frac{1}{\beta}\alpha_0(e, b\star c)$ and $\alpha_0(a, c)=\alpha_0(e, c\lhd a)$ for all $a$, $c\in V$. Therefore, $\alpha_3(\cdot, \cdot)=\alpha_2(\cdot, \cdot)=0$. Let $\phi(a)=\frac{1}{\beta}\alpha_0(e,a)$ in Corollary \ref{corollaryb0}. Then we can make $\alpha_1(\cdot, \cdot)$ and $\alpha_0(\cdot, \cdot)$ be zero. Therefore, $H^2(R,\mathbb{C}_\beta)=0$.
\end{proof}
\delete{
\begin{cor}
Let $(V,\cdot)$ be a commutative associative algebra with $V=V\cdot V$ and $R=\mathbb{C}\left[\partial\right]V$ be the
corresponding quadratic left-symmetric conformal algebra. Suppose $\widehat{R}=R\oplus \mathbb{C}\mathfrak{c}_\beta$ be a central extension of $(R,\cdot_\lambda\cdot)$
with the $\lambda$-product defined by
\begin{equation}
\widehat{a_\lambda b}=\partial(a\cdot b)+\lambda(a\cdot b)+\alpha_\lambda(a,b)\mathfrak{c}_\beta,
\end{equation}
where $a,b\in V$ and $\alpha_\lambda(a,b)\in\mathbb{C}\left[\partial\right] $. Then for all $a,b\in V$, $\alpha_\lambda(a,b)=\sum\limits_{i=0}^{2}\lambda^i\alpha_i(a,b)$ ,
where all $\alpha_i(\cdot, \cdot) :$  $ V\times V\rightarrow \mathbb{C}$ are bilinear forms satisfying
\begin{eqnarray}
&&\alpha_2(a\cdot b,c)=\alpha_2(a,c\cdot b),\label{ccorl2}\\
&&\alpha_1(a,c\cdot b)+ \beta\alpha_2(a,c\cdot b)=\alpha_1(a\cdot b,c),\label{ccorl3}\\
&&\alpha_1(a,c\cdot b)=\alpha_1(b,c\cdot a),\label{ccorl4}\\
&&\alpha_0(a,c\cdot b)+\beta\alpha_1(a,c\cdot b)=\alpha_0(a\cdot b,c)
+\alpha_0(b,c\cdot a),\label{ccorl5}\\
&&\beta\alpha_0(a,c\cdot b))=\beta\alpha_0(b,c\cdot a),\label{ccorl6}
\end{eqnarray}
for any $a,b,c\in V$.  Moreover, $\alpha_\lambda(\cdot,\cdot)$ is equivalent to $\alpha'_\lambda(\cdot,\cdot)$ if and only if $\alpha_2(\cdot,\cdot)=\alpha'_2(\cdot,\cdot)$ for $a,b\in V$ and
there is a linear map $\phi:$ $V\rightarrow\mathbb{C}$ such that
\begin{align}
&\alpha_1(a,b)=\alpha'_1(a,b)+\phi(a\cdot b),\\
&\alpha_0(a,b)=\alpha'_0(a,b)+\beta\phi( a \cdot b).
\end{align}
\end{cor}

\begin{proof}
It is straightforward from Corollary \ref{corollaryb0}.
\end{proof}
}

\begin{pro}
Let $(V, \lhd, \rhd)$ be a pre-Novikov algebra and $R=\mathbb{C}[\partial]V$ be the corresponding quadratic left-symmetric conformal algebra. Let $\alpha_i(\cdot, \cdot)$ $(i=0, 1, 2, 3)$ be bilinear forms satisfying
(\ref{qqlsa}), (\ref{b001})-(\ref{b005}) for all $a$, $b$, $c\in V$. Set $\eta_i(\cdot, \cdot): \text{Coeff} (R)\times \text{Coeff} (R)\rightarrow \mathbb{C}$ be bilinear forms on $\text{Coeff} (R)$ as follows:
\begin{eqnarray*}
&&\eta_0(a\otimes t^m, b\otimes t^n)=\alpha_0(a,b)\delta_{m+n+1,0},\\
&&\eta_1(a\otimes t^m, b\otimes t^n)=m\alpha_1(a,b)\delta_{m+n,0},\\
&&\eta_2(a\otimes t^m, b\otimes t^n)=m(m-1)\alpha_2(a,b)\delta_{m+n-1,0},\\
&&\eta_3(a\otimes t^m, b\otimes t^n)=m(m-1)(m-2)\alpha_3(a,b)\delta_{m+n-2,0}.
\end{eqnarray*}
for all $a$, $b\in V$ and $m$, $n\in \mathbb{Z}$. Then $\eta_i$ $(i=0, 1, 2, 3)$ are 2-cocycles of the left-symmetric algebra $\text{Coeff} (R)$.
\end{pro}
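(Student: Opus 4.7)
The plan is to realize each $\eta_i$ as the algebraic $2$-cocycle induced on $\text{Coeff}(R)$ by the conformal $2$-cocycle $\alpha_\lambda(\cdot,\cdot) = \lambda^i \alpha_i(\cdot,\cdot)$ on $R$ (with the other three components set to zero). First I would check that such a singleton is itself a conformal $2$-cocycle. In the pre-Novikov setting the left-symmetric product on $V$ is absent from \eqref{bracket-pre-Novikov}, and with $\beta = 0$ the system \eqref{qqlsa}, \eqref{b001}--\eqref{b005} decouples cleanly by degree: \eqref{qqlsa} constrains only $\alpha_3$, the pair \eqref{b001}--\eqref{b002} only $\alpha_2$, the pair \eqref{b003}--\eqref{b004} only $\alpha_1$, and \eqref{b005} only $\alpha_0$. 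Hence each $\alpha_\lambda = \lambda^i \alpha_i$ defines a central extension $\widehat{R}_i = R \oplus \mathbb{C}\mathfrak{c}$ of the conformal algebra $R$ by $\mathbb{C}\mathfrak{c}$ with $\partial\mathfrak{c} = 0$.

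Next I would apply the coefficient algebra construction to the short exact sequence $\mathbb{C}\mathfrak{c} \hookrightarrow \widehat{R}_i \twoheadrightarrow R$ to obtain a central extension of left-symmetric algebras. The centre $\text{Coeff}(\mathbb{C}\mathfrak{c})$ collapses to the one-dimensional space $\mathbb{C}\mathfrak{c}_{-1}$, because the defining relation $(\partial\mathfrak{c})_k + k\mathfrak{c}_{k-1} = 0$ together with $\partial\mathfrak{c} = 0$ forces $k\mathfrak{c}_{k-1} = 0$ for every $k \in \mathbb{Z}$. The associated scalar $2$-cocycle on $\text{Coeff}(R)$ will then be identified with $\eta_i$.

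The explicit computation is the final step. Writing $a_\lambda b = \sum_j \frac{\lambda^j}{j!}\,a_{(j)}b$ and comparing coefficients in $\widehat{a_\lambda b} = a_\lambda b + \lambda^i \alpha_i(a,b)\mathfrak{c}$ gives $\widehat{a_{(i)}b} = a_{(i)}b + i!\,\alpha_i(a,b)\mathfrak{c}$ and $\widehat{a_{(j)}b} = a_{(j)}b$ for $j \neq i$. Substituting into the coefficient product $a_m \circ b_n = \sum_j \binom{m}{j}\,(a_{(j)}b)_{m+n-j}$, the contribution to the centre reduces to $\binom{m}{i}\,i!\,\alpha_i(a,b)\,\mathfrak{c}_{m+n-i}$, which vanishes unless $m+n-i = -1$ and otherwise equals $m(m-1)\cdots(m-i+1)\,\alpha_i(a,b)\,\mathfrak{c}_{-1}$. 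Identifying $\mathfrak{c}_{-1}$ with $1 \in \mathbb{C}$ recovers precisely the defining formula for $\eta_i$, completing the proof. The main point of care is the decoupling step in the first paragraph: it would fail in the more general pre-Gel'fand--Dorfman setting, where \eqref{qlsa4}--\eqref{qlsa9} genuinely mix $\alpha_i$'s of different degrees via the $a\circ b$ term and the parameter $\beta$, so the restriction to pre-Novikov algebras with $\beta = 0$ is essential for analysing the four candidate cocycles independently.
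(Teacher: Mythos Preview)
Your proposal is correct and follows essentially the same route as the paper: build a conformal central extension using Corollary~\ref{corollaryb0} with $\beta=0$, pass to the coefficient algebra, and read off the left-symmetric $2$-cocycle. The only cosmetic difference is that the paper packages all four $\alpha_i$ into a single $\alpha_\lambda=\sum_{i=0}^3\lambda^i\alpha_i$ and invokes the observation ``$\alpha_i(\cdot,\cdot)$ do not depend on each other'' at the end, whereas you make that decoupling explicit up front and treat each $\lambda^i\alpha_i$ as its own conformal cocycle; your version is slightly more transparent but the argument is the same.
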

\begin{proof}
Let $\alpha_\lambda(a,b)=\sum_{i=0}^3\lambda^i\alpha_i(a,b)$ for all $a$, $b\in V$. Then by Corollary \ref{corollaryb0}, there is a central extension $\widehat{R}$ of $R$ by a one-dimensional center $\mathbb{C}\mathfrak{c}_0$ given by (\ref{bracket-pre-Novikov}). Denote $\mathfrak{c}_0$ by $\mathfrak{c}$. Then the coefficient algebra
$\text{Coeff}(\widehat{R})$ is $\text{Coeff}(R)\oplus \mathbb{C}\mathfrak{c}\otimes t^{-1}$ with the non-trivial products given by
\begin{eqnarray*}
\widehat{(a\otimes t^m)\circ (b\otimes t^n)}&=&(a\circ b)\otimes t^{m+n}+m(a\rhd b)\otimes t^{m+n-1}-n(b\lhd a)\otimes t^{m+n-1}\\
&&+(\alpha_{0}(a,b)\delta_{m+n+1,0}+m\alpha_{1}(a,b)\delta_{m+n,0}+m(m-1)\alpha_2(a,b)\delta_{m+n-1,0}\\
&&+m(m-1)(m-2)\alpha_3(a,b)\delta_{m+n-2,0})\mathfrak{c} \otimes t^{-1}.
\end{eqnarray*}
Therefore, $\text{Coeff}(\widehat{R})$ is a central extension of $\text{Coeff}(R)$ by a one-dimensional center $\mathbb{C}\mathfrak{c}\otimes t^{-1}$. Note that $\alpha_i(\cdot, \cdot)$ $(i=0, 1, 2, 3)$ do not depend on each other. Therefore, by the cohomology theory of left-symmetric algebras given in \cite{D},  $\eta_i$ $(i=0, 1, 2, 3)$ are 2-cocycles of $\text{Coeff} (R)$.
\end{proof}

Next, we present several examples to compute $H^2(R, \mathbb{C})$ where $\mathbb{C}=\mathbb{C}_0$.
\begin{ex}\label{e1}
Let $R_c=\mathbb{C}[\partial]L$ be the left-symmetric conformal algebra given in Example \ref{rank one}. It is obvious that $R_c$ is the quadratic left-symmetric conformal algebra corresponding to a 1-dimensional left-symmetric Poisson algebra $(V=\mathbb{C}L, \cdot, \circ)$  defined as follows:
\begin{equation}
L\cdot L=L,~~L\circ L=cL.
\end{equation}
 Therefore, by
Corollary \ref{corleft}, \eqref{corl2}-\eqref{corl6} with $\beta=0$ and some simple computations, we can get\\
(1) If $c\neq0$,
\begin{eqnarray*}
\alpha_2(L,L)=0,~~\alpha_1(L,L)=A,~~\alpha_0(L,L)=cA,
\end{eqnarray*}
for any $A\in\mathbb{C}$.\\
(2) If $c=0$,
\begin{eqnarray*}
\alpha_3(L,L)=0,~~\alpha_2(L,L)=A,~~\alpha_1(L,L)=B,~~\alpha_0(L,L)=0,
\end{eqnarray*}
for any $A, B\in\mathbb{C}$.

This is the same as the result given in Proposition 3.4 in \cite{HL}.
If $c=0$, by choosing the linear map $\phi:$ $V\rightarrow\mathbb{C}$ in Corollary \ref{corleft} defined by $\phi(L)=B$, we
can make $\alpha_1(\cdot,\cdot)$ be zero up to equivalence. Thereby, by Corollary \ref{corleft}, all equivalence classes of central
extensions of $R_0$ by a one-dimensional centre $\mathbb{C}\mathfrak{c}$ are $\widehat{R_0}(A)$ with the non-trival $\lambda$-product given as follows:
$$
L_\lambda L=(\partial+\lambda)L+A\lambda^2\mathfrak{c},
$$
for all $A\in\mathbb{C}$. Note that if $A_1\neq A_2$ , then $\widehat{R_0}(A_1)$ is not equivalent to $\widehat{R_0}(A_2)$.
Therefore, in this case, $\text{dim} H^2( R_0,\mathbb{C})=1  $. Similarly, we can get $\text{dim} H^2( R_c,\mathbb{C})=0$ when $c\neq 0$.
\end{ex}

\begin{ex}
Let $R=\mathbb{C}\left[\partial\right]L \oplus \mathbb{C}\left[\partial\right]W$ be the left-symmetric conformal algebra with $\lambda$-products
given as follows:
\begin{align}
&L_\lambda L= (\partial+\lambda)L,~~L_\lambda W=(\partial+2\lambda )W,\\
&W_\lambda W=0,~~W_\lambda L=0,
\end{align}
Next, we compute $H^2(R, \mathbb{C})$.

Actually, $R$ is the quadratic left-symmetric conformal algebra corresponding to a 2-dimensional pre-Gel'fand-Dorfman algebra $(V=\mathbb{C}L\oplus \mathbb{C}W, \lhd, \rhd, \circ)$ given by
\begin{equation*}
\begin{aligned}
&L\lhd L=L,~~L\circ L=0,~~L\rhd L=0 ,\\
&W\lhd L=W,~~L\circ W=0,~~L\rhd W=W,\\
&L\lhd W=0,~~W\circ L=0,~~W\rhd L=0,\\
&W\lhd W=0,~~W\circ W=0,~~W\rhd W=0.
\end{aligned}
\end{equation*}
Obviously, $V \lhd V=V$. Therefore, by
Corollary \ref{lsndc1}, and by some simple computations, we obtain
\begin{eqnarray*}
&&\alpha_3(L,W)=A,\alpha_2(L,L)=B,\alpha_1(L,L)=C,\alpha_1(L,W)=D,\\
&&\alpha_3(L,L)=\alpha_3(W,L)=\alpha_3(W,W)=\alpha_2(L,W)=\alpha_2(W,L)=\alpha_2(W,W)\\
&=&\alpha_1(W,L)=\alpha_0(L,L)=\alpha_0(W,L)=\alpha_0(L,W)=\alpha_1(W,W)=\alpha_0(W,W)=0,
\end{eqnarray*}
for any $A,B,C,D,\in\mathbb{C}$.

\delete{Actually, if $a\neq0,a\neq2,a\neq1$, or $a\neq0,b\neq0$, we can choose the linear map $\phi:$ $V\rightarrow\mathbb{C}$ in Corollary \ref{lsndc1}
defined by $\phi(L)=B$ and $\phi(W)=\frac{C}{a}$ to make $\alpha_1(\cdot,\cdot)$ and $\alpha_0(\cdot,\cdot)$ be zero up to an equivalence.
Thereby, by Corollary \ref{lsndc1}, all equivalence classes of central extensions of R(a,b) by a one-dimensional centre $\mathbb{C}\mathfrak{c}$ are $\widehat{R(a,b)}(A)$
with the $\lambda$-product as follows:
$$
\begin{aligned}
&L_\lambda L=(\partial+\lambda)L+A\lambda^2\mathfrak{c},~~L_\lambda W=(\partial+a\lambda+b )W,\\
&W_\lambda W=0,~~W_\lambda L=0,
\end{aligned}
$$
for all $A\in\mathbb{C}$. Note that if $A_1\neq A_2$ , then $\widehat{R(a,b)}(A_1)$ is not identical with $\widehat{R(a,b)}(A_2)$.
Therefore, in this case, dim$H^2( R(a,b),\mathbb{C}\mathfrak{c})=1  $ .\par}

Choose the linear map $\phi:$ $V\rightarrow\mathbb{C}$ in Corollary \ref{lsndc1}
defined by $\phi(L)=C$ and $\phi(W)=\frac{D}{2}$ to make $\alpha_1(\cdot,\cdot)$ and $\alpha_0(\cdot,\cdot)$ be zero up to  equivalence.
Therefore, by Corollary \ref{lsndc1}, all equivalence classes of central extensions of $R$ by a one-dimensional centre $\mathbb{C}\mathfrak{c}$ are $\widehat{R}(A,B)$
with the $\lambda$-products as follows:
$$
\begin{aligned}
&L_\lambda L=(\partial+\lambda)L+B\lambda^2\mathfrak{c},~~L_\lambda W=(\partial+2\lambda)W+A\lambda^3\mathfrak{c},\\
&W_\lambda W=0,~~W_\lambda L=0,
\end{aligned}
$$
for all $A,B\in\mathbb{C}$. Note that if $(A_1,B_1)\neq (A_2,B_2)$ , then $\widehat{R}(A_1,B_1)$ is not equivalent to $\widehat{R}(A_2,B_2)$.
Therefore, in this case, \text{dim}$H^2( R,\mathbb{C})=2 $ .

\delete{
If $a=1$ and $b=0$, we can choose the linear map $\phi:$ $V\rightarrow\mathbb{C}$ in Corollary \ref{lsndc1}
defined by $\phi(L)=C$ and $\phi(W)=D$ to make $\alpha_1(L,W),\alpha_1(L,L)$ and $\alpha_0(\cdot,\cdot)$ be zero up to an equivalence.
Thereby, by Corollary \ref{lsndc1}, all equivalence classes of central extensions of R(1,0) by a one-dimensional centre $\mathbb{C}\mathfrak{c}$ are $\widehat{R(1,0)}(A)$
with the $\lambda$-product as follows:
$$
\begin{aligned}
&L_\lambda L=(\partial+\lambda)L+A\lambda^2\mathfrak{c},~~L_\lambda W=(\partial+2\lambda)W+B\lambda^2\mathfrak{c},\\
&W_\lambda W=E\lambda\mathfrak{c},~~W_\lambda L=F\lambda\mathfrak{c},
\end{aligned}
$$
for all $A,B,E,F\in\mathbb{C}$. Note that if $(A_1,B_1,E_1,F_1)\neq (A_2,B_2,E_2,F_2)$ , then $\widehat{R(1,0)}(A_1,B_1,E_1,F_1)$ is not identical
with $\widehat{R(1,0)}(A_2,B_2,E_2,F_2)$.
Therefore, in this case, dim$H^2( R(1,0),\mathbb{C}\mathfrak{c})=4 $ .\par
If $a=0$ and $b=0$, we can choose the linear map $\phi:$ $V\rightarrow\mathbb{C}$ in Corollary \ref{lsndc1}
defined by $\phi(L)=B$ to make $\alpha_1(W,L),\alpha_1(L,L),\alpha_1(W,W),\alpha_0(L,L)\alpha_0(W,W)$ and $\alpha_0(W,L)$ be zero up to an equivalence.
Thereby, by Corollary \ref{lsndc1}, all equivalence classes of central extensions of R(0,0) by a one-dimensional centre $\mathbb{C}\mathfrak{c}$ are $\widehat{R(0,0)}(A)$
with the $\lambda$-product as follows:
$$
\begin{aligned}
&L_\lambda L=(\partial+\lambda)L+A\lambda^2\mathfrak{c},~~L_\lambda W=\partial W+(C\lambda+D)\mathfrak{c},\\
&W_\lambda W=0,~~W_\lambda L=0,
\end{aligned}
$$
for all $A,C,D\in\mathbb{C}$. Note that if $(A_1,C_1,D_1)\neq (A_2,C_2,D_2)$ , then $\widehat{R(0,0)}(A_1,C_1,D_1)$ is not identical
with $\widehat{R(0,0)}(A_2,C_2,D_2)$.
Therefore, in this case, dim$H^2( R(0,0),\mathbb{C}\mathfrak{c})=3 $ .\par
If $a=0$ and $b=0$, we can choose the linear map $\phi:$ $V\rightarrow\mathbb{C}$ in Corollary \ref{lsndc1}
defined by $\phi(L)=B$ and $\phi(W)=\frac{C}{b}$ to make $\alpha_1(\cdot,\cdot)$ and $\alpha_0(\cdot,\cdot)$ be zero up to an equivalence.
Thereby, by Corollary \ref{lsndc1}, all equivalence classes of central extensions of R(0,b) by a one-dimensional centre $\mathbb{C}\mathfrak{c}$ are $\widehat{R(0,b)}(A)$
with the $\lambda$-product as follows:
$$
\begin{aligned}
&L_\lambda L=(\partial+\lambda)L+A\lambda^2\mathfrak{c},~~L_\lambda W=(\partial+b) W,\\
&W_\lambda W=0,~~W_\lambda L=0,
\end{aligned}
$$
for all $A\in\mathbb{C}$. Note that if $A_1\neq A_2$ , then $\widehat{R(0,b)}(A_1)$ is not identical
with $\widehat{R(0,b)}(A_2)$.
Therefore, in this case, dim$H^2( R(0,b),\mathbb{C}\mathfrak{c})=1 $ .}
\end{ex}

\begin{ex}
Let $R_1=\bigoplus_{i\in \ZZ}\mathbb{C}\left[\partial\right]x_i$ be the infinite left-symmetric conformal algebra with $\lambda$-products given as follows:
\begin{equation*}
{x_i}_\lambda x_j=(\partial +\lambda+1)x_{i+j},\;\;\text{for all}~~i,j\in\mathbb{Z}.
\end{equation*}
Next, we compute $H^2(R_1, \mathbb{C})$.

It is clear that $R_1$ is the quadratic left-symmetric conformal algebra corresponding to an infinite-dimensional pre-Gel'fand-Dorfman algebra $V=\bigoplus_{i\in\mathbb{Z}}\mathbb{C}x_i$
with operations `` $\lhd$ ", ``$\rhd$" and `` $\circ$ "  given as follows:
\begin{eqnarray*}
&&x_i\lhd x_j=x_{i+j},\;\; x_i\rhd x_j=0,\\
&&x_i\circ x_j=x_{i+j},\;\; \text{for all $i$, $j\in \ZZ$.}
\end{eqnarray*}
By Corollary \ref{corollaryb} and some simple computations, we get
\begin{align}
&\alpha_3(x_i,x_j) = 0,\\
&\alpha_2(x_i,x_{j+k}) =\alpha_2(x_{i+j},x_k)=\alpha_2(x_j,x_{i+k}) ,\label{al1}\\
&\alpha_2(x_{i+j},x_k)=\alpha_1(x_{i+j},x_k)-\alpha_1(x_i,x_{j+k}),\label{al2}\\
&\alpha_1(x_j,x_{i+k})=\alpha_1(x_i,x_{j+k}),\label{al3}\\
&\alpha_1(x_i,x_{j+k})=\alpha_0(x_{i+j},x_k)+\alpha_0(x_j,x_{i+k})-\alpha_0(x_i,x_{j+k}),\label{al4}\\
&\alpha_0(x_j,x_{i+k})=\alpha_0(x_i,x_{j+k})\label{al5},
\end{align}
for any $i,j,k\in \mathbb{Z}$. By \eqref{al1}, we get $\alpha_2(x_i,x_{j+k})=\alpha_2(x_{i+j+k},x_0)=\alpha_2(x_0,x_{i+j+k}) $. Thus, we set
$\alpha_2(x_i,x_{j+k})=\alpha_2(x_0,x_{i+j+k})=f(i+j+k)$, for some complex function $f$. Moreover, we get $ \alpha_2(x_i,x_j)=f(i+j)$ by letting $k=0$. However, by letting $j=0$ in \eqref{al2}, we get $f(i+k)=0 $. Therefore $f(i)=0$ for all $i\in\mathbb{Z}$. Similarly, we set $\alpha_1(x_i,x_0)=\alpha_1(x_0,x_i)=g(i)$ for some complex function $g$. Moreover, we have $\alpha_1(x_{i+j},x_k)=\alpha_1(x_i,x_{j+k})=\alpha_0(x_{i+j},x_k)$ by \eqref{al4} and \eqref{al5}. Thus $\alpha_0(x_i,x_j)=\alpha_1(x_i,x_j)=g(i+j)$.  By choosing the linear map $\phi$: $ V\rightarrow\mathbb{C}$ in Theorem \ref{lsndt1} defined by $\phi(x_i)=g(i)$ for all $i\in\mathbb{Z}$, we can make $\alpha_1(\cdot,\cdot)$ and $\alpha_0(\cdot,\cdot)$ be zero up to equivalence.
Consequently, all equivalence classes of central extensions of $R$ by a one-dimensional centre
$\mathbb{C}\mathfrak{c}$ are $\widehat{R}(f)=R\oplus \mathbb{C}\mathfrak{c}$ with the $\lambda$-products as follows:
$$
{x_i}_\lambda x_j=(\partial+\lambda+1)x_{i+j},
$$
for
all $i,j\in\mathbb{Z} $. Therefore, \text{dim} $H^2(R, \mathbb{C})=0$.

Let $R_2=\bigoplus_{i\in \ZZ}\mathbb{C}\left[\partial\right]x_i$ be the infinite left-symmetric conformal algebra with $\lambda$-products given as follows:
\begin{equation*}
{x_i}_\lambda x_j=(\partial +\lambda)x_{i+j},\;\;\text{for all}~~i,j\in\mathbb{Z}.
\end{equation*}
By Corollary \ref{corollaryb} and some simple computations, we get
\begin{align}
&\alpha_3(x_i,x_j) = 0,\\
&\alpha_2(x_i,x_{j+k}) =\alpha_2(x_{i+j},x_k)=\alpha_2(x_j,x_{i+k}) ,\label{alpha2}\\
&\alpha_1(x_i,x_{j+k})=\alpha_1(x_{i+j},x_k)=\alpha_1(x_j,x_{i+k}),\label{alpha1}\\
&\alpha_0(x_i,x_{j+k})=\alpha_0(x_{i+j},x_k)+\alpha_0(x_j,x_{i+k}),\label{alpha0}
\end{align}
for any $i,j,k\in \mathbb{Z}$.  Similar to the discussion above, we get that  $ \alpha_2(x_i,x_j)=f(i+j)$, $\alpha_1(x_i,x_j)=g(i+j) $ and $\alpha_0(x_i,x_j)=0$ for all $i$, $j\in \ZZ$ and some complex functions $f$ and $g$.

\delete{By \eqref{alpha2}, we get $\alpha_2(x_i,x_{j+k})=\alpha_2(x_{i+j+k},x_0)=\alpha_2(x_0,x_{i+j+k}) $. Thus, we set
$\alpha_2(x_i,x_{j+k})=\alpha_2(x_0,x_{i+j+k})=f(i+j+k)$, for some complex function $f$. Moreover, we get $ \alpha_2(x_i,x_j)=f(i+j)$ by letting $k=0$.
Similarly, $\alpha_1(x_i,x_j)=g(i+j) $ for some complex function $g$. Then, we consider $\alpha_0(x_i,x_j) $. Set $j=0$ in \eqref{alpha0}. We have
$\alpha_0(x_0,x_{i+k})=0 $ for any $i,k\in\mathbb{Z}$. Hence, it is equal to say $ \alpha_0(x_0,x_i)=0$ for all $i\in\mathbb{Z}$. Then we get
$2\alpha_0(x_j,x_k)=0 $ for any $j,k\in\mathbb{Z}$ by letting $i=0$ in \eqref{alpha0}. Consequently, $\alpha_0(x_i,x_j)=0 $ for any $i,j,k\in\mathbb{Z}$.\par}
Therefore, by choosing the linear map $\phi$: $ V\rightarrow\mathbb{C}$ in Corollary \ref{corollaryb0} defined by $\phi(x_i)=g(i)$ for all $i\in\mathbb{Z}$, we can make $\alpha_1(\cdot,\cdot)$ be zero up to  equivalence. Consequently, all equivalence classes of central extensions of $R_2$ by a one-dimensional centre
$\mathbb{C}\mathfrak{c}$ are $\widehat{R_2}(f)=R_2\oplus \mathbb{C}\mathfrak{c}$ with the $\lambda$-products as follows:
$$
{x_i}_\lambda x_j=(\partial+\lambda)x_{i+j}+f(i+j)\lambda^2\mathfrak{c},
$$
for all $i,j\in\mathbb{Z}$ and all complex function $f$. Note that if $f_1\neq f_2$, then $\widehat{R_2}(f_1) $ is not equivalent to $\widehat{R_2}(f_2) $. Therefore,
\text{dim} $H^2(R_2, \mathbb{C})=\infty$.

\end{ex}

\section{Simplicities of quadratic left-symmetric conformal algebras}
In this section, we will investigate the simplicities of quadratic left-symmetrical conformal algebras.

For a current left-symmetric conformal algebra $R=\mathbb{C}[\partial]L$ associated with a left-symmetric algebra $L$, it is easy to see that $R=\mathbb{C}[\partial]L$ is simple if and only if $L$ is simple. Therefore, in this section, we always assume that quadratic
 left-symmetric conformal algebras are not current in the sequel.
\delete{ To begin with, let us define $R$ is a quadratic left-symmetric conformal algebra corresponding to $(V,\lhd,\rhd,\circ )$ defined by Theorem \ref{lspn} , then the $\lambda$-product of a quadratic left-symmetric
 conformal algebra $R=\mathbb{C}[\partial]V$ given as follows
\begin{equation}
a_\lambda b=\partial(b\lhd a)+a\circ b+\lambda(a\rhd b+b\lhd a),~~\forall a,b\in V. \label{S1}
\end{equation}}

\begin{defi}
A subspace $I$ of a pre-Novikov algebra $(V,\lhd,\rhd)$ is called an {\bf ideal} of $V$ if
$a\lhd b, b\lhd a,a\rhd b, b\rhd a\in I $ for all $a\in I$ and $b\in V$.

Any nonzero pre-Novikov algebra $V$ has
two trivial ideals 0 and $V$. An ideal $I$ of $(V,\lhd,\rhd)$ is called {\bf proper}, if $I$ is not trivial.
A pre-Novikov algebra $(V,\lhd,\rhd)$ is called {\bf simple} if $V$ is non-trivial and has no proper ideals.

An ideal $I$ in the pre-Gel'fand-Dorfman algebra $(V, \lhd, \rhd, \circ)$
is called a {\bf proper ideal} if $I$ is both a proper ideal of $(V, \lhd, \rhd)$ and a proper ideal of $(V, \circ)$. A pre-Gel'fand-Dorfman algebra $(V,\lhd,\rhd, \circ)$ is called {\bf simple} if $V$ is non-trivial and has no proper ideals.  \end{defi}

Next, we give some necessary conditions for a quadratic left-symmetric conformal algebra to be simple.

\begin{pro}\label{simple1}
Let $R = \mathbb{C}\left[\partial\right]V$ be the quadratic left-symmetric conformal algebra corresponding to a pre-Gel'fand-Dorfman algebra $(V,\lhd,\rhd,\circ )$. If $R = \mathbb{C}\left[\partial\right]V$ is simple, then $(V,\lhd,\rhd,\circ )$ is simple.
\end{pro}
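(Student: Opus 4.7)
The plan is to prove the contrapositive: assuming $(V,\lhd,\rhd,\circ)$ is not simple, we produce a proper ideal of $R$. So suppose $I$ is a proper ideal of the pre-Gel'fand-Dorfman algebra $(V,\lhd,\rhd,\circ)$, meaning $0 \neq I \neq V$, and $I$ is simultaneously an ideal of the pre-Novikov algebra $(V,\lhd,\rhd)$ and an ideal of the left-symmetric algebra $(V,\circ)$.

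The natural candidate for a proper ideal of $R$ is $J := \mathbb{C}[\partial]I$, which is clearly a nonzero proper $\mathbb{C}[\partial]$-submodule of $R = \mathbb{C}[\partial]V$ because $R$ is free over $\mathbb{C}[\partial]$ (so $J \cap V = I$, hence $J \neq 0$ and $J \neq R$). What remains is to verify $J_\lambda R \subseteq J[\lambda]$ and $R_\lambda J \subseteq J[\lambda]$. By conformal sesquilinearity, it suffices to check these inclusions on generators $a \in I$ and $b \in V$. Using the defining $\lambda$-product from Theorem \ref{lspn},
\begin{equation*}
a_\lambda b = \partial(b\lhd a) + a\circ b + \lambda(a\rhd b + b\lhd a),
\end{equation*}
each of the three coefficients lies in $I$: we have $b\lhd a \in I$ and $a\rhd b \in I$ because $I$ is a two-sided ideal of the pre-Novikov algebra $(V,\lhd,\rhd)$, and $a\circ b \in I$ because $I$ is an ideal of $(V,\circ)$. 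Therefore $a_\lambda b \in \mathbb{C}[\partial]I[\lambda] = J[\lambda]$. The symmetric computation
\begin{equation*}
b_\lambda a = \partial(a\lhd b) + b\circ a + \lambda(b\rhd a + a\lhd b)
\end{equation*}
lies in $J[\lambda]$ by the same argument applied to the other inclusion of the ideal conditions.

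This shows $J$ is a proper ideal of $R$, contradicting simplicity of $R$; hence $(V,\lhd,\rhd,\circ)$ must be simple. There is no real obstacle here: the proof is essentially a direct unpacking of the ideal definitions against formula \eqref{lnd}. The only subtlety worth emphasizing in the write-up is that the combined definition of a proper ideal of a pre-Gel'fand-Dorfman algebra (being an ideal for both the pre-Novikov structure and the left-symmetric structure) is precisely what is needed to make each of the three coefficients $b\lhd a$, $a\circ b$, and $a\rhd b + b\lhd a$ land in $I$ simultaneously; if one weakened the definition of ideal on either side, the argument would break down.
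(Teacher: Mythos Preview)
Your proof is correct and follows exactly the same strategy as the paper: take a proper ideal $I$ of $(V,\lhd,\rhd,\circ)$, form $J=\mathbb{C}[\partial]I$, and use formula~\eqref{lnd} to see that $J$ is a proper ideal of $R$. Your write-up simply spells out the details (conformal sesquilinearity reduction to generators, freeness guaranteeing $J\neq R$) that the paper leaves implicit.
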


\begin{proof}
Suppose that $I$ is a proper ideal of $(V,\lhd,\rhd,\circ )$.
By (\ref{lnd}), we get that $\mathbb{C}\left[\partial\right]I$ is a proper ideal of $R = \mathbb{C}\left[\partial\right]V$. This contradicts with the simplicity of $R$. Thus this proposition holds.
\end{proof}

\delete{\begin{rmk}
Note that the necessity in Proposition \ref{simple1} is not sufficient. We will illustrate this by Example \ref{exsim} and then we will investigate under what circumstances it will be a simple quadratic left-symmetric conformal algebra.
\end{rmk}

\begin{ex}\label{exsim}
\delete{Given a two dimensional simple pre-Novikov algebra $(V,\triangleleft,\triangleright)=\mathbb{C}L\oplus\mathbb{C}W$ defined by:
\begin{align*}
&L\triangleleft L=-2L,~~~~L\triangleleft W=2L,~~~~W\triangleleft W=2W,~~~~W\triangleleft L=-2W,\\
&L\triangleright L=0,~~~~L\triangleright W=2L+2W,~~~~W\triangleright W=-2L-2W,~~~~W\triangleright L=0.
\end{align*}
Let `` $\circ$ " be a trivial operator in the pre-Gel'fand-Dorfman algebra $(V,\triangleleft,\triangleright,\circ)$. There is no doubt that $V$ is also simple and then the corresponding quadratic left symmetric conformal algebra $R=\mathbb{C}[\partial] V$ is not simple in that it has a propel ideal $I=\{a\partial^iL+b\partial^jW\mid a,b\in\mathbb{C},i\in\mathbb{N},j\in\mathbb{N}\setminus\{0\}\}$.}
\end{ex}
}
\begin{lem}\label{simlemma}
Let $(V,\lhd,\rhd)$ be a simple pre-Novikov algebra. Then $a\rhd b=-b\lhd a$ does not hold for all $a,b\in V$.
\end{lem}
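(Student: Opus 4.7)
The plan is to argue by contradiction. Assume that $a\rhd b=-b\lhd a$ for all $a,b\in V$. The goal is to show that this, combined with the four pre-Novikov axioms (\ref{ND1})–(\ref{ND4}), forces the multiplication on $V$ to degenerate so severely that $V$ cannot be simple.

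First I would exploit axiom (\ref{ND1}). Substituting $a\rhd x=-x\lhd a$ everywhere, both sides become expressions involving only $\lhd$. Rewriting $a\rhd(b\rhd c)=(c\lhd b)\lhd a$ and expanding the right-hand side similarly, after using (\ref{ND4}) to freely swap the two outermost right factors $(x\lhd y)\lhd z=(x\lhd z)\lhd y$, the identity collapses to
\begin{equation*}
c\lhd(a\lhd b)=c\lhd(b\lhd a)\qquad\text{for all }a,b,c\in V.
\end{equation*}
Next I would plug the same substitution into (\ref{ND3}). This yields $-c\lhd(a\lhd b-b\lhd a)=-(c\lhd a)\lhd b$, and combining it with the previous display gives $(c\lhd a)\lhd b=0$ for all $a,b,c\in V$. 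In other words, $(V\lhd V)\lhd V=0$.

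Now I would show that $I:=V\lhd V$ is an ideal of $(V,\lhd,\rhd)$. For $x=a\lhd b\in I$ and $c\in V$: $x\lhd c=(a\lhd b)\lhd c=0\in I$; $c\lhd x\in V\lhd V=I$; $x\rhd c=-c\lhd x\in I$; and $c\rhd x=-x\lhd c=0\in I$. Hence $I$ is an ideal. By simplicity, either $I=V$ or $I=0$. The first option is impossible, because then $0=(V\lhd V)\lhd V=V\lhd V=V$, contradicting non-triviality of $V$. The second option means $V\lhd V=0$, and together with the standing hypothesis $a\rhd b=-b\lhd a$ it forces $V\rhd V=0$ as well. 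Thus every $\mathbb{C}$-subspace of $V$ is automatically an ideal, so simplicity forces $\dim V=1$ with trivial products.

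The remaining issue, and the only delicate point, is ruling out this last abelian one-dimensional case. Here I would invoke the convention (standard in the context of \cite{HBG}) that a simple pre-Novikov algebra has non-trivial multiplication; alternatively, one observes that a one-dimensional $V$ with zero multiplication is the current pre-Novikov algebra on a trivial Novikov algebra, and the corresponding quadratic left-symmetric conformal algebra is current, a situation already excluded at the beginning of Section 4. Either way, this final case is ruled out and the contradiction is complete. The main obstacle is just the bookkeeping in the derivation of $(V\lhd V)\lhd V=0$; once that identity is in hand, the ideal argument finishes the proof quickly.
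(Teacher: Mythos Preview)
Your argument is correct and follows essentially the same line as the paper: derive $(V\lhd V)\lhd V=0$ under the assumption $a\rhd b=-b\lhd a$, observe that $V\lhd V$ is an ideal, and conclude from simplicity and non-triviality. The only difference is that the paper obtains $(V\lhd V)\lhd V=0$ more economically, using just \eqref{ND3} and \eqref{ND4}: substituting into \eqref{ND3} gives $c\lhd(a\lhd b-b\lhd a)=(c\lhd a)\lhd b$, and by \eqref{ND4} and the same identity with $a,b$ swapped this equals $(c\lhd b)\lhd a=c\lhd(b\lhd a-a\lhd b)$, whence $(c\lhd a)\lhd b=0$; your detour through \eqref{ND1} is unnecessary but not wrong.
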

\begin{proof}
If $a\rhd b=-b\lhd a$  for all $a,b\in V$, then by \eqref{ND3} and \eqref{ND4}, we have
$$
c\lhd(a\lhd b-b\lhd a)=(c\lhd a)\lhd b=(c\lhd b)\lhd a=c\lhd(b\lhd a-a\lhd b).
$$
Therefore $(c\lhd b)\lhd a=0$ for all $a,b,c\in V$. Thus, $(V\lhd V)\lhd V=0$. Note that in this case $V\lhd V$ is an ideal of  $(V,\lhd,\rhd)$. Since $(V,\lhd,\rhd)$ is simple, we
get that $V\lhd V=0$, which is impossible. Consequently, $a\rhd b=-b\lhd a$ does not hold for all $a,b\in V$.
\end{proof}

\begin{thm}\label{simpleth}
Let $(V,\lhd,\rhd,\circ )$ be a pre-Gel'fand-Dorfman algebra. Set $a\star b=
a\rhd b+b\lhd a$ for all $a$, $b\in V$. If $(V,\lhd,\rhd)$ is a simple pre-Novikov algebra, and
$V=V\star V$, then the quadratic left-symmetric conformal algebra $R=\mathbb{C}\left[\partial\right]V $ corresponding to $(V,\lhd,\rhd,\circ )$
is simple.
\end{thm}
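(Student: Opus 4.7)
\medskip

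The plan is to take a nonzero ideal $I$ of $R = \mathbb{C}[\partial]V$ and extract from it an ideal of the pre-Novikov algebra $(V,\lhd,\rhd)$; simplicity of the latter together with the hypothesis $V = V\star V$ will then force $I = R$. To set this up I would let
\[
 L_n = \{\,v\in V : \exists\,x\in I \text{ with } x=\partial^n v+\text{lower $\partial$-degree terms}\,\},\qquad L_\infty=\bigcup_{n\ge 0}L_n,\qquad J=I\cap V=L_0,
\]
and note that $I \neq 0$ gives $L_\infty \neq 0$.

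The technical heart is extracting coefficients of the $\lambda$-products. For a fixed $x=\sum_{i=0}^n\partial^i v_i\in I$ with $v_n\neq 0$ and any $b\in V$, I would compute
\[
x_\lambda b=\sum_{i=0}^n(-\lambda)^i\bigl(\partial(b\lhd v_i)+v_i\circ b+\lambda(v_i\star b)\bigr),\qquad
b_\lambda x=\sum_{i=0}^n(\partial+\lambda)^i\bigl(\partial(v_i\lhd b)+b\circ v_i+\lambda(b\star v_i)\bigr),
\]
using conformal sesquilinearity, and then read off coefficients of $\lambda^{n+1}$ and $\lambda^n$. The top-degree coefficients are the easy part: they land in $V$ and immediately give
\[
v_n\star b\in J\quad\text{and}\quad b\star v_n\in J\qquad(\forall b\in V).
\]
The $\lambda^n$ coefficients live in $\partial V+V$; taking the leading $\partial$-part, the contribution from $x_\lambda b$ produces $b\lhd v_n\in L_1\subseteq L_\infty$, while the contribution from $b_\lambda x$ produces $v_n\lhd b+n(b\star v_n)\in L_1$, so (combined with $b\star v_n\in J$) also $v_n\lhd b\in L_\infty$. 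Finally $v_n\rhd b=v_n\star b-b\lhd v_n\in L_\infty$ and $b\rhd v_n=b\star v_n-v_n\lhd b\in L_\infty$.

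These four membership statements say exactly that $L_\infty$ is closed under $\lhd$ and $\rhd$ from either side with elements of $V$, i.e.\ $L_\infty$ is an ideal of the pre-Novikov algebra $(V,\lhd,\rhd)$. Since $L_\infty\neq 0$ and $(V,\lhd,\rhd)$ is simple, $L_\infty=V$. Combining with the top-coefficient statement, $V\star V=L_\infty\star V\subseteq J$; using the hypothesis $V=V\star V$ we obtain $V\subseteq J\subseteq I$, and since $I$ is a $\mathbb{C}[\partial]$-submodule, $R=\mathbb{C}[\partial]V\subseteq I$, proving simplicity.

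The main obstacle is the bookkeeping of $\lambda$-coefficients in $b_\lambda x$: the factor $(\partial+\lambda)^i$ mixes $\partial$ and $\lambda$, so one must carefully identify which terms contribute to $\lambda^n$ (notably the cross-term $n\partial(b\star v_n)$ coming from the $\binom{n}{n-1}$ term in $(\partial+\lambda)^n$, together with the $b\star v_{n-1}$ coming from the $i=n-1$ summand). Once this is handled correctly, everything else is essentially formal, and the role of the operation $\circ$ is entirely passive, which is consistent with the fact that no simplicity hypothesis on $\circ$ is assumed.
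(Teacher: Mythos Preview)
Your argument is correct and is in fact cleaner than the paper's. Both proofs extract $\lambda$-coefficients from $x_\lambda b$ and $b_\lambda x$ for $x\in I$, but the organization differs substantially. The paper proceeds by case analysis: from the top $\lambda$-coefficient it obtains $t\star b,\,b\star t\in I$ for the leading-coefficient vector $t$; if both vanish for all $b$ it descends to the next coefficient, builds an ascending chain of subsets $U_1, U_2,\dots$, and invokes an auxiliary lemma (that $a\rhd b=-b\lhd a$ cannot hold identically in a simple pre-Novikov algebra) to derive a contradiction and locate a single nonzero $w\in V\cap I$; a second iteration then generates all of $V$. You instead package the whole process into the leading-coefficient filtration $L_0\subseteq L_1\subseteq\cdots$ and show in one stroke that $L_\infty$ is a pre-Novikov ideal, whence $L_\infty=V$ by simplicity; then $V=V\star V=L_\infty\star V\subseteq J$ finishes immediately. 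This avoids both the case split and the auxiliary lemma (its content is subsumed by the hypothesis $V=V\star V$), and makes transparent that the operation $\circ$ is irrelevant. The only point worth stating explicitly is that each $L_n$ is a subspace and that $L_n\subseteq L_{n+1}$ (via $x\mapsto\partial x$), so that $L_\infty$ is genuinely a subspace; this is routine but should be said.
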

\begin{proof}
Suppose that $I$ is a nonzero ideal of $R$ and $\beta=\sum\limits_{i=0}^{n}f_i(\partial)a_i\in I\setminus \{0\}$, where $a_i\in V(0\leq i\leq n)$ are
 linearly independent and $f_i(\partial)\in \mathbb{C}\left[\partial\right] \setminus \{0\}$. Assume that the degrees of $f_m(\partial)$, $\cdots$, $f_l(\partial)$
 are maximal in those $f_i(\partial)$, and the leading coefficients of $f_m(\partial)$, $\cdots$, $f_l(\partial)$ are $k_m$, $\cdots$, $k_l$. Let the degree of $f_m(\partial)$ be $j$. Suppose that $f_g(\partial)$, $\cdots$, $f_h(\partial) $ are the polynomials in those $f_i(\partial)$ whose degrees are $j-1$, and $k_g$, $\cdots$, $k_h$ are their leading coefficients respectively. For any $a\in V$,
 \begin{equation}
 \begin{aligned}
 &a_\lambda \beta=\sum\limits_{i=0}^{n}f_i(\partial+\lambda)(\partial(a_i\lhd a)+a\circ a_i+\lambda(a\star a_i)),\\
 &\beta_\lambda a=\sum\limits_{i=0}^{n}f_i(-\lambda)(\partial(a\lhd a_i)+a_i\circ a+\lambda(a_i\star a)).\label{lnds}
 \end{aligned}
 \end{equation}

By the coefficients of $\lambda^{j+1}$ in \eqref{lnds}, we get that $w_1=a\star t \in I$ and $w_2=t\star a\in I$ where $t=k_ma_m+\cdots+k_la_l$, for all $a\in V$. Let $U_1=\{t\}$ and $W=\{b|b\star a=a\star b=0 ~~\text{for all}~~a\in V\}$. Note that $t\neq 0$.

We claim that there exists some nonzero element $w\in V\cap I$. If either $w_1$ or $w_2$ is not zero, then we are done. If $a\star t=t\star a=0 $, i.e. $U_1\subseteq W$, then  by comparing the coefficients of $\lambda^{j}$ in \eqref{lnds}, we get that
$\partial(t\lhd a)+a\circ t+a\star c\in I$ and $\partial(a\lhd t)+t\circ a-c\star a \in I$ where $c=k_ga_g+\cdots+k_ha_h$, for all $a\in V$. Obviously, one of $t\lhd a$  and $a\lhd t$ is not zero since $(V,\lhd,\rhd)$ is simple.
Let $u=a\circ t+a\star c$ and $v=t\circ a-c\star a$. Then for any $b\in V$, we get
\begin{equation}\label{lnds2}
\begin{aligned}
b_\lambda (\partial (t\lhd a)+u)=&(\lambda+\partial)\big(\partial((t\lhd a)\lhd b)+b\circ (t\lhd a)+\lambda(b\star (t\lhd a))\big)\\
&+\partial(u\lhd b)+b\circ u+\lambda(b\star u),\\
b_\lambda (\partial (a\lhd t)+v)=&(\lambda+\partial)\big(\partial((a\lhd t)\lhd b)+b\circ (a\lhd t)+\lambda(b\star (a\lhd t))\big)\\
&+\partial(v\lhd b)+b\circ v+\lambda(b\star v),\\
(\partial (t\lhd a)+u)_\lambda b=&-\lambda\big(\partial(b\lhd (t\lhd a))+(t\lhd a)\circ b+\lambda( (t\lhd a)\star b)\big)\\
&+\partial(b\lhd u)+u\circ b+\lambda(u\star b),\\
(\partial (a\lhd t)+v)_\lambda b=&-\lambda\big(\partial(b\lhd (a\lhd t))+(a\lhd t)\circ b+\lambda( (a\lhd t)\star b)\big)\\
&+\partial(b\lhd v)+v\circ b+\lambda(v\star b).
\end{aligned}
\end{equation}
Thus, we get that $b\star (t\lhd a)$, $b\star (a\lhd t)$, $(t\lhd a)\star b$, $(a\lhd t)\star b \in I$  and
 $\partial((t\lhd a)\lhd b)+b\circ (t\lhd a)+b\star u $, $\partial((a\lhd t)\lhd b)+b\circ (a\lhd t)+b\star v$, $\partial(b\lhd (t\lhd a))+(t\lhd a)\circ b-u\star b$, $\partial(b\lhd (a\lhd t))+(a\lhd t)\circ b-v\star b\in I$ by comparing the coefficients of $\lambda^2$ and $\lambda$  in \eqref{lnds2} respectively. Then we get $b\star (t\rhd a) $, $b\star (a\rhd t)$, $(t\rhd a)\star b$, $(a\rhd t)\star b \in I$ by $a\star t=t\star a=0$. Let $U_2=\{t\rhd a,t\lhd a, a\rhd t, a\lhd t|~~\text{for all}~~a\in V\}$. If one of $b\star (t\lhd a)$, $b\star (a\lhd t)$, $(t\lhd a)\star b$, $(a\lhd t)\star b$, $b\star (t\rhd a) $, $b\star (a\rhd t)$, $(t\rhd a)\star b$ and $(a\rhd t)\star b$ is nonzero, we are done. Otherwise $U_2\subseteq W$. Then by repeating the above steps, we can obtain $U_3$, $U_4$, $\cdots$, $U_n$, $\cdots$. If there exists a nonzero element $w$ in some $U_n$ such that $w\star b\neq0$ or $b\star w\neq0$ for some $b\in V$, we are done. Otherwise all $U_i$ $\subseteq W$.
Therefore there is an ascending sequence
$$
U_1\subseteq U_1+U_2\subseteq U_1+U_2+U_3\subseteq\cdots\subseteq V.
$$
Since $(V, \lhd, \rhd)$ is simple, we get $W=V$. Therefore $a\lhd b=-b\rhd a $ for all $a,b\in V$. By Lemma \ref{simlemma}, it
is impossible. Consequently, the claim holds.

Suppose that $w$ is a nonzero element in $V\cap I$. Since for any $b\in V$,
\begin{equation*}
\begin{aligned}
&b_\lambda w=\partial(w\lhd b)+b\circ w+\lambda(b\star w),\\
&w_\lambda b=\partial(b\lhd w)+w\circ b+\lambda(w\star b),
\end{aligned}
\end{equation*}
we get that $\partial(w\lhd b)+b\circ w\in I$, $\partial(b\lhd w)+w\circ b \in I$,
$b\star w\in I$ and $w\star b\in I $ by the coefficients of $\lambda^0$ and $\lambda$ respectively. Therefore we obtain that
$\partial(b\rhd w)-b\circ w \in I$ and $\partial(w\rhd b)-w\circ b \in I$.
Let $U=\{w\lhd b,b\lhd w,b\rhd w,w\rhd b|~~\text{for all}~~ b\in V \}$.
We have $\partial u+v\in I$ for all $u\in U$, and some $v\in V$. By
\begin{align*}
b_\lambda (\partial u+v)=&(\lambda+\partial)(\partial(u\lhd b)+b\circ u+\lambda(b\star u))+\partial(v\lhd b)+b\circ v+\lambda(b\star v),\nonumber\\
(\partial u+v)_\lambda b=&-\lambda(\partial(b\lhd u)+u\circ b+\lambda(u\star b))+\partial(b\lhd v)+v\circ b+\lambda(v\star b),\nonumber
\end{align*}
we obtain that $b\star u \in I$, $u\star b\in I$,  $\partial(u\lhd b)+b\circ u+b\star v\in I$ and $\partial(b\lhd u)+u\circ b-v\star b \in I$
from the coefficients of $\lambda^2$ and $\lambda$ respectively. Then we get that $\partial(b\rhd u)-b\circ u-b\star v \in I$ and $\partial(u\rhd b)-u\circ b+v\star b \in I$. Therefore, by setting $H_1=\{u\lhd b,b\lhd u,b\rhd u,u\rhd b|~~\text{for all}~~ b\in V ,u\in U\}$  and proceeding inductively we have $H_2$, $\cdots$, $H_n$, $\cdots$ and an ascending sequence
$$
U\subseteq H_1+U\subseteq U+H_1+H_2\subseteq\cdots\subseteq V.
$$
Since $V$ is simple, we have $\partial v+c\in I$ for all $v\in V$ and some $c\in V$. Thus by
\begin{align*}
b_\lambda (\partial v+c)=&(\lambda+\partial)(\partial(v\lhd b)+b\circ v+\lambda(b\star v))+\partial(c\lhd b)+b\circ c+\lambda(b\star c),\nonumber
\end{align*}
we obtain $b\star v\in I$ from the coefficient of $\lambda^2$ for all $b,v\in V$.
Since $V\star V=V$ and $I$ is a $ \mathbb{C}\left[\partial\right] $-module, one can get $V\subset I$ and then $I=R$. Therefore, $R$ is simple.
\end{proof}

\begin{ex}
Let $R_c=\mathbb{C}[\partial]L$ be the left-symmetric conformal algebra given in Example \ref{rank one}. Note that the pre-Gel'fand-Dorfman Poisson algebra corresponding to  $(V=\mathbb{C}L, \lhd, \rhd, \circ)$  defined as follows:
\begin{equation}
L\lhd L=L,~~L\rhd L=0,~~L\circ L=cL.
\end{equation}
Obviously, $(V, \lhd, \rhd)$ is simple and $V=V\star V$. Therefore, by Theorem \ref{simpleth}, $R_c$ is simple for any $c\in \mathbb{C}$.
\end{ex}

In what follows, we investigate the simplicities of quadratic left-symmetric conformal algebras associated with pre-Gel'fand-Dorfman algebras $(V,\lhd,\rhd,\circ )$
with ``$\rhd$" trivial.

\begin{pro}\label{simcor}
Let $(V,\lhd,\rhd,\circ )$ be a simple pre-Gel'fand-Dorfman algebra with ``$\rhd$"  trivial.
If there exists an element $a\in V$ such that $a\lhd b\neq0$ or $b\lhd a\neq0 $ for all non-zero $b\in V$, then the quadratic left-symmetric conformal algebra $R=\mathbb{C}\left[\partial\right]V $ corresponding to $(V,\lhd,\rhd,\circ )$
is simple.

\end{pro}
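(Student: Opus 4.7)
The plan is to adapt the strategy of Theorem \ref{simpleth}, taking full advantage of the fact that $\rhd$ being trivial reduces the $\lambda$-product on $R$ to the clean form
\begin{equation*}
a_\lambda b = (\partial+\lambda)(b\lhd a) + a\circ b, \qquad a,b\in V.
\end{equation*}
The goal is to show that any nonzero ideal $I\subseteq R$ must satisfy $I=R$, and the whole argument splits into two stages: first extracting a nonzero element of $V\cap I$, then upgrading $V\cap I$ to all of $V$.

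For the first stage, I would take $\beta=\sum_{i=0}^{n}f_i(\partial)a_i\in I\setminus\{0\}$ with $\{a_i\}$ linearly independent in $V$, let $j=\max_i\deg f_i$, and set $t=\sum_{i:\deg f_i=j}k_i\,a_i$, where $k_i$ is the leading coefficient of $f_i$; note $t\neq 0$. Expanding
\begin{equation*}
a_\lambda\beta = \sum_i f_i(\partial+\lambda)\bigl[(\partial+\lambda)(a_i\lhd a)+a\circ a_i\bigr],\qquad
\beta_\lambda a = \sum_i f_i(-\lambda)\bigl[(\partial+\lambda)(a\lhd a_i)+a_i\circ a\bigr],
\end{equation*}
and reading off the coefficient of $\lambda^{j+1}$ (which lies in $I$ since $I$ is an ideal), one gets $t\lhd a\in I$ and $(-1)^j(a\lhd t)\in I$. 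By the hypothesis on the special element $a$, at least one of $t\lhd a$, $a\lhd t$ is nonzero, producing a nonzero $w\in V\cap I$.

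For the second stage, set $J:=V\cap I$ and show $J$ is an ideal of the pre-Gel'fand-Dorfman algebra $(V,\lhd,\rhd,\circ)$. For any $x\in J$ and $b\in V$, from
\begin{equation*}
b_\lambda x = (\partial+\lambda)(x\lhd b)+b\circ x\in I[\lambda], \qquad x_\lambda b=(\partial+\lambda)(b\lhd x)+x\circ b\in I[\lambda],
\end{equation*}
the coefficients of $\lambda$ give $x\lhd b,\,b\lhd x\in J$, and then the coefficients of $\lambda^0$, combined with the fact that $I$ is a $\mathbb{C}[\partial]$-submodule, give $b\circ x,\,x\circ b\in J$. Since $\rhd$ is trivial, $J$ is closed under both the pre-Novikov and the left-symmetric actions by $V$, so $J$ is an ideal of $(V,\lhd,\rhd,\circ)$. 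Simplicity of $V$ together with $w\in J\setminus\{0\}$ forces $J=V$, hence $V\subseteq I$, and finally $I=\mathbb{C}[\partial]V=R$.

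The main obstacle, as in Theorem \ref{simpleth}, is the first stage: guaranteeing that the highest-$\lambda$ coefficient extracted from $a_\lambda\beta$ or $\beta_\lambda a$ is nonzero. This is where the hypothesis on $a$ is tailored to do exactly the right job: with $\rhd$ trivial the extracted coefficients are purely $\lhd$-terms ($t\lhd a$ and $a\lhd t$), and the assumption that one of $a\lhd b$, $b\lhd a$ is nonzero for every nonzero $b$ is precisely what is needed to avoid the kind of iterative $V=V\star V$ argument used in Theorem \ref{simpleth}. Once $V\cap I\neq 0$, the remainder is a short bookkeeping check using simplicity.
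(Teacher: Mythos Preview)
Your proof is correct and follows essentially the same approach as the paper's. Both arguments extract a nonzero element of $V\cap I$ from the $\lambda^{j+1}$ coefficient of $a_\lambda\beta$ (resp.\ $\beta_\lambda a$), using the special element $a$ to guarantee that $t\lhd a$ or $a\lhd t$ is nonzero, and then use the coefficients of $\lambda$ and $\lambda^0$ to propagate through $\lhd$ and $\circ$. Your second stage is phrased a bit more cleanly---you observe directly that $J=V\cap I$ is an ideal of $(V,\lhd,\rhd,\circ)$ and invoke simplicity once, whereas the paper writes out an explicit iterative step---but this is a cosmetic difference, not a different argument.
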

\begin{proof}

With the assumption as the proof of Theorem \ref{simpleth},
for any $b\in V$, we have
 \begin{equation}
 \begin{aligned}
 &b_\lambda \beta=\sum\limits_{i=0}^{n}f_i(\partial+\lambda)(\partial(a_i\lhd b)+b\circ a_i+\lambda(a_i\lhd b)).
 \end{aligned}
 \end{equation}
Since there exists an element $a\in V$ such that $a\lhd b\neq0$ or $b\lhd a\neq0 $ for all non-zero $b\in V$, then there exists a nonzero element $a\lhd t$ or $t\lhd a\in V\cap I$ where $t=k_ma_m+\cdots+k_la_l\in V$. Denote this nonzero element by $w$.
For all $c\in V$,
\begin{equation*}
\begin{aligned}
&c_\lambda w=\partial(w\lhd c)+c\circ w+\lambda(w\lhd c),\\
&w_\lambda c=\partial(c\lhd w)+w\circ c+\lambda(c\lhd w).
\end{aligned}
\end{equation*}
Then one can get that $w\lhd c\in I$, $c\lhd w\in I$ by the coefficients of $\lambda$ and  $c\circ w\in I$, $w\circ c\in I$ by the coefficients of $\lambda^0$. Set $W=\{w\lhd c,c\lhd w,c\circ w,w\circ c| ~~\text{for all}~~a\in V \}$. Then $W\subseteq I$. For all
$d\in W$ and $b\in V$,
\begin{align*}
&b_\lambda d=\partial(d\lhd b)+b\circ d+\lambda(d\lhd b),\\
&d_\lambda b=\partial(b\lhd d)+d\circ b+\lambda(b\lhd d).
\end{align*}
Then we have $b\lhd d \in I$, $d\lhd b\in I$, $d\circ b\in I$ and $b\circ d\in I$. Therefore $V\subseteq I$ since $(V,\lhd,\circ)$ is simple. Consequently,  $R=\mathbb{C}\left[\partial\right]V=I$, i.e. $R$ is simple.
\end{proof}

\begin{rmk}
Note that there are some natural conditions to ensure that there exists an element $a\in V$ such that $a\lhd b\neq0$ or $b\lhd a\neq0 $ for all $b\in V$. For example,
 $(V,\lhd)$ has a right unit (resp. left unit) i.e. there exists an element $e\in V$ such that for all $a\in V$, $a\lhd e=a$ (resp. $e\lhd a=a$).
\end{rmk}

\begin{ex}\label{rank 2}
Let $(V=\mathbb{C}L\oplus \mathbb{C}W, \lhd, \rhd, \circ)$ be a two-dimensional pre-Gel'fand-Dorfman algebra with ``$\rhd$" trivial given as follows:
\begin{eqnarray*}
&&L\lhd L=L,\;\;W\lhd L=W,\;\; L\lhd W=W\lhd W=0,\\
&& L\circ L=0,\;\;L\circ W=W\circ L=h_1L,\;\;W\circ W=k_1(L+W),
\end{eqnarray*}
where $h_1,k_1\in\mathbb{C}\setminus\{0\}$. Note that $(V, \lhd, \rhd, \circ)$ is simple and $L$ is a right unit of $(V, \lhd)$.

Let $R=\mathbb{C}\left[\partial\right]V$ be the corresponding quadratic left-symmetric conformal algebra whose $\lambda$-products are given as follows:
\begin{align*}
&L_\lambda L=(\partial +\lambda)L,~~~~L_\lambda W=h_1L+(\partial +\lambda)W,\\
&W_\lambda L=h_1L,~~~~W_\lambda W=k_1L+k_1W,
\end{align*}
where $h_1,k_1\in\mathbb{C}\setminus\{0\}$. By Proposition \ref{simcor}, $R$ is simple.
\end{ex}

\begin{pro}\label{simcor1}
If a Novikov-Poisson algebra  $(V,\cdot,\circ )$ is simple,
 then the quadratic left-symmetric conformal algebra $R=\mathbb{C}\left[\partial\right]V $ corresponding to $(V,\cdot,\circ )$
is simple.
\end{pro}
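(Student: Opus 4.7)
The strategy is to show any non-zero ideal $I$ of $R=\mathbb{C}[\partial]V$ equals $R$. Here the $\lambda$-product, via Theorem~\ref{lspn} with $a\lhd b=a\cdot b$ and $a\rhd b=0$, simplifies to $a_\lambda b=(\partial+\lambda)(a\cdot b)+a\circ b$ for all $a,b\in V$. Setting $J:=V\cap I$, I would proceed in two steps: (i) show $J$ is a Novikov-Poisson ideal of $(V,\cdot,\circ)$; (ii) show $J\neq 0$. Then simplicity of $V$ forces $J=V$, so $V\subset I$ and hence $I=\mathbb{C}[\partial]V=R$. Since we work under the standing assumption of Section~4 that $R$ is not current, we have $V\cdot V\neq 0$.

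For (i), take $w\in J$ and $b\in V$. Then $b_\lambda w=(\partial+\lambda)(b\cdot w)+b\circ w$ lies in $I[\lambda]$: the coefficient of $\lambda$ gives $b\cdot w\in V\cap I=J$, and since $\partial(b\cdot w)\in I$ the coefficient of $\lambda^0$ yields $b\circ w\in J$. The analogous computation with $w_\lambda b$ gives $w\circ b\in J$, so $J$ is closed under both operations.

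For (ii), argue by contradiction: assume $J=0$, and pick $\beta\in I\setminus\{0\}$ of minimal $\partial$-degree $n$, written as $\beta=\sum_{k=0}^n\partial^k v_k$ with $v_n\neq 0$; then $n\geq 1$. Computing $b_\lambda\beta$ and $\beta_\lambda b$ for $b\in V$ and reading off the coefficients of $\lambda^{n+1}$ and $\lambda^n$, which are degree-$0$ elements of $V$ and hence must lie in $V\cap I=0$, yields $b\cdot v_n=0$, $b\circ v_n+b\cdot v_{n-1}=0$, and $v_n\circ b-b\cdot v_{n-1}=0$. Consequently $v_n\in N:=\{v\in V:V\cdot v=0\}$, and $b\circ v_n+v_n\circ b=0$ for all $b\in V$.

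Now set $W:=\mathbb{C}v_n+V\circ v_n$. From the compatibility $(x\cdot y)\circ z=x\cdot(y\circ z)$ and commutativity of $\cdot$ one derives $x\cdot(y\circ z)=y\cdot(x\circ z)$, and this combined with $v_n\in N$ shows $v_n\circ b\in N$ for all $b$. Together with the anti-symmetry $a\circ v_n=-v_n\circ a$, this gives $V\circ v_n\subset N$, so $W\subset N$ and $V\cdot W=0\subset W$. Closure of $W$ under $\circ$ on both sides is then verified using the Novikov axiom $(x\circ y)\circ z=(x\circ z)\circ y$, left-symmetry of $(V,\circ)$, and the anti-symmetry for $v_n$: every $a\circ(b\circ v_n)$ and $(a\circ v_n)\circ b$ reduces, via these identities, to an element of $\mathbb{C}v_n+V\circ v_n$. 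Thus $W$ is a non-zero Novikov-Poisson ideal of $V$; by simplicity $W=V$, but $W\subset N$ forces $V\cdot V=0$, contradicting $V\cdot V\neq 0$. The main obstacle is the simultaneous closure verification for $W$ under $\circ$ on both sides; this is non-trivial and requires careful use of the full Novikov-Poisson and Novikov identities together with the derived anti-symmetry for $v_n$.
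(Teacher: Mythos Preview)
Your proof is correct, and your framing via $J=V\cap I$ as a Novikov--Poisson ideal is clean. The closure of $W=\mathbb{C}v_n+V\circ v_n$ under $\circ$ on both sides does go through exactly as you indicate: $(b\circ v_n)\circ a=(b\circ a)\circ v_n\in V\circ v_n$ by the Novikov identity, and for the other side one combines left-symmetry with the anti-symmetry $x\circ v_n=-v_n\circ x$ to reduce $a\circ(b\circ v_n)$ to elements of $V\circ v_n$; the containment $W\subset N$ follows from $(c\cdot v_n)\circ b=c\cdot(v_n\circ b)$.

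The paper's proof takes a genuinely different endgame. Both arguments begin the same way: extract the leading coefficient $t$ ($=v_n$) of a nonzero $\beta\in I$, obtain $V\cdot t=0$ and then, from the next coefficient, the relation $b\circ t+t\circ b=0$. At this point the paper considers $W=t\circ V$, checks it is a Novikov--Poisson ideal, and splits into cases: if $W=0$ then $\mathbb{C}t$ is a proper ideal, a contradiction; if $W=V$ the paper descends one further step to the coefficient of $\lambda^{j-1}$ in $\beta_\lambda b$, produces elements $\partial(b\cdot c)+e\in I$ with $c$ the next coefficient, and from $(\partial(b\cdot c)+e)_\lambda d$ extracts $d\cdot(b\cdot c)\in V\cap I$, which is nonzero because $V\cdot c=t\circ V=V$. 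Your route short-circuits this extra descent: by enlarging $W$ to include $\mathbb{C}v_n$ and observing $W\subset N=\{v:V\cdot v=0\}$, simplicity forces $V=W\subset N$, i.e.\ $V\cdot V=0$, contradicting the standing non-current assumption. Your argument is shorter; the paper's has the mild advantage of producing an explicit nonzero element in $V\cap I$ rather than reaching a contradiction. Note also that your ``minimal $\partial$-degree'' hypothesis on $\beta$ is never actually used beyond ensuring $n\ge 1$; any $\beta$ with $v_n\neq 0$ would do.
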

\begin{proof}
With the assumption as the proof of Theorem \ref{simpleth} and by the coefficient of $\lambda^{j+1}$ in \eqref{lnds}, we get $w=b\cdot t\in I$ where $t=k_ma_m+\cdots+k_la_l$, for all $b\in V$.
We claim that there exists some nonzero element $w\in V\cap I$. If there exists some $b\in V$ such that $w=b\cdot t$ is not zero, then we are done. If $b\cdot t=0 $ for all $b\in V$, then we get that
$b\circ t+ c\cdot b\in I$ and $ t\circ b-b\cdot c\in I$ where $c=k_ga_g+\cdots+k_ha_h$ for any $b\in V$ by comparing coefficients of $\lambda^{j}$ in \eqref{lnds}.
If $b\circ t+ c\cdot b= t\circ b-b\cdot c=0$, we have
$$
b\circ t=- c\cdot b=-b\cdot c=-t\circ b
$$
for all $b\in V$. Let $W=t\circ V$. Note that for all $b$, $d\in V$,
\begin{align*}
&(t\circ b)\circ d=-(b\circ t)\circ d=-(b\circ d)\circ t\in W,\\
&d\circ(t\circ b)=(d\circ t)\circ b +t\circ (d\circ b)-(t\circ d)\circ b\in W,\\
&(t\circ b)\cdot d=(t\cdot d)\circ b=0=d\cdot (t\circ b).
\end{align*}
Therefore, $W$ is an ideal of $(V,\cdot,\circ)$. Thus $W=0$ or $W=V$. If $W=0$, then $\mathbb{C}t$ is an ideal of $(V,\cdot,\circ)$. If $\text{dim}V\geq 2$, it is impossible by the simplicity of $(V,\cdot,\circ)$.
When $\text{dim}V=1$, then $V=\mathbb{C}t$. Since $t\cdot t=t\circ t=0$, $V$ is trivial, which also contradicts with the simplicity of $(V,\cdot,\circ)$. Therefore, $W=V$.

On the other hand, by comparing the coefficient of $\lambda^{j-1}$ in the second equation of \eqref{lnds}, we obtain $\partial(b\cdot c)+e \in I$ for all $b\in V$ and some $e\in V$. Since for all $b$, $d\in V$,
\begin{align*}
(\partial (b\cdot c)+e)_\lambda d=&-\lambda(\partial(d\cdot (b\cdot c))+(b\cdot c)\circ d+\lambda(d\cdot(b\cdot c)))+\partial(d\cdot e)+e\circ d+\lambda(d\cdot e),
\end{align*}
we get that $d\cdot(b\cdot c)\in I$ by comparing the coefficient of $\lambda^{2}$. Since $V\cdot c=t\circ V=V$, there exist some $b,d\in V$ such that $d\cdot(b\cdot c)\neq 0$. Thus we get a nonzero element $w\in I\cap V$. For all $b\in V$,
\begin{align*}
&b_\lambda w=\partial(w\cdot b)+b\circ w+\lambda(w\cdot b),\\
&w_\lambda b=\partial(b\cdot w)+w\circ b+\lambda( b\cdot w).
\end{align*}
It is easy to find that $w\cdot b \in I$, $b\circ w\in I$ and $ w\circ b\in I$ for all $b\in V$ by comparing the coefficients of $\lambda$ and $\lambda^0$ respectively. Then $V\subseteq I$ due to that $(V, \cdot, \circ)$ is simple.  Since $I$ is a $ \mathbb{C}\left[\partial\right] $-module, one can get $I=R$. Thus $R$ is simple.
\end{proof}

\begin{cor}
Let $(V, \cdot, \circ)$ be a Novikov-Poisson algebra and $R=\mathbb{C}[\partial]V$ be the corresponding quadratic left-symmetric conformal algebra. Then $R=\mathbb{C}[\partial]V$ is simple if and only if $(V, \cdot, \circ)$ is simple.
\end{cor}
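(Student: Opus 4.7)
The plan is to reduce the corollary to Propositions \ref{simple1} and \ref{simcor1}, which together handle both implications once the correct identification of ideals on the two sides is made.

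First, recall that the proposition just before Example \ref{exp1} attaches to any left-symmetric Poisson algebra $(V,\cdot,\circ)$ (hence to any Novikov-Poisson algebra) a pre-Gel'fand-Dorfman algebra $(V,\lhd,\rhd,\circ)$ via $a\lhd b := a\cdot b$ and $a\rhd b := 0$. Feeding this into Theorem \ref{lspn}, the associated quadratic left-symmetric conformal algebra has $\lambda$-product
\begin{equation*}
a_\lambda b=\partial(b\lhd a)+a\circ b+\lambda(a\rhd b+b\lhd a)=\partial(a\cdot b)+a\circ b+\lambda(a\cdot b),
\end{equation*}
where the second equality uses commutativity of $\cdot$. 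This is exactly the quadratic left-symmetric conformal algebra $R=\mathbb{C}[\partial]V$ under consideration (cf.\ Corollary \ref{corleft}).

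The ``if'' direction is then immediate from Proposition \ref{simcor1}. For the ``only if'' direction, assume $R$ is simple. Proposition \ref{simple1} yields that $(V,\lhd,\rhd,\circ)$ is simple as a pre-Gel'fand-Dorfman algebra. The key bookkeeping step is to note that, under our choice $\rhd=0$ and $\lhd=\cdot$, a subspace $I\subseteq V$ is an ideal of the pre-Gel'fand-Dorfman algebra $(V,\lhd,\rhd,\circ)$ if and only if it is an ideal of the Novikov-Poisson algebra $(V,\cdot,\circ)$: the $\rhd$-conditions in the pre-Novikov ideal definition are vacuous, the conditions $a\lhd b,\, b\lhd a\in I$ for $a\in I,\, b\in V$ are precisely the ideal condition for the commutative associative algebra $(V,\cdot)$, and the $\circ$-condition is unchanged. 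Consequently, simplicity of the pre-Gel'fand-Dorfman algebra translates to simplicity of the Novikov-Poisson algebra.

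I do not anticipate any real obstacle here: once the definitions are unpacked, the corollary is a direct combination of Propositions \ref{simple1} and \ref{simcor1}, with no further conformal computation required.
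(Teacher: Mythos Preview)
Your proof is correct and follows essentially the same approach as the paper, which simply cites Propositions \ref{simple1} and \ref{simcor1}. You have made explicit the bookkeeping step (that under $\rhd=0$, $\lhd=\cdot$, pre-Gel'fand-Dorfman ideals coincide with Novikov-Poisson ideals) which the paper leaves implicit in the word ``straightforward''.
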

\begin{proof}
It is straightforward from Propositions \ref{simple1}  and \ref{simcor1}.
\end{proof}

Obviously, for a Novikov-Poisson algebra $(V, \cdot, \circ)$, if the Novikov algebra $(V, \circ)$ is simple, then $(V, \cdot, \circ)$ is simple. Note that the classification of Novikov-Poisson algebras on simple Novikov algebras with an idempotent element was given in \cite{X2}. Therefore, such classification result will provide many infinite simple left-symmetric conformal algebras. We present an example as follows.

\begin{ex}
Let $(V,\cdot,\circ )$ be the Novikov-Poisson algebra given in Example \ref{exp1}. Since $(V, \circ)$ is simple, $(V, \cdot, \circ)$ is simple.
Let $R=\mathbb{C}[\partial]V=\oplus_{i\in\mathbb{Z}}\mathbb{C}[\partial]x_i$ be the corresponding quadratic left-symmetric conformal algebra
given by
\begin{equation*}
{x_i}_\lambda x_j=(\partial +\lambda+j)x_{i+j},\;\;\text{for all}~~i,j\in\mathbb{Z}.
\end{equation*}
By Proposition \ref{simcor1}, $R$ is simple.
\end{ex}

\noindent {\bf  \small Acknowledgements}  \small The author would like to thank Professor Chengming Bai for fruitful discussions.

\noindent\noindent{\bf Declarations}

\noindent {\bf\small Ethical Approval} \small Not applicable.

\noindent{\bf \small Competing interests } \small  The authors declare that there is no conflict of interest.

\noindent{\bf \small Authors' contributions} \small Z. Xu  wrote Sections 3 and 4, Y. Hong wrote the introduction and Section 2. All authors reviewed the manuscript.

\noindent{\bf \small Funding} \small This research is supported by
NSFC (Nos. 12171129,
11871421) and the Zhejiang Provincial Natural Science
Foundation of China (No. LY20A010022) and the Scientific Research
Foundation of Hangzhou Normal University (No. 2019QDL012).

\noindent{\bf \small Availability of data and materials} \small Not applicable.

\end{document}